\def\ls{\lesssim}
\def\fz{\infty}
\def\r{\right}
\def\lf{\left}
\def\supp{{\mathop\mathrm{\,supp\,}}}
\def\rr{{\mathbb R}}
\def\rn{{{\rr}^n}}
\def\zz{{\mathbb Z}}
\def\nn{{\mathbb N}}
\def\cc{{\mathbb C}}
\newcommand{\wz}{\widetilde}
\def\cl{{\mathcal L}}
\def\az{\alpha}
\def\lz{\lambda}
\def\blz{\Lambda}
\def\bdz{\Delta}
\def\dz {\delta}
\def\bz{\beta}
\def\fai{\varphi}
\def\gz{{\gamma}}
\def\bgz{{\Gamma}}
\def\tz{\theta}
\def\sz{\sigma}
\def\wz{\widetilde}
\def\ls{\lesssim}
\def\pat{\partial}
\def\lp{{L^p(\rn)}}
\def\mol{{{\mathop\mathrm{mol}}}}
\def\hs{\hspace{0.3cm}}
\def\dsum{\displaystyle\sum}
\def\dint{\displaystyle\int}
\def\dfrac{\displaystyle\frac}
\def\dsup{\displaystyle\sup}
\newcommand{\R}{{\mathbb R}}
\newtheorem{theorem}{Theorem}[section]
\newtheorem{lemma}[theorem]{Lemma}
\newtheorem{corollary}[theorem]{Corollary}
\newtheorem{proposition}[theorem]{Proposition}
\theoremstyle{definition}
\newtheorem{remark}[theorem]{Remark}
\newtheorem{definition}[theorem]{Definition}
\numberwithin{equation}{section}
\def\supp{{\mathop\mathrm{\,supp\,}}}
\def\dist{{\mathop\mathrm{\,dist\,}}}
\numberwithin{equation}{section}
\begin{document}

\arraycolsep=1pt

\title{\bf\Large
Maximal Function Characterizations of Hardy Spaces Associated to Homogeneous
Higher Order Elliptic Operators
\footnotetext {\hspace{-0.35cm}
2010 {\it Mathematics Subject Classification}. Primary: 42B30;
Secondary: 42B20, 42B35, 42B37, 46E30, 35J48, 47B06, 47B38.
\endgraf {\it Key words and phrases}.
higher order elliptic operator, off-diagonal estimate, Hardy space,  maximal function,
square function, molecule, Riesz transform.
\endgraf Svitlana Mayboroda was partially supported by the NSF grants DMS 1220089
(CAREER), DMS 1344235 (INSPIRE), DMR 0212302 (UMN MRSEC Seed grant),
and the the Alfred P. Sloan Fellowship. Dachun Yang is supported by the National
Natural Science Foundation  of China (Grant Nos. 11171027 and
11361020) and the Specialized Research Fund for the Doctoral Program
of Higher Education of China (Grant No. 20120003110003) and the Fundamental Research Funds for Central
Universities of China (Grant Nos. 2013YB60 and 2014KJJCA10).}}
\author{Jun Cao, Svitlana Mayboroda and
Dachun Yang\footnote{Corresponding author}}
\date{ }
\maketitle

\vspace{-0.6cm}

\begin{center}
\begin{minipage}{13.5cm}
{\small {\bf Abstract} \quad Let $L$ be a homogeneous divergence form
higher order elliptic operator with complex bounded measurable coefficients
and $(p_-(L),\, p_+(L))$ be the maximal interval of exponents
$q\in[1,\,\infty]$ such that the semigroup $\{e^{-tL}\}_{t>0}$  is bounded on $L^q(\mathbb{R}^n)$.
In this article, the authors establish the non-tangential
maximal function characterizations of the associated Hardy spaces
$H_L^p(\mathbb{R}^n)$ for all $p\in(0,\,p_+(L))$,
which, when $p=1$, answers a question asked
by Deng et al. in [J. Funct. Anal. 263 (2012),
604-674]. Moreover, the authors characterize $H_L^p(\mathbb{R}^n)$ via various versions
of square functions and Lusin-area functions associated to the operator $L$.}
\end{minipage}
\end{center}

\vspace{0.2cm}

\section{Introduction\label{s1}}

\hskip\parindent Let $m\in\nn:=\{1,\,2,\,\ldots\}$ and
$L$ be a homogeneous higher order elliptic operator of the  form
\begin{eqnarray}\label{L homo}
L:=\dsum_{|\az|=m=|\bz|}(-1)^{m}
\pat^{\az}\lf(a_{\az,\,\bz}\pat^{\bz}\r),
\end{eqnarray}
where $\az:=(\az_1,\,\ldots,\,\az_n)$ and  $\bz:=(\bz_1,\,\ldots,\,\bz_n)$
belong to $(\zz_+)^n:=(\nn\cup \{0\})^n$, $|\az|:=\az_1+\cdots+\az_n$,
$|\bz|:=\bz_1+\cdots+\bz_n$ and
$\{a_{\az,\,\bz}\}_{|\az|=m=|\bz|}$
are bounded measurable functions mapping
$\rn$ into $\cc$ (see Subsection \ref{s2.1} below
for the exact definition of $L$ in \eqref{L homo}).
The aim of this article is to establish the maximal function characterizations
of the associated Hardy space $H_L^p(\rn)$ adapted to $L$,
which, when $p=1$, answers a question asked by Deng et al. in \cite{ddy12}.
It is now well known that such a Hardy space adapted to $L$, which has appeared in
\cite{ddy12,cy12}, is a good substitute of the
Lebesgue space $L^p(\rn)$, for smaller
$p$, when studying the regularity of the solution to the
corresponding elliptic equation (see, for example,
\cite{cks92,cks93,cds99,clms,dhmmy,mm04,ccyy,dy05-1,dxy,DL13,jy11,cmy}).

Notice that, if
$L\equiv -\bdz:=-\sum_{j=1}^n \frac{\pat^2}{\pat x_j^2}$
is the Laplace operator,
the Hardy space $H_{-\bdz}^p(\rn)$ is just the classical
Hardy space $H^p(\rn)$ which has been systematically studied by Fefferman and
Stein in their seminal paper \cite{fs72}. In the same paper,
Fefferman and Stein also established various real-variable
characterizations of  $H^p(\rn)$, including their non-tangential maximal function characterization
and Littlewood-Paley function characterizations.
Recall that, for all $f\in L^2(\rn)$ and $x\in\rn$, the \emph{non-tangential maximal
function} $\mathcal{N}_{\bdz}(f)(x)$ is defined by
\begin{eqnarray}\label{eqn CNmax}
\mathcal{N}_{\bdz}(f)(x):=\dsup_{(y,\,t)\in\bgz(x)}\lf|e^{-t\sqrt{\bdz}}(f)(y)\r|,
\end{eqnarray}
here and hereafter, for all $x\in\rn$,
\begin{eqnarray}\label{1.x1}
\bgz(x):=
\lf\{(y,\,t)\in\rr^{n+1}_+:\ |y-x|<t\r\}
\end{eqnarray}
denotes the \emph{cone with vertex} $x$ and
$\rr^{n+1}_+:=\rn\times(0,\,\fz)$.
Recall also that, if $n=1$, the nontangential maximal function characterization of
$H^p(\rn)$ was proved by Burkholder, Gundy and Silverstein \cite{bgs71}
more early,
which constitutes one of the motivations for Fefferman and Stein to study the
real-variable theory of $H^p(\rn)$.

Let $L\equiv-\rm{div}(A\nabla)$ be the second order elliptic operator, where
$\nabla:=(\frac{\pat}{\pat x_1},\,\ldots,\,\frac{\pat}{\pat x_n})$
and $A:=A(x)$ is an $n\times n$ matrix of complex bounded measurable coefficients
defined on $\rn$ and satisfies the \emph{ellipticity condition}
$$\lz |\xi|^2\le \Re e \lf(A\xi \cdot \overline{\xi}\r)\ \ \ \text{and}\ \ \
|A\xi \cdot \overline {\zeta}|\le \blz |\xi| |\zeta|$$
for all $\xi$, $\zeta\in\cc^n$ and for some positive constants  $0<\lz\le \blz<\fz$
independent of $\xi$ and $\zeta$. Hofmann and Mayboroda \cite{hm09} (for $p=1$),
and Jiang and Yang \cite{jy10} (for $p\in(0,\,1]$)
established the non-tangential maximal function characterization of the associated Hardy space
$H_L^p(\rn)$. Recall that, for all $f\in L^2(\rn)$ and $x\in\rn$, the \emph{non-tangential
maximal function} $\mathcal{N}_{L}(f)$ is defined by
\begin{eqnarray}\label{eqn EOmax}
\mathcal{N}_{L}(f)(x):=\dsup_{(y,\,t)\in\bgz(x)}
\lf\{\frac{1}{t^n} \dint_{B(y,\,t)} \lf|e^{-t^2L}(f)(z)\r|^2\,dz\r\}^{\frac{1}{2}},
\end{eqnarray}
here and hereafter, for all $(y,\,t)\in\rr_+^{n+1}$,
$B(y,\,t):=\{z\in\rn:\ |z-y|<t\}$. Observe that the non-tangential maximal function
\eqref{eqn EOmax} is a little bit different from \eqref{eqn CNmax}.
The main reason for adding an extra averaging in the spatial variable in
\eqref{eqn EOmax} is that  we need to compensate for the lack of pointwise
estimates of the heat semigroup (see \cite{hm09} for more details).

Now, let $L$ be a homogenous $2m$-th order elliptic operator
as in \eqref{L homo},
where $\{a_{\az,\,\bz}\}_{|\az|=m=|\bz|}$ are bounded measurable functions
mapping $\rn$ into $\cc$ satisfying the Ellipticity condition
$(\mathcal{E}_0)$ or the Strong ellipticity condition
$(\mathcal{E}_1)$ (see Subsection \ref{s2.1} for their definitions).
Some properties of Hardy spaces $H_L^p(\rn)$ associated with a
homogeneous higher order elliptic operator $L$ as in
\eqref{L homo}, for $p\in(0,\,1]$, have already been established in
 \cite{cy12,ddy12}.  To be precise,
let ${L}$ be the homogeneous higher order operator defined
as in \eqref{L homo} that satisfies the Ellipticity condition $(\mathcal{E}_0)$.
For all $f\in L^2(\rn)$ and $x\in\rn$,
the ${L}$-\emph{adapted square function} $S_{L}(f)$ is defined by
\begin{eqnarray}\label{4.1}
S_{L} (f)(x):=\lf\{\iint_{\Gamma(x)}\lf|t^{2m}{L}e^{-t^{2m}{L}}(f)(y)\r|^2
\frac{dy\,dt}{t^{n+1}}\r\}^{1/2}.
\end{eqnarray}

The following definition of Hardy spaces is motivated by
\cite{hm09,jy10,hmm}; see also \cite{ddy12,cy12} for the case
when $p\in(0,\,1]$.

\begin{definition}\label{Hardy space aoociated with L0 df}
Let $p\in(0,\,2]$, ${L}$ be as in \eqref{L homo} and satisfy the Ellipticity
condition $(\mathcal{E}_0)$. A function $f\in L^2(\rn)$ is said
to be in the \emph{space $\mathbb{H}_{L}^p(\rn)$} if $S_{L}(f)\in L^p(\rn)$; moreover,
let $\|f\|_{H_{L}^p(\rn)}:=\|S_{L}(f)\|_{L^p(\rn)}.$ The {\it Hardy
space} $H_{L}^p(\rn)$ is then defined as the completion of
$\mathbb{H}_{L}^p(\rn)$ with respect to the \emph{quasi-norm}
$\|\cdot\|_{H_{L}^p(\rn)}$.

For $p\in(2,\,\fz)$, the \emph{Hardy space} $H_{L}^p(\rn)$ is then defined as the
dual space of the Hardy space $H_{L^*}^{p'}(\rn)$, where $L^*$ denotes the \emph{adjoint
operator} of $L$ in $L^2(\rn)$ and $p':=\frac{p}{p-1}\in(1,\,2)$
denotes the \emph{conjugate exponent} of $p$.
\end{definition}

For the Hardy space $H_L^p(\rn)$ with $p\in(0,\,1]$,
the authors in \cite{cy12} established various characterizations of
$H_L^p(\rn)$ in terms of molecules, the
generalized square function or the Riesz transform.
Moreover, Deng et al. in \cite{ddy12} also established some other interesting
characterizations of these Hardy spaces
in the case of $p=1$. However, neither of the above articles gives the maximal
function characterizations of $H_L^p(\rn)$ even in the case of $p=1$ and it has been raised by
Deng et al. \cite{ddy12} as an open question
whether $H_L^1(\rn)$ has the maximal function characterizations or not.

Motivated by the above articles, the main purpose of this article
is to establish the maximal function characterizations
of the Hardy space $H_L^p(\rn)$ associated with $L$ as in \eqref{L homo}.
Based on \cite{hm09}, we first  introduce
the following versions of maximal functions associated with ${L}$.
For $\lz\in(0,\,\fz)$, $f\in L^2(\rn)$ and $x\in\rn$, the \emph{radial maximal
function}, $\mathcal{R}^\lz_{h,\,L}(f)$,
associated with the heat semigroup generated by ${L}$, is defined by
\begin{eqnarray}\label{radial maximal function for heat}
&&\mathcal{R}^\lz_{h,\,L}(f)(x):=\sup_{t\in(0,\fz)}\lf\{\frac{1}{(\lz t)^n}\dint_{B(x,\,\lz t)}
\lf|e^{-t^{2m}{L}}(f)(y)\r|^2\,dy\r\}^{\frac{1}{2}}.
\end{eqnarray}

Similarly, the \emph{non-tangential maximal
function}, $\mathcal{N}^\lz_{h,\,L}(f)$,
associated with the heat semigroup generated by ${L}$, is defined by
\begin{eqnarray}\label{non-tangential maximal function for heat}
&&\mathcal{N}^\lz_{h,\,L}(f)(x):=\sup_{(y,\,t)\in\Gamma^\lz(x)}
\lf\{\frac{1}{(\lz t)^n} \int_{B(y,\,\lz t)}\lf|e^{-t^{2m}{L}}(f)(z)\r|^2\,dz\r\}^{\frac{1}{2}},
\end{eqnarray}
where $\Gamma^\lz(x)$ for all $x\in\rn$ is defined by setting
\begin{eqnarray}\label{1.x2}
\Gamma^\lz(x):=
\lf\{(y,\,t)\in\rr^{n+1}_+:\ |y-x|<\lz t\r\}.
\end{eqnarray}
In what follows, when $\lz=1$, we remove the superscript
$\lz$ from $\mathcal{R}^\lz_{h,\,L}(f)$ and $\mathcal{N}^\lz_{h,\,L}(f)$
for simplicity. Observe also that, if $m=1$, then the maximal functions
defined in \eqref{radial maximal function for heat} and \eqref{non-tangential maximal function for heat}
coincide with those in \cite{hm09,jy10}.

\begin{definition}\label{maximal function Hardy space}
Let ${L}$ be as in \eqref{L homo} and satisfy the Ellipticity condition
$(\mathcal{E}_0)$. For all $p\in(0,\,\fz)$, the \emph{Hardy space}
$H_{\mathcal{N}_{h,\,L}}^p(\rn)$ is defined as the completion of
$\lf\{f\in L^2(\rn):\ \mathcal{N}_{h,\,L}(f)\in L^p(\rn)\r\}$
with respect to the \emph{quasi-norm} $$\lf\|f\r\|_{H_{\mathcal{N}_{h,\,L}}^p(\rn)}
:=\lf\|\mathcal{N}_{h,\,L}(f) \r\|_{L^p(\rn)}.$$

The \emph{Hardy space} $H_{\mathcal{R}_{h,\,L}}^p(\rn)$ is defined in the
way same as $H_{\mathcal{N}_{h,\,L}}^p(\rn)$ with $\mathcal{N}_{h,\,L}(f)$
in \eqref{non-tangential maximal function for heat} replaced
by $\mathcal{R}_{h,\,L}(f)$ in \eqref{radial maximal function for heat}.
\end{definition}

\begin{remark}\label{rem def MA}
By the argument that used in the proof of \cite[(6.50)]{hm09}
with a small modification, we know that, for all $p\in(0,\,\fz)$ and $f\in L^2(\rn)$,
$$\lf\|\mathcal{N}_{h,\,{L}}(f)\r\|_{L^p(\rn)}\sim
\lf\|\mathcal{R}_{h,\,{L}}(f)\r\|_{L^p(\rn)},$$
which implies that, for all $p\in(0,\,\fz)$,
$H_{\mathcal{R}_{h,\,L}}^p(\rn)=
H_{\mathcal{N}_{h,\,L}}^p(\rn)$ with equivalent quasi-norms.
\end{remark}

Now, let $(p_-(L),\, p_+(L))$ be the \emph{maximal interval} of exponents
$q\in[1,\,\fz]$ such that the family $\{e^{-tL}\}_{t>0}$
of operators  is bounded on $L^q(\rn)$. The following theorem gives out
the maximal function characterizations of $H_{L}^p(\rn)$ for all $p\in(0,\,p_+(L))$.

\begin{theorem}\label{non-tangential heat maximal functiona characterization}
Let ${L}$ be as in \eqref{L homo} and satisfy the Strong ellipticity condition
$(\mathcal{E}_1)$ (see Subsection \ref{s2.1} below for its definition).
Then, for all $p\in(0,\,p_+(L))$, $H_{L}^p(\rn)=
H_{\mathcal{N}_{h,\,L}}^p(\rn)=H_{\mathcal{R}_{h,\,L}}^p(\rn)$ with equivalent quasi-norms,
where $H_{\mathcal{N}_{h,\,L}}^p(\rn)$ and $H_{\mathcal{R}_{h,\,L}}^p(\rn)$
are defined as in Definition \ref{maximal function Hardy space}.
\end{theorem}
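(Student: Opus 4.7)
My plan is to first invoke Remark \ref{rem def MA} to reduce the theorem to proving $H_L^p(\rn) = H_{\mathcal{N}_{h,\,L}}^p(\rn)$ with equivalent quasi-norms for all $p \in (0, p_+(L))$, and since both spaces are defined as completions of subspaces of $L^2(\rn)$, it suffices to establish the two-sided estimate $\|S_L(f)\|_{L^p(\rn)} \sim \|\mathcal{N}_{h,\,L}(f)\|_{L^p(\rn)}$ for $f \in L^2(\rn) \cap H_L^p(\rn)$. I would split the argument according to the regimes $p \in (0, 2]$ and $p \in (2, p_+(L))$.

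For $p \in (0, 2]$, the direction $\|\mathcal{N}_{h,\,L}(f)\|_{L^p(\rn)} \ls \|S_L(f)\|_{L^p(\rn)}$ I would prove via the molecular decomposition of $H_L^p(\rn)$ established in \cite{cy12}: any $f \in H_L^p(\rn)$ admits a decomposition $f = \sum_j \lambda_j m_j$ into $(p, \varepsilon, M)$-molecules $m_j$ adapted to balls $B_j$, so it suffices to show $\|\mathcal{N}_{h,\,L}(m_j)\|_{L^p(\rn)} \ls 1$ uniformly in $j$. I would decompose $\rn$ into dyadic annuli $U_k(B_j)$ around $B_j$ and treat the near region ($k=0$) via the $L^2(\rn)$-boundedness of $\mathcal{N}_{h,\,L}$ (which follows from Davies--Gaffney off-diagonal estimates on $\{e^{-t^{2m}L}\}_{t>0}$ under the strong ellipticity $(\mathcal{E}_1)$); for a far annulus $U_k(B_j)$ I would further split the truncated cone $\Gamma(x)$ into small-$t$ and large-$t$ regions relative to $2^k r_{B_j}$ and apply $L^2$ off-diagonal decay together with the $M$ vanishing moments encoded in the molecule, choosing $\varepsilon$ and $M$ sufficiently large relative to $n/p$ so that the resulting geometric series is summable in $k$ and $j$.

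The reverse inequality $\|S_L(f)\|_{L^p(\rn)} \ls \|\mathcal{N}_{h,\,L}(f)\|_{L^p(\rn)}$ is the main obstacle. My intended route is a good-$\lambda$ inequality
\[
\lf|\lf\{x\in\rn:\, S_L(f)(x)>2\lambda,\ \mathcal{N}^{\lambda_0}_{h,\,L}(f)(x)\le\gamma\lambda\r\}\r|\le C\gamma^{\theta}\lf|\lf\{x\in\rn:\, S_L(f)(x)>\lambda\r\}\r|
\]
valid uniformly in $\lambda>0$ for some enlarged aperture $\lambda_0\ge 1$, some $\theta>0$, and all sufficiently small $\gamma$. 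Integrating against $p\lambda^{p-1}\,d\lambda$ and using that $\|S_L(f)\|_{L^p(\rn)}$ is \emph{a priori} finite on $L^2\cap H_L^p$ then yields the bound. The three key ingredients of this good-$\lambda$ estimate are a Whitney decomposition of $\{S_L(f)>\lambda\}$; a localization of the tent integral defining $S_L$ to cones rooted inside each Whitney cube $Q$, via cancellation of $t^{2m}Le^{-t^{2m}L}$ together with Davies--Gaffney off-diagonal bounds for $(t^{2m}L)^k e^{-t^{2m}L}$; and a local $L^2$-comparison of the resulting square function with the $L^2$-averaged non-tangential maximal function of $f$, relying on higher-order Caccioppoli inequalities available under $(\mathcal{E}_1)$. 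This is the higher-order analogue of the Hofmann--Mayboroda scheme \cite{hm09}, and the absence of pointwise kernel bounds for $e^{-t^{2m}L}$ is the principal technical difficulty, forcing every estimate through the $L^2$ averaging built into \eqref{non-tangential maximal function for heat}.

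For $p \in (2, p_+(L))$, I would argue by duality: $H_L^p(\rn)$ is defined as $(H_{L^*}^{p'}(\rn))^*$ with $p' \in (1, 2)$, and the preceding analysis applied to $L^*$ (whose semigroup inherits the same off-diagonal estimates) identifies $H_{L^*}^{p'}(\rn)$ with the corresponding non-tangential maximal function space. Combined with the $L^p$-boundedness of $\mathcal{N}_{h,\,L}$ for $p\in(2, p_+(L))$ (a consequence of the uniform $L^p$-boundedness of $\{e^{-tL}\}_{t>0}$ on that interval together with a Fefferman--Stein vector-valued maximal argument applied to the $L^2$-averages in \eqref{non-tangential maximal function for heat}) and the pointwise lower bound $|f(x)| \le \mathcal{N}_{h,\,L}(f)(x)$ a.e.\ (from the $L^p$-convergence $e^{-t^{2m}L}f \to f$ as $t\to 0^+$ and Lebesgue differentiation), one identifies $H_L^p(\rn) = H_{\mathcal{N}_{h,\,L}}^p(\rn) = L^p(\rn)$ on this range. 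A final appeal to Remark \ref{rem def MA} inserts $H_{\mathcal{R}_{h,\,L}}^p(\rn)$ into the chain of equalities and completes the proof.
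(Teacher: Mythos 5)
Your blueprint coincides with the paper's at the level of architecture: molecular decomposition for the inclusion $H_L^p(\rn)\subset H_{\mathcal{N}_{h,\,L}}^p(\rn)$ at small $p$, a sawtooth/good-$\lz$ scheme \`a la Hofmann--Mayboroda for the converse inclusion, the identification $H_L^p(\rn)=L^p(\rn)$ plus $L^p$-boundedness of the maximal operator for large $p$, and Remark \ref{rem def MA} to pass between $\mathcal{N}_{h,\,L}$ and $\mathcal{R}_{h,\,L}$. There is, however, a genuine gap in the range $p\in(1,\,2]$: the molecular representation of Theorem \ref{molecular characterization for Hardy space L0} exists only for $p\in(0,\,1]$, and the reduction "$\|\mathcal{N}_{h,\,L}(\sum_j\lz_jm_j)\|_{L^p}^p\le\sum_j|\lz_j|^p\|\mathcal{N}_{h,\,L}(m_j)\|_{L^p}^p$" is the $p$-triangle inequality, which fails for $p>1$. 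So your argument for $\|\mathcal{N}_{h,\,L}(f)\|_{L^p(\rn)}\ls\|S_L(f)\|_{L^p(\rn)}$ does not cover $(1,\,2]$. The paper closes this by proving that $\mathcal{R}_{h,\,L}$ is bounded on $L^q(\rn)$ for all $q\in(2,\,\fz)$ (via the $L^2$ off-diagonal estimates and $[\mathcal{M}(|f|^2)]^{1/2}$) and then complex-interpolating between the molecular case $p\le1$ and $q>2$, using Proposition \ref{pro interpolation Hardy L0} and Lemma \ref{lem relation Hardy Lebesgue L0}; you need this interpolation step or a substitute.

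For the hard inequality $\|S_L(f)\|_{L^p(\rn)}\ls\|\mathcal{N}_{h,\,L}^\gz(f)\|_{L^p(\rn)}$, your one-step good-$\lz$ plan conceals where the higher-order difficulty actually sits. The paper factors it as $\|S_L(f)\|_{L^p}\ls\|S_{h,\,L}(f)\|_{L^p}\ls\|\mathcal{N}_{h,\,L}^\gz(f)\|_{L^p}$ (Propositions \ref{domination of square functions} and \ref{domination of heat by maximal}), and each step needs tools your sketch does not supply: (a) the improved parabolic Caccioppoli inequality \eqref{caccioppoli's inequality3 with epsilon}, with \emph{no} intermediate gradients $\nabla^j u$, $1\le j\le m-1$, on the right-hand side, obtained by Barton's iteration --- the naive Caccioppoli inequality \eqref{caccioppoli's inequality2 with epsilon} is not enough; and (b) Lemma \ref{mlem 1}, because integrating by parts against $t^{2m}Le^{-t^{2m}L}f$ produces the self-referential bound $S_L(f)\le C_0[S_{h,\,L}^2(f)]^{1/2}[S_L^2(f)]^{1/2}$ with a strictly larger aperture on the right, which cannot be absorbed pointwise or by a change-of-angle constant alone. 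Your phrase "cancellation plus Davies--Gaffney plus Caccioppoli" does not explain how this reappearing square function is absorbed. Finally, a smaller point for $p\in[2,\,p_+(L))$: the a.e.\ bound $|f(x)|\le\mathcal{N}_{h,\,L}(f)(x)$ is delicate, since $L^p$-convergence of $e^{-t^{2m}L}f$ as $t\to0^+$ yields a.e.\ convergence only along subsequences; the paper instead proves the weak-type identity \eqref{cor5}, pairing $e^{-t^{2m}L}f$ against ball averages of a dual test function and letting $t\to0^+$, which is the safe route.
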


The proof of Theorem \ref{non-tangential heat maximal functiona characterization}
will be given in Section \ref{s3} of this article.

Before describing our method to
prove Theorem \ref{non-tangential heat maximal functiona characterization},
let us first recall some key points of the methods used
to establish the maximal function characterizations in \cite{fs72,hm09}.

For all $f\in L^2(\rn)$ and $x\in\rn$, let
\begin{eqnarray}\label{eqn Csquare}
S_{\bdz}(f)(x):=\lf\{\iint_{\bgz(x)}\lf|t\nabla e^{-t\sqrt{\bdz}}(f)(y)\r|^2\,
\frac{dy\,dt}{t^{n+1}}\r\}^{\frac{1}{2}}
\end{eqnarray}
be the \emph{Lusin-area function} of $f$ \emph{associated to} $\bdz$,
where $\bgz(x)$ for all $x\in\rn$ is as in \eqref{1.x1}. For convenience,
throughout the article, we distinguish in terminology the \emph{square function with gradient}
from the one without gradient via calling the former the \emph{Lusin-area function}.

Recall that Fefferman and Stein \cite{fs72} established the maximal function
characterizations of $H^p(\rn)$ by developing the equivalence of the $L^p(\rn)$
quasi-norms between $\mathcal{N}_{\bdz}(f)$ in \eqref{eqn CNmax} and
$S_{\bdz}(f)$ in \eqref{eqn Csquare}.
The heart of their proof is to control the integral
$\int_E \lf[S_{\bdz}(f)(x)\r]^2\,dx$
for some set $E$. By Fubini's theorem, this is reduced to the corresponding
estimates on a saw-tooth region $\mathcal{R}:=\cup_{x\in E}\bgz(x)$ based on $E$,
namely, we need to control
\begin{eqnarray}\label{1.1}
\iint_{\mathcal{R}}t\lf|\nabla e^{-t\sqrt{\bdz}}(f)(y)\r|^2\,dy\,dt.
\end{eqnarray}
The main tool that they used to estimate \eqref{1.1} is
Green's theorem. To this end, they first replaced the region $\mathcal{R}$
by an approximating family $\{\mathcal{R}_\epsilon\}_{\epsilon>0}$ of regions whose
boundaries have certain uniform smoothness, and then applied Green's theorem to
reduce the estimates on $\mathcal{R}_\epsilon$ to its boundary.
Finally,  they used some properties of harmonic functions to estimate
the corresponding integral on the boundary.

Hofmann and Mayboroda \cite{hm09} used the strategy similar to that of Fefferman
and Stein \cite{fs72}.
However, there do exist some differences between these two methods.
For all $f\in L^2(\rn)$ and $x\in\rn$, let
\begin{eqnarray}\label{eqn EOsquare}
S_{L} (f)(x):=\lf\{\iint_{\bgz(x)}
\lf|t^2Le^{-t^2L}(f)(y)\r|^2\,\frac{dy\,dt}
{t^{n+1}}\r\}^{\frac{1}{2}}
\end{eqnarray}
be the \emph{square function} of $f$ \emph{associated to} $L$, which was used
in \cite{hm09} to introduce the Hardy space $H_L^p(\rn)$ associated with
$L\equiv-\rm{div}(A\nabla)$.
Notice that this square function,  which is more convenient when introducing
$H_L^p(\rn)$, is different from the Lusin-area function \eqref{eqn Csquare}.
Then, to obtain the non-tangential maximal characterization
of $H_L^p(\rn)$, Hofmann and Mayboroda \cite{hm09} used a Caccioppoli's inequality to control
$S_{L} (f)$ by another, Lusin-type, area function defined in a way similar to
\eqref{eqn Csquare} with $e^{-t\sqrt{\bdz}}$ replaced by $e^{-t\sqrt{L}}$.
Furthermore, they used the truncated cone to approximate the cone in
\eqref{eqn EOsquare} before applying Fubini's theorem.
This reduces to estimating the following integral
\begin{eqnarray}\label{1.2}
\iint_{\mathcal{R}^{\az \epsilon,\,\az\epsilon,\,
\frac{1}{\az}}(E^*)}t\lf|\nabla e^{-t^2L}(f)(y)\r|^2\,dy\,dt,
\end{eqnarray}
where $\mathcal{R}^{\az \epsilon,\,\az\epsilon,\,
\frac{1}{\az}}(E^*)$ denotes a truncated saw-tooth region. Finally, in the estimate of
\eqref{1.2}, since $e^{-t^2L}(f)$ is no longer a harmonic function and hence
Green's theorem cannot be used directly,
Hofmann and Mayboroda \cite{hm09} made full use of the ellipticity condition
of the operator $-\rm{div}(A\nabla)$ and the divergence theorem to
reduce the corresponding estimates to the boundary of
$\mathcal{R}^{\az \epsilon,\,\az\epsilon,\,
\frac{1}{\az}}(E^*)$.

In the present article, to prove Theorem \ref{non-tangential heat maximal functiona characterization},
we first point out that the proof of the inclusion $$H_{L}^p(\rn)\subset
H_{\mathcal{N}_{h,\,L}}^p(\rn)$$ is relatively easy.
Indeed, for $p\in(0,\,1]$, by the molecular characterization of
$H_{L}^p(\rn)$ (see Theorem
\ref{molecular characterization for Hardy space L0} below),
we only need to consider the action of the non-tangential maximal function
$\mathcal{N}_{h,\,L}$ on each molecule of $H_{L}^p(\rn)$. For $p\in[2,\,p_+(L))$,
using the $L^2(\rn)$ off-diagonal estimates, we show that the radial maximal
function $\mathcal{R}_{h,\,L}$ is bounded on $L^p(\rn)$, which, together with
relations between $H_{L}^p(\rn)$ and $L^p(\rn)$
(see Lemma \ref{lem relation Hardy Lebesgue L0} below), the complex interpolation of
$H_L^p(\rn)$ (see Proposition \ref{pro interpolation Hardy L0} below)
and Remark \ref{rem def MA}, implies that, for all $p\in(1,\,p_+(L))$,
$\mathcal{N}_{h,\,L}$ is bounded from $H_L^p(\rn)$ to $L^p(\rn)$. This furnishes the proof
of the inclusion $H_{L}^p(\rn)\subset H_{\mathcal{N}_{h,\,L}}^p(\rn)$.

For the proof of the converse inclusion of Theorem
\ref{non-tangential heat maximal functiona characterization}, when $p\in(0,\,2]$,
we adapt the strategy of \cite{fs72,hm09}.  The higher order setting produces
new problems and requires new tools.
To be precise, let $S_{L}$ and
$S_{h,\,L}$ be, respectively, the square function and the Lusin-area function
as in \eqref{4.1} and
\eqref{gradient square function for heat}.
We obtain the converse inclusion by showing that, for all $p\in(0,\,2]$ and
$f\in L^2(\rn)$,
\begin{eqnarray}\label{eqn MainAM}
\| S_{L}(f)\|_{L^p(\rn)}\ls \|S_{h,\,L}(f)\|_{L^p(\rn)}
\ls \lf\|\mathcal{N}^\gz_{h,\,L}(f)\r\|_{L^p(\rn)},
\end{eqnarray}
where $\gz\in(0,\,\fz)$ and the implicit positive constants are independent of $f$.
More precisely, in the proof of the first inequality of \eqref{eqn MainAM}, we need a
new higher order  parabolic Caccioppoli's inequality
(see \eqref{caccioppoli's inequality3 with epsilon} below).
To obtain \eqref{caccioppoli's inequality3 with epsilon}, we first establish a
parabolic Caccioppoli's inequality with gradient terms on the right hand side of
the inequality (see \eqref{caccioppoli's inequality2 with epsilon} below). Then, by an induction
argument from Barton \cite{ba14}, we remove the gradient terms and obtain an improved
Caccioppoli's inequality in \eqref{caccioppoli's inequality3 with epsilon}.
Also, in the proof of the second inequality of
\eqref{eqn MainAM}, in order to avoid the estimates on the boundary
when applying the divergence theorem,
we use some special cut-off functions. In this argument, the
parabolic Caccioppoli's inequality \eqref{caccioppoli's inequality3 with epsilon} is also
needed. The case $p\in(2,\,p_+(L))$ of the first inequality
in \eqref{eqn MainAM} is obtained via duality; see Proposition \ref{cor HLP}
and Corollary \ref{pro domination of square functions K} below.

With the help of Theorem \ref{non-tangential heat maximal functiona characterization},
we point out that $H_L^p(\rn)$ can also be characterized by another kind of maximal functions.
To be precise, for $\lz\in(0,\,\fz)$, $f\in L^2(\rn)$ and $x\in\rn$, define the \emph{radial maximal
function}, $\wz{\mathcal{R}}^\lz_{h,\,L}(f)$,
associated with the heat semigroup generated by ${L}$, by setting
\begin{eqnarray}\label{radial maximal function for heat q}
&&\wz{\mathcal{R}}^\lz_{h,\,L}(f)(x):=\sup_{t\in(0,\fz)}\lf\{\frac{1}{(\lz t)^n}\dint_{B(x,\,\lz t)}
\dsum_{k=0}^{m-1}\lf|\lf(t\nabla\r)^ke^{-t^{2m}{L}}(f)(y)\r|^2\,dy\r\}^{\frac{1}{2}}.
\end{eqnarray}

Similarly, define the \emph{non-tangential maximal
function}, $\wz{\mathcal{N}}^\lz_{h,\,L}(f)$,
associated with the heat semigroup generated by ${L}$, by setting
\begin{eqnarray}\label{non-tangential maximal function for heat q}
&&\wz{\mathcal{N}}^\lz_{h,\,L}(f)(x):=\sup_{(y,\,t)\in\Gamma^\lz(x)}
\lf\{\frac{1}{(\lz t)^n} \int_{B(y,\,\lz t)}\dsum_{k=0}^{m-1}
\lf|\lf(t\nabla\r)^ke^{-t^{2m}{L}}(f)(z)\r|^2\,dz\r\}^{\frac{1}{2}},
\end{eqnarray}
where $\Gamma^\lz(x)$ for all $x\in\rn$ is defined as in \eqref{1.x2}.
In what follows, when $\lz=1$, we remove the superscript
$\lz$ from $\wz{\mathcal{R}}^\lz_{h,\,L}(f)$ and $\wz{\mathcal{N}}^\lz_{h,\,L}(f)$
for simplicity. Recall also that, if $m=1$, $\wz{\mathcal{R}}^\lz_{h,\,L}(f)$ and
$\wz{\mathcal{N}}^\lz_{h,\,L}(f)$ coincide, respectively, with ${\mathcal{R}}^\lz_{h,\,L}(f)$ and
${\mathcal{N}}^\lz_{h,\,L}(f)$ as in \eqref{radial maximal function for heat} and
\eqref{non-tangential maximal function for heat}.

The following theorem characterizes
$H_{L}^p(\rn)$ via maximal functions defined as in \eqref{radial maximal function for heat q}
and \eqref{non-tangential maximal function for heat q}.

\begin{theorem}\label{non-tangential heat maximal functiona characterization gradient}
Let ${L}$ be as in \eqref{L homo} and satisfy the Strong ellipticity condition
$(\mathcal{E}_1)$. Then, for all $p\in(0,\,p_+(L))$, $H_{L}^p(\rn)=
H_{\wz{\mathcal{N}}_{h,\,L}}^p(\rn)=H_{\wz{\mathcal{R}}_{h,\,L}}^p(\rn)$ with equivalent quasi-norms,
where $H_{\wz{\mathcal{N}}_{h,\,L}}^p(\rn)$ and $H_{\wz{\mathcal{R}}_{h,\,L}}^p(\rn)$ are
defined similarly as in Definition \ref{maximal function Hardy space} with $\mathcal{N}_{h,\,L}$
and $\mathcal{R}_{h,\,L}$ therein replaced, respectively, by $\wz{\mathcal{N}}_{h,\,L}$ and
$\wz{\mathcal{R}}_{h,\,L}$.
\end{theorem}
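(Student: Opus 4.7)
The pointwise bounds $\cn_{h,\,L}(f)\le\wz{\cn}_{h,\,L}(f)$ and $\ccr_{h,\,L}(f)\le\wz{\ccr}_{h,\,L}(f)$, together with Theorem \ref{non-tangential heat maximal functiona characterization}, immediately give the inclusions $H^p_{\wz{\cn}_{h,\,L}}(\rn)\subset H^p_L(\rn)$ and $H^p_{\wz{\ccr}_{h,\,L}}(\rn)\subset H^p_L(\rn)$. The substance of the theorem is therefore the reverse inclusions and the equivalence $H^p_{\wz{\cn}_{h,\,L}}(\rn)=H^p_{\wz{\ccr}_{h,\,L}}(\rn)$.

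My plan is to establish the pointwise dominance
\begin{equation*}
\wz{\cn}_{h,\,L}(f)(x)\ls\cn^\mu_{h,\,L}(f)(x),\qquad x\in\rn,
\end{equation*}
for some $\mu\in[1,\fz)$ depending only on $n$, $m$ and the ellipticity constants. Granting this, the change-of-aperture equivalence $\|\cn^\mu_{h,\,L}(f)\|_{L^p(\rn)}\sim\|\cn_{h,\,L}(f)\|_{L^p(\rn)}$ — a standard tent-space-type fact for this $L^2$-averaged non-tangential maximal function, provable by a covering/good-$\lz$ argument — combined with Theorem \ref{non-tangential heat maximal functiona characterization}, yields $\|\wz{\cn}_{h,\,L}(f)\|_{L^p(\rn)}\ls\|f\|_{H^p_L(\rn)}$, which is the desired inclusion $H^p_L(\rn)\subset H^p_{\wz{\cn}_{h,\,L}}(\rn)$.

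To prove the dominance, fix $(y,t)\in\bgz(x)$ and $k\in\{1,\ldots,m-1\}$, and apply the parabolic Caccioppoli inequality \eqref{caccioppoli's inequality3 with epsilon} to $u(z,s):=e^{-s^{2m}L}(f)(z)$ on parabolic cylinders of spatial radius $\sim t$ and time length $\sim t$ about $(y,t)$; this yields
\begin{equation*}
\iint_{B(y,t)\times I_t}|(s\nabla)^k u(z,s)|^2\,\frac{dz\,ds}{s}\ls\iint_{B(y,Ct)\times I'_t}|u(z,s)|^2\,\frac{dz\,ds}{s},
\end{equation*}
for a constant $C>1$ and time intervals $I_t\subset I'_t$ of length $\sim t$ containing $t$. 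The smoothness of $u$ in $s$ — quantified by $\pat_s u=-2m\,s^{2m-1}Lu$ together with uniform $L^2$-boundedness of the family $\{(sL)^j e^{-s^{2m}L}\}_{s>0}$ — allows one to upgrade this integrated estimate to the pointwise-in-$t$ bound
\begin{equation*}
\frac{1}{t^n}\int_{B(y,t)}|(t\nabla)^k e^{-t^{2m}L}(f)(z)|^2\,dz\ls\sup_{s\sim t}\frac{1}{(Ct)^n}\int_{B(y,Ct)}|e^{-s^{2m}L}(f)(z)|^2\,dz.
\end{equation*}
Since $|y-x|<t\ls s$ for $s\sim t$, we have $(y,s)\in\bgz^\mu(x)$ and $B(y,Ct)\subset B(y,\mu s)$ once $\mu$ is chosen large enough (depending on $C$); the right-hand side is then majorized by $[\cn^\mu_{h,\,L}(f)(x)]^2$. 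Taking the supremum over $(y,t)\in\bgz(x)$ and summing over $k\in\{0,\ldots,m-1\}$ yields the claimed dominance.

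The equivalence $\|\wz{\cn}_{h,\,L}(f)\|_{L^p(\rn)}\sim\|\wz{\ccr}_{h,\,L}(f)\|_{L^p(\rn)}$, and hence the remaining inclusion $H^p_L(\rn)\subset H^p_{\wz{\ccr}_{h,\,L}}(\rn)$, follows by adapting the argument of \cite[(6.50)]{hm09} recalled in Remark \ref{rem def MA}: that argument rests only on the $L^2$-averaged structure of the maximal function, so it extends uniformly to the additional gradient terms $(t\nabla)^k$, $1\le k\le m-1$, appearing in \eqref{radial maximal function for heat q} and \eqref{non-tangential maximal function for heat q}. The principal obstacle is the parabolic Caccioppoli step: one must verify the correct higher-order scaling $t\leftrightarrow r^{2m}$, jointly control all gradients $(t\nabla)^k$ with $1\le k\le m-1$, and justify the passage from the integrated estimate to a pointwise-in-$t$ bound. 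For $p\in[2,p_+(L))$, where $H^p_L(\rn)$ is defined by duality, the argument is routed through the square function / maximal function equivalences already established in the proof of Theorem \ref{non-tangential heat maximal functiona characterization}.
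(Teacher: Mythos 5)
Your reduction to the inclusion $H^p_L(\rn)\subset H^p_{\wz{\cn}_{h,\,L}}(\rn)$ and the treatment of the easy direction match the paper, but your central step has a genuine gap: the passage from the space--time integrated Caccioppoli estimate to the fixed-time-slice bound
\begin{equation*}
\frac{1}{t^n}\int_{B(y,\,t)}\lf|(t\nabla)^ke^{-t^{2m}L}(f)(z)\r|^2\,dz\ls\sup_{s\sim t}\frac{1}{(Ct)^n}\int_{B(y,\,Ct)}\lf|e^{-s^{2m}L}(f)(z)\r|^2\,dz .
\end{equation*}
The justification you offer (``smoothness of $u$ in $s$'' plus uniform $L^2(\rn)$-boundedness of $\{(sL)^je^{-s^{2m}L}\}_{s>0}$) does not localize: setting $g(s):=\int_{B(y,\,t)}|\nabla^ku(z,\,s)|^2\,dz$ and writing $g(t)\le\frac{1}{|I_t|}\int_{I_t}g+\int_{I_t}|g'|$, the derivative $g'(s)$ pairs $\nabla^k\pat_su=-2ms^{2m-1}\nabla^kLu$ against $\nabla^ku$ on $B(y,\,t)$, i.e.\ it involves up to $2m+k$ spatial derivatives of $u$ on a ball, which neither Proposition \ref{caccioppoli's inequality} nor Proposition \ref{caccioppoli's inequality2} controls (both are purely space--time integrated and stop at $m$ derivatives). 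Making the slice bound rigorous requires a second (local) energy estimate obtained by testing the parabolic equation against $\pat_su$ -- an ``improved Caccioppoli'' in the spirit of Auscher--Qafsaoui/Barton that is not in the paper and that you do not supply; you flag this yourself as ``the principal obstacle'' but leave it unresolved. The alternative route to the slice, via the $L^2$ off-diagonal estimates for $\{(\tau^{1/(2m)}\nabla)^ke^{-\tau L}\}_{\tau>0}$ from Proposition \ref{pro Lp semiL0}(iv) applied to $(t\nabla)^ke^{-t^{2m}L}=(t\nabla)^ke^{-t^{2m}L/2}\circ e^{-t^{2m}L/2}$ with an annular decomposition, does work but yields $\wz{\cn}_{h,\,L}(f)(x)\ls\sum_{j}2^{jn/2}e^{-c\,4^{jm/(2m-1)}}\cn^{C2^j}_{h,\,L}(f)(x)$, a sum over all dyadic apertures rather than the single-aperture pointwise domination you claim; this still closes in $L^p$ after change of aperture, but it is no longer a pointwise inequality.

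For comparison, the paper avoids any pointwise comparison of the two maximal functions: it proves $H^p_L(\rn)\subset H^p_{\wz{\ccr}_{h,\,L}}(\rn)$ by rerunning the proof of Theorem \ref{non-tangential heat maximal functiona characterization} -- action on $(p,2,M,\epsilon)_L$-molecules for $p\in(0,1]$, $L^q$-boundedness of the radial maximal function for $q\in(2,\fz)$ via off-diagonal estimates, then interpolation -- with the off-diagonal estimates of $e^{-t^{2m}L}$ replaced by those of the gradient semigroup. If you either prove the second energy estimate or settle for the dyadic-aperture sum combined with change of aperture in $L^p$, your route becomes viable and has the merit of treating all $p\in(0,\,p_+(L))$ uniformly through Theorem \ref{non-tangential heat maximal functiona characterization}; as written, however, the decisive estimate is asserted rather than proved.
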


The proof of Theorem \ref{non-tangential heat maximal functiona characterization gradient}
will be given in Section \ref{s3} of this article.

Now we characterize $H_{L}^p(\rn)$ by using the non-tangential maximal function
with only the $(m-1)$-order gradients of the heat semigroup generated by $L$.

Let $\psi\in C_c^\fz (B(0,\,2))$ satisfy $0\le\psi\le 1$,
$\psi\equiv1$ on $B(0,\,1)$ and,
for all $k\in\{0,\,\ldots,\,m\}$,
$$\lf\|\nabla^k\psi\r\|_{L^\fz(\rn)}\ls 1.$$
For all $(x,\,t)\in\rr^{n+1}_+$ and $y\in\rn$,
let
\begin{eqnarray}\label{eqn COF}
\psi_{x,\,t}(y):=\frac{1}{t^n}\psi\lf(\frac{y-x}{t}\r).
\end{eqnarray}
Then $\psi_{x,\,t}\in C_c^\fz (B(x,\,2t))$ and $0\le\psi_{x,\,t}\le 1$,
$\psi_{x,\,t}\equiv1$ on $B(x,\,t)$ and,
for all $k\in\{0,\,\ldots,\,m\}$,
$$\lf\|\nabla^k\psi_{x,\,t}\r\|_{L^\fz(\rn)}\ls t^{-k}.$$

Having fixed any $\psi$ as above, for any $f\in L^2(\rn)$, $x\in\rn$
and $\lz\in(0,\,\fz)$, we introduce the following version of the
\emph{non-tangential maximal function}, $\mathcal{N}^\lz_{h,\,\psi,\,L}(f)$, associated with the
heat semigroup generated by $L$, by setting
\begin{eqnarray*}
&&\mathcal{N}^\lz_{h,\,\psi,\,L}(f)(x):=\sup_{(y,\,t)\in\Gamma^\lz(x)}
\lf\{\frac{1}{(\lz t)^n} \int_{B(y,\, \lz t)}
\lf|(t\nabla)^{m-1}\lf(\psi_{x,\,t}
e^{-t^{2m} L}(f)\r)(z)\r|^2\,dz\r\}^{\frac{1}{2}}.
\end{eqnarray*}
When $\lz=1$, we remove the superscript $\lz$
from $\mathcal{N}^\lz_{h,\,\psi,\,L}(f)$ for simplicity.

\begin{proposition}\label{pro NTMFC2 L0}
Let $L$ be as in \eqref{L homo}
and satisfy the Strong ellipticity condition $(\mathcal{E}_1)$, and
let $\psi$ be a cut-off function defined as in \eqref{eqn COF}.
For any $p\in(0,\,p_+(L))$, denote by $H_{\mathcal{N}_{h,\,\psi,\,L}}^p(\rn)$
the Hardy space defined as $H_{\mathcal{N}_{h,\,L}}^p(\rn)$ with
$\mathcal{N}_{h,\,L}$ replaced by $\mathcal{N}_{h,\,\psi,\,L}$.
Then $H_{L}^p(\rn)= H_{\mathcal{N}_{h,\,\psi,\,L}}^p(\rn)$
with equivalent quasi-norms.
In particular, different choices of $\psi$ in the definition of
$H_{\mathcal{N}_{h,\,\psi,\,L}}^p(\rn)$ above yield equivalent
quasi-norms.
\end{proposition}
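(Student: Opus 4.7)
The strategy is to establish the equivalence by sandwiching $\mathcal{N}_{h,\,\psi,\,L}$ between the maximal operators of Theorems \ref{non-tangential heat maximal functiona characterization} and \ref{non-tangential heat maximal functiona characterization gradient} on the one hand, and by adapting the cut-off machinery already present in the proof of Theorem \ref{non-tangential heat maximal functiona characterization} on the other hand.

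For the inclusion $H_L^p(\rn)\subset H_{\mathcal{N}_{h,\,\psi,\,L}}^p(\rn)$, I would apply the Leibniz rule to the multi-index expansion of $(t\nabla)^{m-1}(\psi_{x,\,t}\, e^{-t^{2m}L}(f))$, which distributes the $m-1$ derivatives between $\psi_{x,\,t}$ and $e^{-t^{2m}L}(f)$. The size estimate $\lf\|\nabla^k \psi_{x,\,t}\r\|_{L^\fz(\rn)}\ls t^{-k}$ renders every factor $(t\nabla)^k\psi_{x,\,t}$ uniformly bounded, and the support condition $\supp\psi_{x,\,t}\subset B(x,\,2t)$ is innocuous because for $(y,\,t)\in\Gamma(x)$ one has $B(y,\,t)\subset B(x,\,2t)$. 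Hence pointwise $\mathcal{N}_{h,\,\psi,\,L}(f)(x)\ls \wz{\mathcal{N}}_{h,\,L}(f)(x)$, whereupon Theorem \ref{non-tangential heat maximal functiona characterization gradient} yields this bound.

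For the converse, $\|f\|_{H_L^p(\rn)}\ls\|\mathcal{N}_{h,\,\psi,\,L}(f)\|_{L^p(\rn)}$, I would revisit the proof of the second inequality of \eqref{eqn MainAM} in Theorem \ref{non-tangential heat maximal functiona characterization}. The key observation is that the ``special cut-off functions'' invoked there to suppress boundary terms under the divergence theorem on a saw-tooth region appear precisely as multipliers of $e^{-t^{2m}L}(f)$, supported at height $t$ in a ball of radius comparable to $t$ centered on a point of the base of the saw-tooth; they can therefore be taken to coincide with $\psi_{x,\,t}$ up to a harmless dilation and an aperture adjustment. When the $m$ derivatives associated with $L$ are then distributed by Leibniz between $e^{-t^{2m}L}(f)$ and $\psi_{x,\,t}$, the terms carrying more than $m-1$ derivatives of $e^{-t^{2m}L}(f)$ are absorbed, at the cost of enlarging the aperture $\gz$, via the higher-order parabolic Caccioppoli inequality \eqref{caccioppoli's inequality3 with epsilon}, while the remaining terms are exactly what $\mathcal{N}_{h,\,\psi,\,L}(f)$ controls. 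This upgrades the existing proof to
$$\|S_{h,\,L}(f)\|_{L^p(\rn)}\ls \lf\|\mathcal{N}^{\gz}_{h,\,\psi,\,L}(f)\r\|_{L^p(\rn)}$$
for a suitable $\gz\in(0,\,\fz)$, and the square-function definition of $H_L^p(\rn)$ together with the first inequality of \eqref{eqn MainAM} completes the proof.

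Aperture independence of $\|\mathcal{N}^\lz_{h,\,\psi,\,L}(f)\|_{L^p(\rn)}$ follows from the tent-space framework underlying the proof of Theorem \ref{non-tangential heat maximal functiona characterization}, and independence of the choice of $\psi$ follows by writing the difference $\psi_{x,\,t}-\psi'_{x,\,t}$ of two admissible cut-offs (which vanishes on $B(x,\,t)$) and repeating the Leibniz argument of the first step. The main obstacle is the accounting in the integration-by-parts step: one must verify that, upon applying the divergence theorem $m$ times against $L$, no term involving more than $m-1$ derivatives of $e^{-t^{2m}L}(f)$ survives that cannot be absorbed via Caccioppoli, and that the resulting dilation and aperture parameter $\gz$ can be kept finite. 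Matching the cut-off scale with the parabolic Caccioppoli enlargement is exactly where the specific form of $\psi_{x,\,t}$, together with its uniform size estimates, is used.
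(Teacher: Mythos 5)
Your first direction is fine and coincides with the paper's argument: Leibniz's rule plus $\|\nabla^k\psi_{x,\,t}\|_{L^\fz(\rn)}\ls t^{-k}$ gives the pointwise bound $\mathcal{N}^2_{h,\,\psi,\,L}(f)\ls \wz{\mathcal{N}}^{2}_{h,\,L}(f)$, and Theorem \ref{non-tangential heat maximal functiona characterization gradient} finishes that inclusion.

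The converse direction, however, has a genuine gap. The quantity $\mathcal{N}_{h,\,\psi,\,L}(f)$ only records the \emph{top-order} $(m-1)$-th gradient of the localized function $\psi_{x,\,t}e^{-t^{2m}L}(f)$. Every route you describe — redoing the divergence-theorem argument of Proposition \ref{domination of heat by maximal} with the cut-offs ``matched'' to $\psi_{x,\,t}$ — ultimately produces terms such as $\frac{1}{t^n}\int_{B(y,\,\gz t)}|e^{-t^{2m}L}(f)|^2$ (this is exactly what $\mathrm{J}_0$ and the boundary-strip estimates \eqref{estimates on B epsilon}--\eqref{estimates on B plus} reduce to), i.e., zeroth- or lower-order information about $u=e^{-t^{2m}L}(f)$ itself. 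Controlling a Leibniz \emph{sum} $(t\nabla)^{m-1}(\psi_{x,\,t}u)=\sum_j c_j\,(t\nabla)^{j}u\,(t\nabla)^{m-1-j}\psi_{x,\,t}$ does not control its individual terms, so your claim that ``the remaining terms are exactly what $\mathcal{N}_{h,\,\psi,\,L}(f)$ controls'' does not follow; one needs a mechanism to go \emph{backwards}, bounding $\int_{B(x,\,t)}|u|^2$ by $\int_{B(x,\,2t)}|(t\nabla)^{m-1}(\psi_{x,\,t}u)|^2$. The missing ingredient is the higher-order Poincar\'e inequality for $v\in W_0^{m-1,\,2}(B(x,\,2t))$ (here the compact support of $\psi_{x,\,t}u$ is essential), which yields
\begin{eqnarray*}
\lf\{\frac{1}{t^n}\dint_{B(x,\,t)}\lf|u(z)\r|^2dz\r\}^{1/2}
\ls \lf\{\frac{1}{t^n}\dint_{B(x,\,2t)}\lf|(t\nabla)^{m-1}\lf(\psi_{x,\,t}u\r)(z)\r|^2dz\r\}^{1/2},
\end{eqnarray*}
hence $\mathcal{N}_{h,\,L}(f)\ls \mathcal{N}^2_{h,\,\psi,\,L}(f)$ pointwise. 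With this in hand the paper's proof is far simpler than what you propose: there is no need to re-enter the saw-tooth/Caccioppoli machinery at all, since the pointwise sandwich $\mathcal{N}_{h,\,L}(f)\ls \mathcal{N}^2_{h,\,\psi,\,L}(f)\ls \wz{\mathcal{N}}^{2}_{h,\,L}(f)$ combined with Theorems \ref{non-tangential heat maximal functiona characterization} and \ref{non-tangential heat maximal functiona characterization gradient} (and the standard change-of-aperture for these maximal functions) immediately yields both inclusions and the independence of the choice of $\psi$.
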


The proof of Proposition \ref{pro NTMFC2 L0} will be given in Section
\ref{s3} of this article, where a higher Poincar\'e's inequality from \cite{Mo08}
is used.

By the method used in the proof of Theorem \ref{non-tangential heat maximal functiona characterization}, we are
able to characterize $H_L^p(\rn)$ via some more general square functions
and Lusin-area functions.

To be precise, for all $\lz\in(0,\,\fz)$,  $k\in\zz_+$ and $f\in L^2(\rn)$,
the ${L}$-\emph{adapted square function} $S^\lz_{L,\,k}(f)$
is defined by setting, for all $x\in\rn$,
\begin{eqnarray}\label{eqn KSF}
S_{L,\,k}^\lz (f)(x):=\lf\{\iint_{\Gamma^\lz(x)}\lf|\lf(t^{2m}{L}\r)^ke^{-t^{2m}{L}}(f)(y)\r|^2
\frac{dy\,dt}{t^{n+1}}\r\}^{1/2}
\end{eqnarray}
and the \emph{Lusin-area function} $S^\lz_{h,\,L,\,k}(f)$
by setting, for all $x\in\rn$,
\begin{eqnarray}\label{gradient square function for heat}
S^\lz_{h,\,L,\,k} (f)(x):=\lf\{\iint_{\Gamma^\lz(x)}\lf|\lf(t\nabla\r)^m
\lf(t^{2m}{L}\r)^ke^{-t^{2m}{L}}(f)(y)\r|^2
\frac{dy\,dt}{t^{n+1}}\r\}^{1/2},
\end{eqnarray}
where $\Gamma^\lz$ is as in \eqref{1.x2}.
For simplicity, if $k=1$, we remove the subscript
$k$ from $S_{L,\,k}^\lz (f)$ and,  if $k=0$, we
remove the subscript $k$ from
$S^\lz_{h,\,L,\,k} (f)$. Also, if $\lz=1$, we
remove the superscript $\lz$ from both
$S_{L,\,k}^\lz (f)$ and $S^\lz_{h,\,L,\,k} (f)$.

\begin{definition}\label{def SFLAF Hardy}
Let ${L}$ be as in \eqref{L homo} and satisfy the Ellipticity condition
$(\mathcal{E}_0)$. For all
$k\in\nn$ and $p\in(0,\,\fz)$, the \emph{Hardy space}
$H_{S_{L,\,k}}^p(\rn)$ is defined as the completion of
$$\lf\{f\in {L^2(\rn)}:\ S_{L,\,k}(f)\in L^p(\rn)\r\}$$
with respect to the \emph{quasi-norm} 
$$\lf\|f\r\|_{H_{S_{L,\,k}}^p(\rn)}
:=\lf\|S_{L,\,k}(f) \r\|_{L^p(\rn)}.$$

Moreover, for all $k\in\zz_+$ and $p\in(0,\,\fz)$,
the \emph{Hardy space} $H_{S_{h,\,L,\,k}}^p(\rn)$ is defined in the
way same as $H_{S_{L,\,k}}^p(\rn)$ with $S_{L,\,k}(f)$
in \eqref{eqn KSF} replaced
by $S_{h,\,L,\,k}(f)$ in \eqref{gradient square function for heat}.
\end{definition}

The following theorem establishes the characterization
of $H_L^p(\rn)$ via, respectively, some square functions and some
Lusin-area functions.

\begin{theorem}\label{thm SFLAF Hardy}
Let ${L}$ be as in \eqref{L homo} and satisfy the Ellipticity condition
$(\mathcal{E}_0)$. Then

\begin{itemize}
\item [{\rm (i)}] for all $k\in\nn$ and $p\in(0,\,p_+(L))$, $H_L^p(\rn)
=H_{S_{L,\,k}}^p(\rn)$ with equivalent quasi-norms;

\item [{\rm (ii)}] for all $k\in\nn$ and $p\in(0,\,p_+(L))$, $H_L^p(\rn)
=H_{S_{h,\,L,\,k}}^p(\rn)$ with equivalent quasi-norms.
\end{itemize}
\end{theorem}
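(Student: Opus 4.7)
The plan is to follow the same four-step scheme that the paper employs for the maximal function characterizations: (a) handle $p=2$ via the bounded $H^\fz$ functional calculus of $L$ on $L^2(\rn)$; (b) handle $p\in(0,1]$ via the molecular characterization of $H_L^p(\rn)$ from \cite{cy12} (Theorem \ref{molecular characterization for Hardy space L0}); (c) handle $p\in(1,2)$ via complex interpolation (Proposition \ref{pro interpolation Hardy L0}); and (d) handle $p\in[2,p_+(L))$ via duality between $H_L^p(\rn)$ and $H_{L^*}^{p'}(\rn)$.

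For part (i), the strong ellipticity condition $(\mathcal{E}_1)$ ensures that $L$ is maximal accretive on $L^2(\rn)$ and thus admits a bounded $H^\fz$ calculus there. Applied to the admissible function $\psi_k(z):=z^k e^{-z}$, this yields
$$\dint_0^\fz\lf\|(sL)^k e^{-sL}f\r\|_{L^2(\rn)}^2\,\frac{ds}{s}\sim\lf\|f\r\|_{L^2(\rn)}^2,$$
which, after the substitution $s=t^{2m}$ and Fubini's theorem, gives $\|S_{L,k}(f)\|_{L^2(\rn)}\sim\|f\|_{L^2(\rn)}\sim\|S_L(f)\|_{L^2(\rn)}$. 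For $p\in(0,1]$ I would show, uniformly in molecules, that $\|S_{L,k}(\alpha)\|_{L^p(\rn)}\ls 1$ for every $(p,2,M)$-molecule $\alpha$ adapted to a ball $B$, by splitting $\rn$ into the usual annuli $\{U_j(B)\}_{j\ge 0}$ and invoking the $L^2(\rn)$ off-diagonal (Davies--Gaffney) estimates of $(t^{2m}L)^k e^{-t^{2m}L}$, which inherit from those of $e^{-tL}$ together with the analyticity of the semigroup. Interpolation then covers $p\in(1,2)$, and duality with the analogous statement for $L^*$ covers $p\in[2,p_+(L))$.

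Part (ii) reduces, via (i), to showing that $\|S_{h,L,k}(f)\|_{L^p(\rn)}\sim\|S_{L,k}(f)\|_{L^p(\rn)}$ for all $p\in(0,p_+(L))$. The direction $S_{L,k}\ls S_{h,L,k}$ mirrors the first inequality of \eqref{eqn MainAM} already proved in the paper, applied now with $e^{-t^{2m}L}f$ replaced by $(t^{2m}L)^{k-1}e^{-t^{2m}L}f$, which again satisfies the parabolic equation $\pat_s u+Lu=0$ in $s=t^{2m}$. The converse $S_{h,L,k}\ls S_{L,k}$ is obtained by first establishing, via the higher order parabolic Caccioppoli inequality \eqref{caccioppoli's inequality3 with epsilon}, the $L^2$ off-diagonal estimates for $(t\nabla)^m(t^{2m}L)^k e^{-t^{2m}L}$; with these in hand, the four-step scheme is rerun verbatim with $S_{h,L,k}$ in place of $S_{L,k}$, yielding $H_L^p(\rn)=H_{S_{h,L,k}}^p(\rn)$.

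The principal obstacle is the propagation of $L^2$ off-diagonal decay through the gradient $(t\nabla)^m$. For $(t^{2m}L)^k e^{-t^{2m}L}$ alone this is routine from Davies--Gaffney bounds and analyticity, but inserting $(t\nabla)^m$ cannot be achieved by kernel differentiation, since the coefficients $\{a_{\az,\,\bz}\}$ are merely bounded measurable; the only available tool is the higher order Caccioppoli inequality \eqref{caccioppoli's inequality3 with epsilon}, itself proved in the paper through the Barton-type induction that removes extraneous gradient terms from a weaker preliminary bound. Once these gradient off-diagonal estimates are secured, the molecular step for (ii) proceeds in complete analogy with that for (i), and the remaining ranges of $p$ follow by the same interpolation-duality argument.
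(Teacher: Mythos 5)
Your overall strategy is close to the paper's: part (i) for $p\in(0,\,2]$ is in effect the generalized square function characterization (Proposition \ref{pro GSFC} applied to $\psi(z)=z^ke^{-z}$), and for $p\in(2,\,p_+(L))$ it is the $L^p(\rn)$ boundedness of $S_{L,\,k}$ (Lemma \ref{lem 3.3}(i)) together with the duality/reproducing--formula argument of \eqref{eqn DA}; also note that $(\mathcal{E}_0)$, not $(\mathcal{E}_1)$, is what the theorem assumes and what suffices for the $L^2(\rn)$ functional calculus. The problems are in part (ii). The integration-by-parts argument you invoke does \emph{not} yield $\|S_{L,\,k}(f)\|_{L^p(\rn)}\ls\|S_{h,\,L,\,k}(f)\|_{L^p(\rn)}$: writing $\iint|(t^{2m}L)^ku|^2\eta$ as the sesquilinear form applied to the pair $\bigl((t^{2m}L)^{k-1}u,\ (t^{2m}L)^ku\,\eta\bigr)$ necessarily pairs $\nabla^m(t^{2m}L)^{k-1}u$ against $\nabla^m\bigl((t^{2m}L)^ku\,\eta\bigr)$, so what comes out is the index-shifted bound $\|S_{L,\,k}(f)\|_{L^p(\rn)}\ls\|S_{h,\,L,\,k-1}(f)\|_{L^p(\rn)}$ (the second inequality of Corollary \ref{cor 3.9}); moreover the pointwise estimate has the form $S_{L,\,k}\ls (S^{2}_{h,\,L,\,k-1})^{1/2}(S^{2}_{L,\,k})^{1/2}$ with enlarged apertures, so one cannot simply divide through and must invoke the iteration of Lemma \ref{mlem 1}. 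The equivalence $\|S_{h,\,L,\,k}(f)\|_{L^p(\rn)}\sim\|S_{L,\,k}(f)\|_{L^p(\rn)}$ you want to use as an intermediate step is only true a posteriori; the correct way to close the loop is to apply part (i) at level $k+1$ and conclude $\|f\|_{H_L^p(\rn)}\sim\|S_{L,\,k+1}(f)\|_{L^p(\rn)}\ls\|S_{h,\,L,\,k}(f)\|_{L^p(\rn)}$.

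The second gap is in your plan to ``rerun the four-step scheme verbatim'' for the inclusion $H_L^p(\rn)\subset H_{S_{h,\,L,\,k}}^p(\rn)$ using off-diagonal estimates for $(t\nabla)^m(t^{2m}L)^ke^{-t^{2m}L}$. Gradient families only satisfy $m$-$L^p$-$L^q$ off-diagonal estimates for $q<q_+(L)$ (Proposition \ref{pro Lp semiL0}(iv)), and $q_+(L)$ is in general strictly smaller than $p_+(L)$ (Proposition \ref{pro Lp semiL0}(ii) only gives $p_+(L)\ge (q_+(L))^{*m}$); this is exactly why Lemma \ref{lem 3.3}(ii) is stated on $(q_-(L),\,q_+(L))$. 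Your duality step would therefore stall at $q_+(L)$ and miss the range $[q_+(L),\,p_+(L))$. The paper avoids this (Proposition \ref{pro MI}(ii), Corollaries \ref{cor 3.9} and \ref{cor3.3}) by the Whitney-covering/change-of-aperture argument: the parabolic Caccioppoli inequality \eqref{caccioppoli's inequality3 with epsilon} gives the pointwise domination of $S^{\epsilon/2,\,2R,\,3/2}_{h,\,L,\,k}(f)$ by $S^{\epsilon/4,\,3R,\,2}_{L,\,k}(f)$, hence $\|S_{h,\,L,\,k}(f)\|_{L^p(\rn)}\ls\|S_{L,\,k}(f)\|_{L^p(\rn)}$ for \emph{every} $p\in(0,\,\fz)$, with no gradient off-diagonal estimates needed; combined with part (i) this gives the inclusion on the full range $(0,\,p_+(L))$. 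With these two repairs your outline reproduces the paper's proof.
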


The proof of Theorem \ref{thm SFLAF Hardy} will be given in Section \ref{s3}
of this article.

Let us end this section by making some conventions on the notation.
Throughout the paper, we always let $\nn:=\{1,2,\ldots\}$
and $\zz_+:=\nn\cup\{0\}$. Denote the {\it differential operator}
$\frac{\pat^{\az}}{\pat x_1^{\az_1}\cdots \pat x_n^{\az_n}}$
simply by $\pat^\az$, where $\az:=(\az_1,\ldots,\az_n)$ and
$|\az|:=\az_1+\cdots+\az_n$.  We use $C$ to denote a {\it positive
constant} that is independent of the main parameters involved but
whose value may differ from line to line, and $C_{(\az,\ldots)}$ to
denote a {\it positive constant} depending on the parameters $\az,$
$\ldots$. {\it Constants with subscripts}, such as $C_1$, do not
change in different occurrences. If $f\le Cg$, we then write $f\ls
g$ and, if $f\ls g\ls f$, we then write $f\sim g$. For any
$x\in\rr^n$, $r\in(0,\fz)$ and $\lz\in (0,\fz)$, let $B(x,r):=\{y\in\rr^n:|x-y|<r\}$
and $\lz B:=B(x,\lz r)$.
Also, for any set $E\subset\rn$, $\chi_E$ denotes its
{\it characteristic function} and, for all $z\in\cc$, $\Re e\,z$ denotes its
{\it real part}.

\section{The Hardy space $H_{L}^p(\rn)$}\label{s2}

\hskip\parindent In this section, we study the Hardy space $H_{L}^p(\rn)$
associated with the  homogeneous higher order elliptic operator $L$
in \eqref{L homo}. To this end, we first collect some known basic facts on $L$
in Subsection \ref{s2.1}; then, in Subsection \ref{s2.2}, we present some real-variable
properties of the Hardy space $H_{L}^p(\rn)$ associated with $L$ for $p\in(0,\,\fz)$.
Recall that, for $p\in(0,\,1]$, $H_{L}^p(\rn)$ has been studied in
\cite{cy12,ddy12}. Our results here also include the case $p\in(1,\,\fz)$.

\subsection{Homogeneous  higher order elliptic operators}\label{s2.1}

\hskip\parindent
Let $m\in\nn$ and  $\dot{W}^{m,\,2}(\rn)$
be the $m$-\emph{order homogeneous Sobolev space} equipped with the
usual \emph{norm}
\begin{eqnarray*}
\|f\|_{{\dot W}^{m,\,2}(\rn)}:
=\lf[\sum_{|\az|= m}\|\pat^\az f\|_{L^2(\rn)}^2\r]^{1/2}.
\end{eqnarray*}
For all multi-indices $\az$ and $\bz$ in $(\zz_+)^n$ satisfying $|\az|=m=|\bz|$,
let $a_{\az,\,\bz}$ be a complex valued $L^\fz$ function on $\rn$.
For all $f$ and $g\in
\dot W^{m,\,2}(\rn)$,  define the  \emph{sesquilinear form} $\mathfrak{a}_0$, mapping
$\dot W^{m,\,2}(\rn)\times \dot W^{m,\,2}(\rn)$ into $\cc$,
by
\begin{eqnarray}\label{form0}
\mathfrak{a}_0(f,\,g):=\dsum_{|\az|=m=|\bz|}\dint_{\rn} a_{\az,\,\bz}(x) \pat^{\bz}f(x)
\overline{\pat^\az g(x)}\,dx.
\end{eqnarray}

The following ellipticity condition on
$\{a_{\az,\,\bz}\}_{|\az|=m=|\bz|}$ is necessary.

\medskip

\noindent {\bf Ellipticity condition $(\mathcal{E}_{0})$.}
There exist constants
$0<\lz_0\le\Lambda_0<\fz$
such that, for all $f$ and $g\in \dot W^{m,\,2}(\rn)$,
\begin{eqnarray*}
\lf|\sum_{|\az|=m=|\bz|}
\int_\rn a_{\az,\,\bz}(x)\partial^\bz f(x)\overline{\partial^\az g(x)}\,dx\r|
\le &&\blz_0 \|\nabla^m f\|_{L^2(\rn)}\|\nabla^m g\|_{L^2(\rn)}
\end{eqnarray*}
and
$$\Re e \lf\{\sum_{|\az|=m=|\bz|}\int_\rn
 a_{\az,\,\bz}(x) \partial^\bz f(x)\overline{\partial^\az f(x)}\,dx\r\}\ge
\lz_0\,\|\nabla^m f\|_{L^2(\rn)}^2,$$
where
$$\|\nabla^m f\|_{L^2(\rn)}:=\lf[\sum_{|\az|=m} \int_{\rn} |\pat^\az f(x)|^2\,dx\r]^{1/2}.$$

\medskip

We also need the following strong ellipticity condition on
$\{a_{\az,\,\bz}\}_{|\az|=m=|\bz|}$.

\medskip

\noindent {\bf Strong ellipticity condition $(\mathcal{E}_{1})$.}
There exists a positive constant $\lz_1$ such that,
for all $\xi:=\{\xi_{\az}\}_{|\az|=m}$ with $\xi_\az\in\cc$
and almost every $x\in\rn$,
\begin{eqnarray*}
\Re e\lf\{ \dsum_{|\az|=m=|\bz|} a_{\az,\,\bz}(x)\xi_{\bz} \overline{\xi_{\az}}\r\}\ge
\lz_1 |\xi|^2=\lz_1\lf\{\dsum_{|\az|=m}\lf|\xi_\az\r|^2\r\}.
\end{eqnarray*}
Moreover, for all multi-indices $\az$ and $\bz$ with
$|\az|=m=|\bz|$, $a_{\az,\,\bz}\in L^\fz(\rn)$.

\medskip

\begin{remark}\label{r2.1}
It is easy to see that the Strong ellipticity condition $(\mathcal{E}_1)$
implies the Ellipticity condition $(\mathcal{E}_0)$. However,
the equivalence between $(\mathcal{E}_1)$ and $(\mathcal{E}_0)$
is only a specific feature of second order operators (see,
for example, \cite[p.\,15]{AT98}). For more relationships on
these two kinds of ellipticity conditions, we refer the reader to
\cite[p.\,365]{ahmt01}.
\end{remark}

Let us recall some basic facts on sesquilinear forms
from \cite[p.\,3, Section 1.2.1]{ou05}.

\begin{definition}[\cite{ou05}]\label{def form ou}
Assume that $\mathfrak{a}:
D(\mathfrak{a})\times D(\mathfrak{a})\to \cc$ is a sesquilinear form in the Hilbert space
$\mathcal{H}$.
\begin{itemize}
\item[(i)] $\mathfrak{a}$ is said to be \emph{densely defined} if the domain of $\mathfrak{a}$,
$D(\mathfrak{a})$, is dense in $\mathcal{H}$;

\item[(ii)] $\mathfrak{a}$  is said to be \emph{accretive} if, for all $u\in D(\mathfrak{a})$,
\begin{eqnarray*}
\Re e\, \lf(\mathfrak{a}(u,\,u)\r)\ge 0;
\end{eqnarray*}
\item[(iii)] $\mathfrak{a}$  is said to be  \emph{continuous} if there exists a nonnegative constant $M$
such that, for all $u$, $v\in D(\mathfrak{a})$,
\begin{eqnarray*}
\lf|\mathfrak{a}(u,\,v)\r|\le M \|u\|_{\mathfrak{a}}\|v\|_{\mathfrak{a}},
\end{eqnarray*}
where $\|u\|_{\mathfrak{a}}:=\sqrt{\Re e\,(\mathfrak{a}(u,\,u))+\|u\|_{\mathcal{H}}^2}$;

\item[(iv)] $\mathfrak{a}$  is said to be \emph{closed} if $(D(\mathfrak{a}),\,\|\cdot\|_{\mathfrak{a}})$
is a complete space.
\end{itemize}
\end{definition}

For a densely defined, accretive, continuous and closed sesquilinear form
in the Hilbert space $\mathcal{H}$,
we have the following conclusion from \cite[Proposition 1.22]{ou05}.
Recall that $\|\cdot\|_{\mathcal{H}}$ and $(\cdot,\,\cdot)_{\mathcal{H}}$
denote, respectively, the \emph{inner product} and the \emph{norm} of $\mathcal{H}$.

\begin{proposition}[\cite{ou05}]\label{pro Ouhabaz}
Assume that $\mathfrak{a}$ is a densely defined, accretive, continuous and closed
sesquilinear form in the Hilbert space $\mathcal{H}$. Then there
exists a densely defined operator $T$, defined by setting
\begin{eqnarray*}
D(T):=\lf\{u\in \mathcal{H}:\ \ \exists\ v\in \mathcal{H}\ \text{such that},
\ \text{for all}\ \phi\in D(\mathfrak{a}), \ \mathfrak{a}(u,\,\phi)=
(v,\,\phi)_{\mathcal{H}}\r\}
\end{eqnarray*}
and $Tu:=v$ for all $u\in D(T)$, such that, for all $\lz\in(0,\,\fz)$, $\lz I+T$
is invertible (from $D(T)$ into $\mathcal{H}$) and $(\lz I+T)^{-1}$
is bounded on $\mathcal{H}$.
Moreover, for all $\lz\in(0,\,\fz)$ and $f\in \mathcal{H}$,
\begin{eqnarray*}
\lf\|\lz\lf(\lz I+T\r)^{-1}(f)\r\|_{\mathcal{H}}\le \|f\|_{\mathcal{H}}.
\end{eqnarray*}
\end{proposition}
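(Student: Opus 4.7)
The plan is to construct $T$ and to invert $\lambda I + T$ by applying the non-symmetric Lax--Milgram lemma to the shifted form
$$\mathfrak{a}_\lambda(u, \phi) := \mathfrak{a}(u, \phi) + \lambda\, (u, \phi)_{\mathcal{H}}, \qquad u,\phi\in D(\mathfrak{a}),$$
regarded on the auxiliary Hilbert space $\mathcal{V} := (D(\mathfrak{a}), \|\cdot\|_{\mathfrak{a}})$. The inner product on $\mathcal{V}$ is $\langle u,v\rangle_{\mathfrak{a}} := \tfrac{1}{2}(\mathfrak{a}(u,v) + \overline{\mathfrak{a}(v,u)}) + (u,v)_{\mathcal{H}}$, and $\mathcal{V}$ is complete by the closedness hypothesis on $\mathfrak{a}$.

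First I would verify that $\mathfrak{a}_\lambda$ is continuous and coercive on $\mathcal{V}\times\mathcal{V}$. Continuity follows from the continuity of $\mathfrak{a}$ and the trivial bound $\|u\|_{\mathcal{H}}\le\|u\|_{\mathfrak{a}}$, while accretivity yields
$$\Re e\,\mathfrak{a}_\lambda(u,u) = \Re e\,\mathfrak{a}(u,u) + \lambda\|u\|_{\mathcal{H}}^2 \ge \min(1,\lambda)\,\|u\|_{\mathfrak{a}}^2.$$
For any fixed $f\in\mathcal{H}$, the antilinear functional $\phi\mapsto(f,\phi)_{\mathcal{H}}$ is continuous on $\mathcal{V}$ by Cauchy--Schwarz, so the non-symmetric Lax--Milgram lemma produces a unique $u_f\in\mathcal{V}$ satisfying $\mathfrak{a}_\lambda(u_f,\phi) = (f,\phi)_{\mathcal{H}}$ for every $\phi\in D(\mathfrak{a})$. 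By the very definition of $D(T)$ this is precisely the statement $u_f\in D(T)$ and $(\lambda I + T)u_f = f$, so $\lambda I + T$ is a bijection from $D(T)$ onto $\mathcal{H}$. Testing this identity at $\phi = u_f$ and discarding the nonnegative term $\Re e\,\mathfrak{a}(u_f,u_f)$ gives $\lambda\|u_f\|_{\mathcal{H}}^2\le\|f\|_{\mathcal{H}}\|u_f\|_{\mathcal{H}}$, which is the claimed contraction estimate $\|\lambda(\lambda I + T)^{-1}f\|_{\mathcal{H}}\le \|f\|_{\mathcal{H}}$.

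The step I expect to require the most care is the density of $D(T)$ in $\mathcal{H}$. Since $D(\mathfrak{a})$ is dense in $\mathcal{H}$ by hypothesis, it suffices to approximate each $w\in D(\mathfrak{a})$ by elements of $D(T)$. My plan is to set $u_n := n(nI + T)^{-1}w\in D(T)$ and to exploit the identity $\mathfrak{a}_n(u_n - w, \phi) = -\mathfrak{a}(w, \phi)$, obtained by subtracting $\mathfrak{a}_n(w, \phi) = \mathfrak{a}(w, \phi) + n(w, \phi)_{\mathcal{H}}$ from the defining relation $\mathfrak{a}_n(u_n, \phi) = n(w,\phi)_{\mathcal{H}}$. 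Testing with $\phi = u_n - w$, the coercivity gives both $n\|u_n - w\|_{\mathcal{H}}^2 \le M\|w\|_{\mathfrak{a}}\|u_n - w\|_{\mathfrak{a}}$ and, after absorbing a factor, $\|u_n - w\|_{\mathfrak{a}}\le M\|w\|_{\mathfrak{a}}$. Combining the two estimates yields $\|u_n - w\|_{\mathcal{H}} \le M\,n^{-1/2}\|w\|_{\mathfrak{a}}\to 0$, which concludes the density argument and completes the proof.
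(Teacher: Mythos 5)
Your proof is correct and complete. The paper does not prove this proposition at all --- it is quoted from Ouhabaz's book \cite{ou05} --- and your argument (Lax--Milgram for the coercive shifted form $\mathfrak{a}_\lambda$ on the form domain, the energy estimate at $\phi=u_f$ for the contraction bound, and the $u_n=n(nI+T)^{-1}w$ approximation for density of $D(T)$) is precisely the standard one given there, so there is nothing to reconcile.
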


For $\mathfrak{a}_0$ defined as in \eqref{form0}, from the fact that
$\dot W^{m,\,2}(\rn)$ is dense in $L^2(\rn)$ and the Ellipticity condition
$(\mathcal{E}_0)$, we deduce that $\mathfrak{a}_0$ is a densely defined,
accretive and continuous sesquilinear form.

Moreover, let $W^{m,\,2}(\rn)$
be the $m$-\emph{order inhomogeneous Sobolev space}
equipped with the usual \emph{norm}
\begin{eqnarray}\label{eqn inhoSovNorm}
\|f\|_{W^{m,\,2}(\rn)}:
=\lf[\sum_{0\le |\az|\le m}\|\pat^\az f\|_{L^2(\rn)}^2\r]^{1/2}.
\end{eqnarray}
For all $f\in D(\mathfrak{a}_0)$, by the Ellipticity condition
$(\mathcal{E}_0)$ and Plancherel's theorem,
it is easy to see that
\begin{eqnarray*}
\|f\|_{\mathfrak{a}_0}:=
\sqrt{\Re e\,\lf(\mathfrak{a}_0(f,\,f)\r)+\|f\|_{L^2(\rn)}^2}
\sim \|f\|_{W^{m,\,2}(\rn)}.
\end{eqnarray*}
This, combined with the fact that $W^{m,\,2}(\rn)$ is a Banach space, further implies
that
$$(\dot W^{m,\,2}(\rn),\,\|\cdot\|_{\mathfrak{a}_0})$$
is complete.
Thus, $\mathfrak{a}_0$ is closed. Using Proposition \ref{pro Ouhabaz},
we know that there exists a densely defined operator
$L$ in $L^2(\rn)$ associated with $\mathfrak{a}_0$, which is formally
written as in \eqref{L homo}.

Let $\omega\in[0,\,\pi/2)$. Recall  that an operator
$T$ in the Hilbert space $\mathcal{H}$ is said to be
$m$-$\omega$-{\it accretive} (or \emph{maximal $\omega$-accretive}) if
\begin{enumerate}
\item[(i)] the range of the operator $T+I$, $R(T+I)$, is dense in $\mathcal{H}$;
\item[(ii)] for all $u\in D(T)$, $|\arg(T(u),\,u)_{\mathcal{H}}|\le\omega$,
\end{enumerate}
where $\arg (T(u),\,u)_{\mathcal{H}}$ denotes the
{\it argument} of $(T(u),\,u)_{\mathcal{H}}$; see \cite[p.\,173]{ha06}.

It is known that, by \cite[Proposition 7.1.1]{ha06},
every closed m-$\omega$-accretive operator is of
\emph{type} $\omega$ in $L^2(\rn)$, namely, the
spectrum of $T$, $\sz(T)$, is contained in the \emph{sector}
\begin{eqnarray*}
S_\omega:=\{z\in\cc:\
|\arg z| \le \omega\}
\end{eqnarray*}
and, for each
$\tz\in (\omega,\,\pi)$, there exists a nonnegative constant $C$
such that, for all $z\in\cc\setminus S_\tz$,
$\|(T-zI)^{-1}\|_{\cl(L^2(\rn))}\le C|z|^{-1}$, where
$\|S\|_{\cl(\mathcal H)}$ denotes the {\it operator norm}
of the linear operator $S$ on the normed linear space $\mathcal H$.

Moreover, by \cite{ha06}, we know  that, if $T$ is of type $\omega$, then
$-T$ generates a semigroup $\{e^{-tT}\}_{t>0}$, which can be extended
to a bounded holomorphic semigroup $\{e^{-zT}\}_{z\in
S_{\pi/2-\omega}^0}$ in the \emph{open sector}
\begin{eqnarray*}
S_{\pi/2-\omega}^0:=\{z\in\cc\setminus \{0\}:\
|\arg{z}|<\pi/2-\omega\}.
\end{eqnarray*}

Recall that, by the Ellipticity condition $(\mathcal{E}_0)$, we know that $L$ is an
m-$\arctan\frac{\Lambda}{\lz}$-accretive operator in $L^2(\rn)$.
Thus, $-L$ generates a bounded holomorphic semigroup in the open sector
$S_{\pi/2-\arctan\frac{\Lambda}{\lz}}^0$.

The following $L^2(\rn)$ off-diagonal estimates of $\{e^{-zL}\}_{z\in
S_{\pi/2-\arctan\frac{\Lambda}{\lz}}^0}$ are well known (see, for example,
\cite[p.\,66]{au07}, \cite[Theorem 3.2]{ddy12} or \cite[Lemma 3.1]{cy12}).

\begin{proposition}\label{pro Gaffney estiamtes L0}
Let $L$ be as in \eqref{L homo}  and satisfy the
Ellipticity condition $(\mathcal{E}_0)$, and let
$\omega:=\arctan\frac{\Lambda_0}{\lz_0}$, where $\blz_0$ and $\lz_0$
are as in the Ellipticity condition $(\mathcal{E}_0)$.
Then, for all $\ell\in(0,\,1)$, $ k\in\zz_+$, the family of
operators, $\{(zL)^ke^{-zL}\}_{z\in
S_{\ell(\frac{\pi}{2}-\omega)}^0}$, satisfies the $m$-Davies-Gaffney
estimates in $z$. That is, there exist positive
constants $C$ and $\wz C$ such that, for all $f\in L^2(\rn)$
supported in $E$ and $z\in S_{\ell(\frac{\pi}{2}-\omega)}^0$,
\begin{eqnarray*}
\|(z L)^ke^{-zL}(f)\|_{L^2(F)}\le C
\exp\lf\{-\wz C\frac{\lf[\dist(E,\,F)\r]^{2m/(2m-1)}}{|z|^{1/(2m-1)}}
\r\}\|f\|_{L^2(E)}.
\end{eqnarray*}
\end{proposition}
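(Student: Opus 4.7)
The plan is to follow the classical Davies perturbation method, adapted to the higher order setting, and then bootstrap to the powers $(zL)^k$ via Cauchy's integral formula. I would first handle the case $k=0$ for real time $z=t>0$. Fix disjoint measurable sets $E,F\subset\rn$ and $f\in L^2(\rn)$ supported in $E$. Choose a smooth bounded $\phi\colon\rn\to\mathbb{R}$ with $\|\nabla^j\phi\|_{L^\fz(\rn)}\le 1$ for $j=1,\ldots,m$, vanishing on $E$ and comparable to $\dist(\cdot,E)\wedge K$ on $F$ for some $K>\dist(E,F)$. For $\rho\in(0,\fz)$, define the conjugated sesquilinear form
\[
\mathfrak{a}_\rho(u,v):=\mathfrak{a}_0\bigl(e^{-\rho\phi}u,\;e^{\rho\phi}v\bigr),\qquad u,v\in\dot{W}^{m,2}(\rn).
\]
Expanding the derivatives of $e^{\pm\rho\phi}u$ via the Leibniz rule and applying Young's inequality to balance the mixed terms against the coercive top-order piece, I expect to obtain a higher-order Davies-type bound
\[
\Re\,\mathfrak{a}_\rho(u,u)\ge \tfrac{\lambda_0}{2}\|\nabla^m u\|_{L^2(\rn)}^2 - C\rho^{2m}\|u\|_{L^2(\rn)}^2.
\]
Applying Proposition \ref{pro Ouhabaz} to $\mathfrak{a}_\rho+C\rho^{2m}(\cdot,\cdot)_{L^2(\rn)}$ yields a closed accretive operator $L_\rho$ with $\|e^{-tL_\rho}\|_{L^2(\rn)\to L^2(\rn)}\le e^{C\rho^{2m}t}$. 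Since on $\dot{W}^{m,2}(\rn)$ one has the intertwining identity $e^{\rho\phi}e^{-tL}(e^{-\rho\phi}g)=e^{-tL_\rho}(g)$, choosing $g:=e^{\rho\phi}f$ and exploiting $\phi\equiv 0$ on $E$ and $\phi\ge\dist(E,F)$ on $F$ gives
\[
\|e^{-tL}f\|_{L^2(F)}\le e^{C\rho^{2m}t-\rho\dist(E,F)}\|f\|_{L^2(E)}.
\]
Optimizing over $\rho$ by taking $\rho\sim(\dist(E,F)/t)^{1/(2m-1)}$ yields the claimed exponential factor.

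Second, for complex $z\in S_{\ell(\pi/2-\omega)}^0$ with $k=0$, I would repeat the argument with $\mathfrak{a}_\rho$ rotated by $e^{i\theta}$, where $\theta=\arg z$; the Ellipticity condition $(\mathcal{E}_0)$ together with $\omega=\arctan(\Lambda_0/\lambda_0)$ guarantees $\Re\bigl(e^{i\theta}\mathfrak{a}_\rho(u,u)\bigr)\ge c_\ell\|\nabla^m u\|_{L^2(\rn)}^2-C_\ell\rho^{2m}\|u\|_{L^2(\rn)}^2$ uniformly in $|\theta|\le\ell(\pi/2-\omega)$, and optimizing $\rho$ with $t$ replaced by $|z|$ produces the estimate. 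For the general case $k\in\nn$, I would fix $z\in S_{\ell(\pi/2-\omega)}^0$, choose $\ell'\in(\ell,1)$ and a small $r=r(\ell,\ell')>0$ so that the closed disc $\{\zeta:|\zeta-z|\le r|z|\}$ lies inside $S_{\ell'(\pi/2-\omega)}^0$. Holomorphy of the semigroup and Cauchy's integral formula give
\[
(zL)^k e^{-zL}=\frac{(-z)^k k!}{2\pi i}\oint_{|\zeta-z|=r|z|}\frac{e^{-\zeta L}}{(\zeta-z)^{k+1}}\,d\zeta,
\]
and substituting the $k=0$ off-diagonal bound on the contour (where $|\zeta|\sim|z|$ and $|\zeta-z|=r|z|$) absorbs the polynomial factors and delivers the conclusion with slightly worse constants $C,\wz{C}$.

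The technical heart of the argument is the higher-order Davies-type coercivity bound in the first step: the Leibniz expansion of $\mathfrak{a}_\rho(u,u)$ produces numerous mixed terms of the shape $\partial^\gamma e^{-\rho\phi}\cdot\partial^{\beta-\gamma}u$ paired with $\partial^\delta e^{\rho\phi}\cdot\partial^{\alpha-\delta}u$, each carrying a combined power $\rho^{|\gamma|+|\delta|}$ with $|\gamma|+|\delta|\le 2m$ and fewer than $m$ derivatives hitting $u$ whenever $|\gamma|+|\delta|\ge 1$. Controlling all these lower-order terms by the coercive top-order term modulo a residual of size $\rho^{2m}\|u\|_{L^2(\rn)}^2$ requires a Gagliardo--Nirenberg-type interpolation together with repeated weighted Young inequalities; the sharpness of the exponent $2m$ on $\rho$ is precisely what yields the exponent $2m/(2m-1)$ in the Davies--Gaffney decay and is the step I expect to be the main obstacle.
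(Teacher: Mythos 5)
The paper does not actually prove this proposition: it is quoted as a known result with references to \cite[p.\,66]{au07}, \cite[Theorem 3.2]{ddy12} and \cite[Lemma 3.1]{cy12}, and your Davies-perturbation argument (coercivity of the twisted form with residual $C\rho^{2m}\|u\|_{L^2(\rn)}^2$, optimization $\rho\sim(\dist(E,F)/|z|)^{1/(2m-1)}$, rotation of the form by $e^{i\arg z}$ using $\omega=\arctan(\Lambda_0/\lz_0)$, and Cauchy's integral formula for the powers $(zL)^k$) is precisely the standard proof given in those references. The step you single out as the main obstacle does close: each mixed term is of the schematic form $\rho^{a+b}\|\nabla^{m-a}u\|_{L^2(\rn)}\|\nabla^{m-b}u\|_{L^2(\rn)}$ with $1\le a+b\le 2m$ (using only $a_{\az,\bz}\in L^\fz(\rn)$ and Cauchy--Schwarz), and Gagliardo--Nirenberg followed by Young's inequality with exponents $\tfrac{2m}{2m-a-b}$ and $\tfrac{2m}{a+b}$ turns it into $\epsilon\|\nabla^m u\|_{L^2(\rn)}^2+C_\epsilon\rho^{2m}\|u\|_{L^2(\rn)}^2$ uniformly in $\rho>0$, which is exactly what yields the exponent $2m/(2m-1)$ after optimization.
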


We now consider the $L^p(\rn)$ theory of $\{e^{-tL}\}_{t>0}$.
Let $(p_-(L),\, p_+(L))$ be the \emph{maximal interval} of exponents
$p\in[1,\,\fz]$ such that $\{e^{-tL}\}_{t>0}$ is bounded on $L^p(\rn)$.
Let $(q_-(L),\, q_+(L))$ be the \emph{maximal interval} of exponents
$q\in[1,\,\fz]$ such that $\{\sqrt{t}\nabla^me^{-tL}\}_{t>0}$ is bounded on $L^q(\rn)$.
By \cite[pp.\,66-67]{au07} and \cite[Theorem 3.2]{ddy12}, we have the following
conclusion.

\begin{proposition}[\cite{au07,ddy12}]\label{pro Lp semiL0}
Let $L$ be as in \eqref{L homo} and satisfy the
Ellipticity condition $(\mathcal{E}_0)$. Then

\begin{itemize}

\item[{\rm (i)}]
\begin{eqnarray*}
\begin{cases}
(p_-(L),\, p_+(L))=(1,\,\fz),\ \, &\text{when}\ n\le 2m,\\ \\
\lf[\dfrac{n}{n+2m},\,\dfrac{n}{n-2m}\r]\subset (p_-(L),\, p_+(L)),\ \,
&\text{when}\ n>2m.
\end{cases}
\end{eqnarray*}
\item[{\rm (ii)}]
$q_-(L)=p_-(L)$, $q_+(L)>2$ and $p_+(L)\ge (q_+(L))^{*m}$, where, for any $q\in(1,\,\fz)$,
\begin{eqnarray*}
q^*:=
\begin{cases}
\dfrac{np}{n-p},\ \, &\text{when}\ p< n,\\ \\
\fz,\ \,&\text{when}\ p\ge n
\end{cases}
\end{eqnarray*}
denotes the Sobolev exponent of $q$ and $q^{*m}$ means the $m$-th iteration of the operation
$q\mapsto q^*$.

\item[{\rm (iii)}] For all $k\in\zz_+$
and $p_-(L)<p\le q<p_+(L)$,
the family $\{(tL)^ke^{-tL}\}_{t>0}$ of operators
satisfies the following $m$-$L^p$-$L^q$ off-diagonal estimates:
there exist positive constants $C$ and $\wz C$ such that, for any
closed sets $E$, $F$ in $\rn$, $t\in(0,\,\fz)$ and $f\in L^2(\rn)\cap
L^p(\rn)$ supported in $E$,
\begin{eqnarray*}
\lf\|(t L)^ke^{-tL}(f)\r\|_{L^q(F)}\le
Ct^{\frac{n}{2m}(\frac{1}{q}-\frac{1}{p})}
\exp\lf\{-\wz C\frac{\lf[d(E,\,F)\r]^{\frac{2m}{2m-1}}}{t^{\frac{1}
{2m-1}}}\r\}\|f\|_{L^p(\rn)}.
\end{eqnarray*}

\item[{\rm (iv)}]
For all $p_-(L)<p\le q<q_+(L)$,
the family $\{(t^{1/(2m)}\nabla)^ke^{-tL}\}_{t>0}$ of operators
satisfies the following $m$-$L^p$-$L^q$ off-diagonal estimates:
there exist positive constants $C$ and $\wz C$ such that, for any
closed sets $E$, $F$ in $\rn$, $t\in(0,\,\fz)$ and $f\in L^2(\rn)\cap
L^p(\rn)$ supported in $E$,
\begin{eqnarray*}
\lf\|\lf(t^{1/(2m)}\nabla\r)^ke^{-tL}(f)\r\|_{L^q(F)}\le
Ct^{\frac{n}{2m}(\frac{1}{q}-\frac{1}{p})}
\exp\lf\{-\wz C\frac{\lf[d(E,\,F)\r]^{\frac{2m}{2m-1}}}{t^{\frac{1}
{2m-1}}}\r\}\|f\|_{L^p(\rn)}.
\end{eqnarray*}
\end{itemize}
\end{proposition}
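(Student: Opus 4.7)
The statement is essentially a compendium of known facts for the semigroup $\{e^{-tL}\}_{t>0}$ generated by a higher order elliptic operator, so my plan is to organize the proof around three independent ingredients: (a) self-improvement of $L^2$ bounds to $L^p$ bounds in an interval around $2$; (b) Sobolev embedding comparisons linking the semigroup intervals to the gradient intervals; and (c) the passage from $L^2$ Davies-Gaffney estimates to $L^p$-$L^q$ off-diagonal estimates via annular decomposition.

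For part (i), the starting point is the $L^2$ boundedness of $\{e^{-tL}\}_{t>0}$ (which is immediate from accretivity of $\mathfrak{a}_0$) combined with the $L^2$ $m$-Davies-Gaffney estimates already available in Proposition \ref{pro Gaffney estiamtes L0}. When $n\le 2m$, Sobolev embedding $\dot W^{m,2}(\rn)\hookrightarrow L^\infty(\rn)$ (or into $\mathrm{BMO}$ when $n=2m$, handled by a trivial modification) together with spectral decay gives $L^1$-$L^\infty$ ultracontractivity, so $(p_-(L),p_+(L))=(1,\infty)$. When $n>2m$, I would iterate Sobolev embedding $m$ times starting from the $L^2$ Davies-Gaffney bound: each application lowers the exponent from $q$ to $qn/(n+q)$ (equivalently raises it from $q$ to $qn/(n-q)$ by duality), and one checks by the Nash-Moser style iteration that the endpoint $n/(n+2m)$ is reached in finitely many steps. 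The gaussian-type factor in the Davies-Gaffney estimate survives the interpolation (one integrates the annular pieces against $e^{-c r^{2m/(2m-1)}/t^{1/(2m-1)}}$) and ensures that boundedness, not merely $L^p$-$L^q$ mapping, is obtained.

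For part (ii), the identity $q_-(L)=p_-(L)$ comes from the resolvent formula
\begin{eqnarray*}
t^{1/(2m)}\nabla^m e^{-tL}=t^{1/(2m)}\nabla^m L^{-1/2}\,L^{1/2} e^{-tL},
\end{eqnarray*}
together with the fact that the operator $\nabla^m L^{-1/2}$ (the higher order Riesz transform) is bounded on $L^p$ for the same range as the semigroup at the lower end; this is Blunck-Kunstmann/Auscher type machinery based on good-$\lambda$ inequalities. The bound $q_+(L)>2$ is a reverse-Hölder self-improvement at the scale of the semigroup (Meyers's theorem in its parabolic guise), and $p_+(L)\ge (q_+(L))^{*m}$ is obtained by iterating the Sobolev-type inclusion $\|\nabla^m e^{-tL}f\|_{L^q}\gs t^{-1/(2m)}\|e^{-tL}f\|_{L^{q^*}}$ a total of $m$ times.

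For parts (iii) and (iv), the argument is identical in structure, so I would prove only (iii) and indicate the trivial modification. Fix $p_-(L)<p\le q<p_+(L)$ and $f\in L^p\cap L^2$ supported in $E$. Decompose $F=\bigcup_{j\ge 0} F_j$ into annuli $F_j:=\{x\in F:\,2^{j}t^{1/(2m)}\le d(x,E)+t^{1/(2m)}<2^{j+1}t^{1/(2m)}\}$. On each $F_j$ one has two competing bounds: the global $L^p$-$L^q$ estimate from part (i) combined with the parabolic self-similar scaling $\|(tL)^k e^{-tL}\|_{L^p\to L^q}\ls t^{\frac{n}{2m}(\frac1q-\frac1p)}$; and the $L^2$ Davies-Gaffney estimate of Proposition \ref{pro Gaffney estiamtes L0} composed with local $L^p\hookrightarrow L^2$ and $L^2\hookrightarrow L^q$ inclusions on thickened annuli. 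Interpolating (by Riesz-Thorin, or more efficiently by a direct H\"older argument against the annular volumes) produces a factor $\exp\{-cd(E,F_j)^{2m/(2m-1)}/t^{1/(2m-1)}\}$ on each piece; summing over $j$ gives the claimed estimate, since the geometric sum is dominated by its first term. For (iv) one replaces $(tL)^k e^{-tL}$ by $(t^{1/(2m)}\nabla)^k e^{-tL}$ and uses the analogous $L^2$ gradient Davies-Gaffney bound (which follows from Proposition \ref{pro Gaffney estiamtes L0} and Caccioppoli's inequality applied at scale $t^{1/(2m)}$).

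The main obstacle is part (ii), specifically the self-improvement $q_+(L)>2$: for second order operators this is the classical Meyers theorem, but in the higher-order case the reverse Hölder inequality for $\nabla^m u$ when $Lu=0$ requires the full force of Caccioppoli on balls together with a Gehring-type self-improving lemma, and its interaction with the semigroup off-diagonal bounds is delicate. Once this is in place, everything else is a careful but routine interpolation argument, and I would quote \cite[pp.\,66-67]{au07} and \cite[Theorem 3.2]{ddy12} for the technical details rather than reproduce them.
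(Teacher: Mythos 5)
The first thing to say is that the paper does not prove this proposition: it is imported verbatim from the literature, the entire justification being the sentence ``By \cite[pp.\,66-67]{au07} and \cite[Theorem 3.2]{ddy12}, we have the following conclusion.'' So there is no in-paper argument to compare against, and since you also end by deferring to \cite{au07} and \cite{ddy12}, your proposal is in that sense the same as the paper's treatment. Your outline of what those references actually do (ultracontractivity via Sobolev embedding for $n\le 2m$, Sobolev iteration and duality for $n>2m$, a Gehring/Meyers self-improvement for $q_+(L)>2$, and interpolation of the $L^2$ Davies--Gaffney bounds of Proposition \ref{pro Gaffney estiamtes L0} against global $L^p$--$L^q$ bounds for (iii) and (iv)) is the standard and correct route.

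Two of your steps, as literally written, would not go through. First, in (ii) the factorization $\sqrt{t}\,\nabla^m e^{-tL}=(\nabla^mL^{-1/2})(\sqrt{t}\,L^{1/2}e^{-tL})$ (note the correct normalization is $\sqrt t=(t^{1/(2m)})^m$ on $\nabla^m$, not a single factor $t^{1/(2m)}$) combined with the $L^p$-boundedness of the Riesz transform is circular for the purpose of identifying $q_-(L)$ with $p_-(L)$: the Riesz transform is bounded on $L^p$ precisely for $p\in(q_-(L),q_+(L))$ (Proposition \ref{pro bd RieszTFL0 Auscher}), which is the interval you are trying to identify. The argument in \cite{au07} instead applies a Blunck--Kunstmann weak-type $(p,p)$ extrapolation criterion directly to the family $\sqrt{t}\,\nabla^m e^{-tL}$, using its $L^2$ off-diagonal bounds. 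Second, in (iii) the ``global $L^p$--$L^q$ estimate from part (i)'' that you interpolate against is not part (i) itself (which only asserts $L^p\to L^p$ boundedness) but the hypercontractive bound $\|e^{-tL}\|_{L^p\to L^q}\ls t^{\frac{n}{2m}(\frac1q-\frac1p)}$, which must be extracted from the Sobolev-embedding step in the proof of (i)--(ii); once it is in hand, a single Riesz--Thorin interpolation applied to the operator $\chi_F(tL)^ke^{-tL}\chi_E$ between the $L^2\to L^2$ Gaffney bound and an $L^{p_1}\to L^{q_1}$ hypercontractive bound with $(\frac1p,\frac1q)$ on the open segment joining $(\frac12,\frac12)$ to $(\frac1{p_1},\frac1{q_1})$ already yields the claimed estimate, so the annular decomposition and the summation over $j$ are superfluous (and the ``local $L^p\hookrightarrow L^2$ inclusion'' you invoke runs the wrong way when $p<2$). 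Finally, your Sobolev iteration terminates at $\frac{2n}{n+2m}$, not $\frac{n}{n+2m}$; the latter is below $1$ and cannot be an endpoint of an interval of exponents in $[1,\fz]$.
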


Finally, we recall some results on the square root of $L$. Let $L$ be defined as
in \eqref{L homo}. It is known that $L$ is one-to-one and m-$\omega$-accretive.
By \cite[p.\,8]{AT98}, we know that $L$ has a bounded holomorphic functional
calculus in $L^2(\rn)$. Thus, its square root $L^{1/2}$ is well defined on $L^2(\rn)$.

Auscher et al.
proved the following result on Kato's square root problem of $L^{1/2}$
(see \cite[Theorem 1.1]{ahmt01}).

\begin{proposition}[\cite{ahmt01}]\label{pro kato root L0}
Let $L$ be as in \eqref{L homo} and satisfy the Ellipticity condition
$(\mathcal{E}_0)$. The square root of $L$ has a domain equal to
the Sobolev space $W^{m,\,2}(\rn)$ defined as in \eqref{eqn inhoSovNorm}.
Moreover, there exists a positive constant $C$ such that, for all
$f\in W^{m,\,2}(\rn)$,
\begin{eqnarray*}
\frac{1}{C}\lf\|\sqrt{L}(f)\r\|_{L^2(\rn)}\le \|\nabla^m f\|_{L^2(\rn)}\le C
\lf\|\sqrt{L}(f)\r\|_{L^2(\rn)}.
\end{eqnarray*}
\end{proposition}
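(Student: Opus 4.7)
The plan is to address the two inequalities separately, since the upper bound $\|\sqrt{L}(f)\|_{L^2(\rn)} \lesssim \|\nabla^m f\|_{L^2(\rn)}$ is essentially formal, while the lower bound is the Kato square root problem in its higher-order guise. For the easy direction, I would use that, by the functional calculus established for the m-$\omega$-accretive operator $L$, the form domain of $L$ coincides with the domain of $\sqrt{L}$, and on this common domain one has $\|\sqrt{L}(f)\|_{L^2(\rn)}^2 = \Re e\,\mathfrak{a}_0(f,f)$ up to imaginary parts controlled by continuity of $\mathfrak{a}_0$; combining this identity with the Ellipticity condition $(\mathcal{E}_0)$ (namely $\Re e\,\mathfrak{a}_0(f,f) \lesssim \|\nabla^m f\|_{L^2(\rn)}^2$ together with the reverse continuity bound) gives the desired estimate and also the identification of the domain of $\sqrt{L}$ with $W^{m,2}(\rn)$.

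For the reverse inequality $\|\nabla^m f\|_{L^2(\rn)} \lesssim \|\sqrt{L}(f)\|_{L^2(\rn)}$, I would follow the standard route reducing it to a quadratic estimate via the representation
\begin{equation*}
\nabla^m f = c\int_0^\infty \nabla^m (I+t^{2m}L)^{-1} \sqrt{L}(f)\,\frac{dt}{t}\cdot \sqrt{L}(f)^{-1}\ \text{(formally)},
\end{equation*}
so that by McIntosh-type arguments it suffices to prove the square function estimate
\begin{equation*}
\int_0^\infty \bigl\|t^m \nabla^m (I+t^{2m}L)^{-1} g\bigr\|_{L^2(\rn)}^2\,\frac{dt}{t} \lesssim \|g\|_{L^2(\rn)}^2
\end{equation*}
for all $g \in L^2(\rn)$. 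After some algebraic manipulations exchanging the resolvent for the semigroup using the $m$-Davies--Gaffney bounds from Proposition \ref{pro Gaffney estiamtes L0}, this reduces further to verifying a Carleson measure estimate on the function $\gamma_t(x) := t^m \nabla^m e^{-t^{2m}L}(P_t b)(x)$ for a suitably chosen family of ``pseudo-accretive'' test functions $b = \{b_Q\}$ indexed over dyadic cubes, where $P_t$ is a nice approximation to the identity.

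The main obstacle is precisely the construction and exploitation of the test functions $b_Q$: one needs, for each dyadic cube $Q$, a vector-valued function $b_Q$ (indexed by multi-indices of length $m$) approximately satisfying $L b_Q = $ (polynomial of degree $< m$) on an enlargement of $Q$, with the right size, off-diagonal decay and a nondegeneracy condition that forces averages of $\nabla^m b_Q$ to stay in a suitable accretive sector. For $m=1$ this is the usual construction $b_Q = (I+\epsilon \ell(Q)^2 L)^{-1}\bigl(x\cdot \eta_Q\bigr)$, but for $m\ge 2$ one has to allow polynomial data and use the higher-order ellipticity plus an $\epsilon$-trick to extract the accretivity of the averages; combined with a stopping-time/sawtooth decomposition and a $T(b)$ theorem of local type, this yields the Carleson estimate and hence the reverse inequality.

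Finally, I would invoke the full Kato-type package from Auscher--Hofmann--McIntosh--Tchamitchian, citing \cite{ahmt01}, since carrying out the above argument from scratch is lengthy; the point here is that this proposition will be used as a black-box ingredient in the subsequent Hardy-space theory and its proof is not the contribution of the present paper.
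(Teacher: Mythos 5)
Your ultimate justification -- invoking \cite[Theorem 1.1]{ahmt01} as a black box -- is exactly what the paper does: Proposition \ref{pro kato root L0} is stated as a cited result with no proof, so on that level the proposal matches. However, the expository sketch preceding the citation has the two inequalities backwards, and one step in it is genuinely false. The identity $\|\sqrt{L}(f)\|_{L^2(\rn)}^2=\Re e\,\mathfrak{a}_0(f,f)$ ``up to controllable imaginary parts'' does not hold for non-self-adjoint $L$: one only has $(L f,f)=(L^{1/2}f,(L^*)^{1/2}f)$, and the failure of $(L^{1/2})^*=L^{1/2}$ is precisely why the Kato problem is nontrivial (McIntosh's counterexample shows the form domain and $D(L^{1/2})$ can differ for abstract sectorial forms). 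Consequently the direction $\|\sqrt{L}(f)\|_{L^2(\rn)}\lesssim\|\nabla^m f\|_{L^2(\rn)}$ is not formal -- it \emph{is} the Kato square root estimate, proved via the quadratic/Carleson/$T(b)$ machinery you describe -- while the reverse bound $\|\nabla^m f\|_{L^2(\rn)}\lesssim\|\sqrt{L}(f)\|_{L^2(\rn)}$ is the one obtained ``for free'': apply the hard estimate to $L^*$ and use $\lambda_0\|\nabla^m f\|_{L^2(\rn)}^2\le\Re e\,\mathfrak{a}_0(f,f)=\Re e\,(L^{1/2}f,(L^*)^{1/2}f)\le\|L^{1/2}f\|_{L^2(\rn)}\|(L^*)^{1/2}f\|_{L^2(\rn)}\lesssim\|L^{1/2}f\|_{L^2(\rn)}\|\nabla^m f\|_{L^2(\rn)}$. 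Since you do not actually execute the sketch, this does not invalidate the conclusion, but as written the roadmap would send a reader down the wrong direction.
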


Proposition \ref{pro kato root L0} implies immediately that the Riesz transform
$\nabla^mL^{-1/2}$ associated with $L$ is bounded on $L^2(\rn)$.
Moreover, Auscher proved the following boundedness of $\nabla^mL^{-1/2}$
on $L^p(\rn)$ (see \cite[p.\,68]{au07}).

\begin{proposition}[\cite{au07}]\label{pro bd RieszTFL0 Auscher}
Let $L$ be as in \eqref{L homo} and satisfy the Ellipticity condition
$(\mathcal{E}_0)$. Then, for all $p\in(q_-(L),\,q_+(L))$,
$\nabla^mL^{-1/2}$ is bounded on $L^p(\rn)$.
\end{proposition}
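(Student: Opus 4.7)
The plan is to prove the result in two main stages: first establishing $L^2(\rn)$ boundedness, and then extrapolating to the full range via off-diagonal estimates.

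For the base case $p=2$, the conclusion is immediate from Kato's square root identity in Proposition \ref{pro kato root L0}: substituting $f=L^{-1/2}(g)$ into $\|\nabla^m f\|_{L^2(\rn)}\sim \|\sqrt{L}(f)\|_{L^2(\rn)}$ yields $\|\nabla^m L^{-1/2}(g)\|_{L^2(\rn)}\sim \|g\|_{L^2(\rn)}$, so $\nabla^m L^{-1/2}$ extends boundedly to $L^2(\rn)$. Note that $q\in(q_-(L),q_+(L))$ always contains a neighborhood of $2$ by Proposition \ref{pro Lp semiL0}(ii), so the base case lies in the target range.

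To reach $p\in(2,q_+(L))$, I would invoke the Auscher--Martell extrapolation theorem. The starting point is the subordination identity
\begin{eqnarray*}
\nabla^m L^{-1/2}=\frac{1}{\sqrt{\pi}}\dint_0^\fz \sqrt{t}\,\nabla^m e^{-tL}\,\frac{dt}{t},
\end{eqnarray*}
combined with the $L^p$-$L^q$ off-diagonal estimates for $\sqrt{t}\nabla^m e^{-tL}$ from Proposition \ref{pro Lp semiL0}(iv). For each ball $B$ of radius $r$, I would decompose
$\nabla^m L^{-1/2}=\nabla^m L^{-1/2}[I-(I-e^{-r^{2m}L})^N]+\nabla^m L^{-1/2}(I-e^{-r^{2m}L})^N$
for a sufficiently large integer $N$. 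The first term is controlled by the $L^2$ boundedness of $\nabla^m L^{-1/2}$ and a Hardy--Littlewood maximal function argument, while the second is shown to enjoy $L^2$-$L^q$ off-diagonal decay on dyadic annuli around $B$, by expanding $(I-e^{-r^{2m}L})^N$ into an average of semigroup operators at scale $r^{2m}$ and applying the subordination representation together with the off-diagonal bounds of Proposition \ref{pro Lp semiL0}(iv). These are exactly the hypotheses of the Auscher--Martell extrapolation theorem, which then yields boundedness on $L^p(\rn)$.

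For $p\in(q_-(L),2)=(p_-(L),2)$, I would proceed by duality combined with the Blunck--Kunstmann extrapolation theorem. Since $L^*$ is an elliptic operator of the same form with coefficients $\{\overline{a_{\bz,\,\az}}\}_{|\az|=m=|\bz|}$, both Propositions \ref{pro Lp semiL0} and \ref{pro kato root L0} apply equally to $L^*$. The $L^{p'}$ boundedness of the formal adjoint (a divergence-form operator of order $m$ composed with $(L^*)^{-1/2}$) for $p'\in(2,q_+(L^*))$ is then established by the same strategy as above, now using off-diagonal bounds on the adjoint semigroup with the divergence operator on the test function side. The main obstacle throughout is verifying the precise off-diagonal $L^2$-$L^q$ approximation inequalities required by the two extrapolation theorems: the cancellation supplied by $(I-e^{-r^{2m}L})^N$ must be balanced against the off-diagonal bounds of Proposition \ref{pro Lp semiL0}(iv) with enough quantitative control to reach every $q$ arbitrarily close to $q_+(L)$, which demands a delicate analysis at the critical endpoint.
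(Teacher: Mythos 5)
First, a point of reference: the paper does not prove this proposition at all --- it is imported verbatim from Auscher's memoir (see \cite[p.\,68]{au07}), so there is no internal proof to compare against, and your sketch has to stand on its own. As it happens, your outline for $p=2$ (Kato's estimate from Proposition \ref{pro kato root L0} applied to $f=L^{-1/2}g$) and for $p\in(2,\,q_+(L))$ (the $p>2$ extrapolation theorem with $A_r:=I-(I-e^{-r^{2m}L})^N$, the subordination formula, and the off-diagonal bounds of Proposition \ref{pro Lp semiL0}(iv)) is exactly the route taken in \cite{au07}; modulo the fact that in the standard hypotheses it is the term $TA_r$ that requires the $L^2$--$L^q$ improvement controlled by $M(|Tf|^2)$ and the term $T(I-e^{-r^{2m}L})^N$ that requires $L^2$ off-diagonal decay controlled by $M(|f|^2)$ (your description swaps these roles somewhat), this half is fine as a sketch.

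The genuine gap is in the range $p\in(q_-(L),\,2)$. There are two problems with the duality route as you state it. First, a mismatch of tools: the Blunck--Kunstmann criterion is precisely the $p<2$ weak-type criterion, designed to be applied \emph{directly} to $T=\nabla^mL^{-1/2}$; once you dualize you are in the regime $p'>2$ and would need the $p>2$ theorem for the adjoint instead. Second, and more seriously, proving the adjoint $(L^*)^{-1/2}\,{}^t\nabla^m$ bounded on $L^{p'}$ for $p'\in(2,\,q_+(L^*))$, as you propose, only yields boundedness of $\nabla^mL^{-1/2}$ on $L^p$ for $p\in((q_+(L^*))',\,2)$, and $(q_+(L^*))'$ is in general strictly larger than $q_-(L)=p_-(L)$: one only knows $q_+(L^*)\le p_+(L^*)=(p_-(L))'$, and strict inequality does occur. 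So your argument does not reach the full claimed interval. The standard repair --- and the actual argument behind the cited result --- is to forgo duality and apply the Blunck--Kunstmann weak $(p_0,p_0)$ criterion directly to $\nabla^mL^{-1/2}$ with $A_r=I-(I-e^{-r^{2m}L})^N$, verifying the required $L^{p_0}$--$L^2$ off-diagonal estimates for $\nabla^mL^{-1/2}(I-e^{-r^{2m}L})^N$ on dyadic annuli from Proposition \ref{pro Lp semiL0}(iii)--(iv) and the subordination formula, for each $p_0\in(p_-(L),\,2)$; interpolation with the $L^2$ bound then gives the full range. (The duality route can be salvaged, but then the adjoint must be treated on all of $(2,\,p_+(L^*))$ using the family $t^{1/2}e^{-tL^*}\,{}^t\nabla^m$, whose mapping properties are dual to those of $t^{1/2}\nabla^me^{-tL}$, rather than the gradient-of-the-adjoint-semigroup family that defines $q_+(L^*)$.)
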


We also refer the reader to \cite[Theorem 1.2]{bk04} for a related result on the
boundedness of $\nabla^mL^{-1/2}$.

\subsection{The Hardy space $H_{L}^p(\rn)$}\label{s2.2}

\hskip\parindent  Let ${L}$ be the homogeneous higher order operator defined
as in \eqref{L homo} that satisfies the Ellipticity condition $(\mathcal{E}_0)$.
Let $H_L^p(\rn)$ be the Hardy space associated with $L$ defined as in Definition
\ref{Hardy space aoociated with L0 df}.
In this subsection, we give some real-variable properties of $H_L^p(\rn)$
for $p\in(0,\,\fz)$. Our first result is the following
complex interpolation of $H_{L}^p(\rn)$. Recall (\cite{cms85}) that,
for all $p\in(0,\,\fz)$,  a function $F$ on $\mathbb{R}^{n+1}_+$ is said to be in
the {\it tent space} $T^p(\mathbb{R}^{n+1}_+)$, if
$\|F\|_{T^p(\mathbb{R}^{n+1}_+)}=:\|\mathcal{A}(F)
\|_{L^p(\rn)}<\fz$, where
\begin{eqnarray}\label{meqnA}
\mathcal{A}(F)(x):=\lf\{\iint_{\Gamma(x)}\lf|F(y,\,t)\r|^2
\frac{dy\,dt}{t^{n+1}}\r\}^{\frac{1}{2}},
\end{eqnarray}
with $\Gamma(x)$ for all $x\in\rn$ as in \eqref{1.x1},
denotes the $\mathcal{A}$-{\it functional} of $F$ (see \cite{cms85}
for more properties of tent spaces).

\begin{proposition}\label{pro interpolation Hardy L0}
Let $L$ be as in \eqref{L homo} and satisfy the Ellipticity condition
$(\mathcal{E}_0)$.  Then, for each $\tz\in(0,\,1)$
and $0<p_1<p_2<\fz$,
\begin{eqnarray*}
\lf[H_{L}^{p_1}(\rn),\,H_{L}^{p_2}(\rn)\r]_\tz=H_{L}^p(\rn),
\end{eqnarray*}
where $p$ satisfies $\frac{1}{p}=\frac{1-\tz}{p_1}+\frac{\tz}{p_2}$
and $[\cdot,\,\cdot]_{\tz}$ denotes the complex interpolation (see,
for example, \cite[Section 7]{KMM07}).
\end{proposition}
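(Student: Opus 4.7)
My plan is to realize $H_L^p(\rn)$ as a retract of the tent space $T^p(\rr^{n+1}_+)$ for every $p\in(0,2]$, invoke the known complex interpolation of tent spaces, and extend the statement to $p>2$ via duality. This strategy leverages the square-function definition of $H_L^p$ directly and runs parallel to the argument used for Hardy spaces associated to second-order divergence-form operators.

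To set up the coretraction-retraction pair, I introduce
$$Q_L(f)(x,t):=\lf(t^{2m}L\r)e^{-t^{2m}L}(f)(x),\quad \pi_L(F)(x):=c_m\int_0^\fz\lf(t^{2m}L\r)e^{-t^{2m}L}\lf(F(\cdot,t)\r)(x)\,\frac{dt}{t},$$
where $c_m$ is chosen so that the Calder\'on reproducing formula $f=c_m\int_0^\fz (t^{2m}L)^2e^{-2t^{2m}L}f\,\frac{dt}{t}$ holds on the dense subspace $\overline{R(L)}=L^2(\rn)$, density coming from the fact that $L$ is one-to-one and m-accretive in $L^2(\rn)$. The formula follows from the bounded $H^\fz$-functional calculus of $L$ on $L^2(\rn)$ applied to the scalar identity $c_m\int_0^\fz s^2 e^{-2s}\,\frac{ds}{s}=1$, whence $\pi_L\circ Q_L=I$ on a dense subspace of $H_L^p(\rn)$.

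Next I would establish the boundedness of $Q_L\colon H_L^p(\rn)\to T^p(\rr^{n+1}_+)$ and $\pi_L\colon T^p(\rr^{n+1}_+)\to H_L^p(\rn)$ for every $p\in(0,2]$. The first is in fact an isometry by Definition \ref{Hardy space aoociated with L0 df}, since $\|Q_L f\|_{T^p}=\|\mathcal{A}(Q_L f)\|_{L^p(\rn)}=\|S_L f\|_{L^p(\rn)}=\|f\|_{H_L^p(\rn)}$. For $\pi_L$, the case $p=2$ is a consequence of Plancherel's theorem and the $H^\fz$-calculus, while the case $p\in(0,1]$ would proceed via the atomic decomposition of $T^p$ (Coifman-Meyer-Stein for $p=1$; Russ for $p<1$): for a $T^p$-atom $a$ supported in a tent over a ball $B$, I would split the time integral in the definition of $\pi_L(a)$ into local and annular pieces and use the $m$-Davies-Gaffney off-diagonal estimates of Proposition \ref{pro Gaffney estiamtes L0} to show that $\pi_L(a)$ is a bounded multiple of an $H_L^p$-molecule adapted to $B$. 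The molecular characterization of $H_L^p(\rn)$ from \cite{cy12} then yields $\|\pi_L(a)\|_{H_L^p(\rn)}\ls 1$ uniformly, and interpolating between the $p=1$ and $p=2$ bounds through the tent-space scale covers $p\in(1,2)$.

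With the coretraction-retraction pair in place, the retract principle (Bergh-L\"ofstr\"om, Theorem 6.4.2) combined with the complex interpolation $[T^{p_1},T^{p_2}]_\tz=T^p$ (Coifman-Meyer-Stein \cite{cms85} for $p_i\ge 1$; Bernal and Cohn-Verbitsky for $p_i<1$) yields the proposition whenever $p_2\le 2$. For $p_2>2$, I would appeal to the duality $H_L^{p_i}=(H_{L^*}^{p_i'})^*$ from Definition \ref{Hardy space aoociated with L0 df}: when both $p_1,p_2>2$, applying the already-proven interpolation to the reflexive preduals $H_{L^*}^{p_i'}$ (with $p_i'\in(1,2)$) and invoking the duality theorem for complex interpolation transfers the identity to $H_L^p$; the mixed case $p_1\le 2<p_2$ is handled by reiteration (Wolff's theorem) through the value $p=2$. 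The principal technical obstacle is the uniform atom-to-molecule estimate for $\pi_L$: delivering it requires combining the off-diagonal decay from Proposition \ref{pro Gaffney estiamtes L0} with a careful decomposition of the time integral into annular regions around the atom's supporting ball, and it is precisely here that the higher-order Ellipticity condition $(\mathcal{E}_0)$ enters most crucially.
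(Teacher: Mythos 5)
Your proposal is correct and follows essentially the same route as the paper, which proves this proposition in one line by citing the complex interpolation of tent spaces together with the fact that $H_L^p(\rn)$ is a retract of $T^p(\rr^{n+1}_+)$ (referring to \cite[Lemma 4.20]{hmm} for the case $m=1$). You simply supply the details the paper omits -- the coretraction--retraction pair $(Q_L,\pi_L)$, the atom-to-molecule estimate via the Davies--Gaffney bounds, and a duality/reiteration step for $p>2$ in place of running the retract argument directly on that range.
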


\begin{proof}
The proof of Proposition \ref{pro interpolation Hardy L0} is a consequence of
the complex interpolation of tent spaces $T^p(\rr^{n+1}_+)$ and the fact that
$H_{L}^{p}(\rn)$ is a retract of $T^p(\rr^{n+1}_+)$ (see \cite[Lemma 4.20]{hmm}
for more details in the case when $m=1$), the details being omitted.
\end{proof}

For $H_{L}^p(\rn)$ with $p\in(0,\,1]$,
one of its most useful properties is its molecular
characterization. To state it, we first recall the following notion of
$(p,\,2,\,M,\,q)_{L}$-molecules.

\begin{definition}\label{def molecule L0}
Let ${L}$ be as in \eqref{L homo} and satisfy the Ellipticity condition $(\mathcal{E}_0)$,
$p\in(0,\,1]$, $\epsilon\in(0,\,\fz)$ and
$M\in\nn$.  A function $\az\in L^2(\rn)$ is called a
$(p,\,2,\,M,\,\epsilon)_{{L}}$-{\it molecule}
if there exists a ball $B\subset\rn$ such that,
for each $\ell\in\{0,\,\ldots,\,M\}$, $\az$
belongs to the range of $L^\ell$ in $L^2(\rn)$
and, for all $i\in\zz_+$ and $\ell\in\{0,\,\ldots,\,M\}$,
\begin{eqnarray*}
\lf\|\lf(r_B^{2m}{L}\r)^{-\ell} (\az)\r\|_{L^2(S_i(B))}
\le2^{-i\epsilon} |2^iB|^{\frac{1}{2}-\frac{1}{p}}.
\end{eqnarray*}

Assume that $\{\az_j\}_{j}$ is a sequence of
$(p,\,2,\,M,\,\epsilon)_{{L}}$-molecules and
$\{\lz_j\}_{j}\in l^p$. For
any $f\in L^2(\rn)$, if $f=\sum_{j} \lz_j \az_j$ in $L^2(\rn)$,
then $\sum_{j} \lz_j\az_j$ is called a {\it molecular}
$(p,\,2,\,M,\,\epsilon)_{{L}}$-{\it representation} of $f$.
\end{definition}

\begin{definition}\label{molecular Hardy space associated with L2+k}
Let ${L}$ be as in \eqref{L homo} and satisfy the Ellipticity
condition $(\mathcal{E}_0)$, and let
$p\in(0,\,1]$, $\epsilon\in(0,\,\fz)$ and
$M\in\nn$.  The {\it molecular Hardy space $H_{{L},\,\mol,\,M,\,\epsilon}^p(\rn)$}
is defined as the completion of the space
$$\mathbb{H}_{{L},\,\mol,\,M,\,\epsilon}^p(\rn):=
\{f\in L^2(\rn):\ f\ \text{ has a molecular}\
(p,\,2,\,M,\,\epsilon)_{L}\text{-representation}\}$$  with respect to the
\emph{quasi-norm}
\begin{eqnarray*}
\|f\|_{H_{{L},\,\mol,\,M,\,\epsilon}^p(\rn)} :=&&\inf\lf\{\lf(\dsum_{j} |\lz_j|^p\r)^{1/p}:\
f=\dsum_{j} \lz_j\az_j \ \text{is a molecular}\r.\\
&&\hspace{5cm}(p,\, 2,\,M,\,\epsilon)_{{L}}\text{-representation}\Bigg\},
\end{eqnarray*}
where the infimum is taken over all the molecular $(p,\,2,\,
M,\,\epsilon)_{{L}}$-representations of $f$ as above.
\end{definition}

\begin{theorem}[\cite{cy12,ddy12}]
\label{molecular characterization for Hardy space L0}
Let $L$ be as in \eqref{L homo} and satisfy the Ellipticity condition $(\mathcal{E}_0)$,
$p\in(0,\,1]$, $\epsilon\in(0,\,\fz)$ and
$M\in\nn$ such that $M>\frac{n}{2m}(\frac{1}{p}-\frac{1}{2})$. Then
$H_{L}^p(\rn)= H_{L,\,\mol,\,M,\,\epsilon}^p(\rn)$
with equivalent quasi-norms.
\end{theorem}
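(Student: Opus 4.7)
The plan is to prove the two inclusions separately; Proposition~\ref{pro Gaffney estiamtes L0} is the essential analytic tool on both sides. For the easy direction $H_{L,\mol,M,\epsilon}^p(\rn)\subset H_L^p(\rn)$, the $p$-subadditivity of $\|\cdot\|_{H_L^p(\rn)}^p$ (valid since $p\in(0,1]$) together with a density argument reduces matters to the uniform estimate $\|S_L(\az)\|_{L^p(\rn)}\lesssim 1$ for every $(p,2,M,\epsilon)_L$-molecule $\az$ adapted to a ball $B=B(x_B,r_B)$. Decomposing $\rn=\bigcup_i S_i(B)$ and applying H\"older on each annulus, I need to control $\|S_L(\az)\|_{L^2(S_i(B))}$ with sufficient geometric decay in~$i$. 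For $i\le 4$, I use the $L^2$-boundedness of $S_L$ (from the bounded $H^\infty$-calculus of~$L$ via McIntosh's theorem) and the size condition for $\az$. For large $i$, I factor $\az=L^M(L^{-M}\az)$ to rewrite $t^{2m}Le^{-t^{2m}L}\az=t^{-2mM}(t^{2m}L)^{M+1}e^{-t^{2m}L}(L^{-M}\az)$, split the $t$-integral in~\eqref{4.1} at $t\sim 2^{i-3}r_B$, and apply Proposition~\ref{pro Gaffney estiamtes L0} with $k=M+1$ to extract super-exponential off-diagonal decay on the small-$t$ regime; on the large-$t$ regime, I use the crude $L^2$-bound together with $t^{-2mM}\le(2^{i-3}r_B)^{-2mM}$ and the molecular decay of the annular pieces of $L^{-M}\az$. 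The hypothesis $M>\frac{n}{2m}(\frac{1}{p}-\frac{1}{2})$ is exactly what makes the $t^{-2mM}$-gain dominate the volume growth $|2^iB|^{1/p-1/2}$ and ensures $\ell^p$-summability.

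For the reverse inclusion, take $f\in\mathbb{H}_L^p(\rn)$ and apply the functional-calculus reproducing formula
\[
f = c_M\int_0^{\fz}(t^{2m}L)^{M}e^{-t^{2m}L}\lf[(t^{2m}L)e^{-t^{2m}L}f\r]\frac{dt}{t}\qquad\text{in }L^2(\rn),
\]
whose normalizing constant $c_M$ comes from evaluating $\int_0^\fz s^{M+1}e^{-2s}\,\frac{ds}{s}$ via the bounded $H^\infty$-calculus of $L$. Setting $F(y,t):=(t^{2m}L)e^{-t^{2m}L}f(y)$, one has $\|F\|_{T^p(\rr^{n+1}_+)}=\|S_L(f)\|_{L^p(\rn)}\sim\|f\|_{H_L^p(\rn)}$; the Coifman--Meyer--Stein tent-space atomic decomposition of \cite{cms85} yields $F=\sum_j\lz_j A_j$ with $T^p$-atoms $A_j$ supported in tents $\widehat{B_j}$ and $\bigl(\sum_j|\lz_j|^p\bigr)^{1/p}\lesssim\|F\|_{T^p(\rr^{n+1}_+)}$. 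Substituting and exchanging sum with integral produces $f=\sum_j\lz_j\az_j$ in $L^2(\rn)$, where
\[
\az_j:=c_M\,L^M\!\int_0^{r_{B_j}}t^{2mM}e^{-t^{2m}L}A_j(\cdot,t)\,\frac{dt}{t},
\]
so each $\az_j$ lies in the range of $L^M$ by construction.

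The main obstacle is to verify that each $\az_j$ is, up to a universal multiplicative constant, a $(p,2,M,\epsilon)_L$-molecule adapted to $B_j$. For $\ell\in\{0,\ldots,M\}$ and $i\in\zz_+$, write
\[
(r_{B_j}^{2m}L)^{-\ell}\az_j = c_M\int_0^{r_{B_j}}(t/r_{B_j})^{2m\ell}(t^{2m}L)^{M-\ell}e^{-t^{2m}L}A_j(\cdot,t)\,\frac{dt}{t}
\]
with $(t/r_{B_j})^{2m\ell}\le 1$. Dualizing against $g\in L^2(S_i(B_j))$ of unit norm and applying Cauchy--Schwarz in the measure $\frac{dy\,dt}{t}$ bounds the pairing by $\|A_j\|_{T^2}\le|B_j|^{1/2-1/p}$ times $\bigl(\int_0^{r_{B_j}}(t/r_{B_j})^{4m\ell}\|(t^{2m}L^*)^{M-\ell}e^{-t^{2m}L^*}g\|_{L^2(B_j)}^2\frac{dt}{t}\bigr)^{1/2}$. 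For large~$i$, Proposition~\ref{pro Gaffney estiamtes L0} applied between $S_i(B_j)$ and $B_j$ produces the super-exponential factor $\exp(-\wz{C}(2^ir_{B_j}/t)^{2m/(2m-1)})$, which after integration in $t$ yields arbitrary polynomial decay in~$2^i$; for $i\le 4$, the square-function quadratic estimate for $L^*$ (with the $(t/r_{B_j})^{4m\ell}$ damping handling the endpoint $\ell=M$, where the $M-\ell$-power vanishes) controls the right-hand factor by a universal constant. Converting $|B_j|^{1/2-1/p}$ to $|2^iB_j|^{1/2-1/p}$ costs a factor $2^{in(1/p-1/2)}$, absorbed into the gained decay. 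The hypothesis $M>\frac{n}{2m}(\frac{1}{p}-\frac{1}{2})$ then ensures convergence of $\sum_j\lz_j\az_j$ in the $H_L^p$-quasinorm via the easy direction, giving $\|f\|_{H_{L,\mol,M,\epsilon}^p(\rn)}\lesssim\bigl(\sum_j|\lz_j|^p\bigr)^{1/p}\lesssim\|f\|_{H_L^p(\rn)}$, and a density argument completes the proof.
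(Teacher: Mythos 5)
The paper does not prove this theorem itself; it is quoted from \cite{cy12,ddy12}, and your proposal correctly reconstructs the standard argument used there (and in \cite{hm09,hmm} for $m=1$): uniform $L^p$ estimates of $S_L$ on molecules via annular decomposition and the $m$-Davies--Gaffney estimates of Proposition \ref{pro Gaffney estiamtes L0} for one inclusion, and the Calder\'on reproducing formula plus the tent-space atomic decomposition of \cite{cms85} to manufacture molecules for the other, with $M>\frac{n}{2m}(\frac{1}{p}-\frac{1}{2})$ entering exactly where you say it does. Your handling of the endpoint $\ell=M$ via the $(t/r_{B_j})^{4m\ell}$ damping is the right fix for the failure of the quadratic estimate at the zeroth power of $t^{2m}L^*$, so the outline is sound and consistent with the cited proofs.
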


For more characterizations of $H_{L}^p(\rn)$ with $p\in(0,\,1]$,
we refer the reader to \cite{cy12,ddy12}.

We now study the relationship between $H_{L}^p(\rn)$ and
the Lebesgue space $L^p(\rn)$.

\begin{lemma}\label{lem relation Hardy Lebesgue L0}
Let $L$ be as in \eqref{L homo} and satisfy the Ellipticity condition $(\mathcal{E}_0)$,
and let $p_-(L)$ and $p_+(L)$ be as in Proposition \ref{pro Lp semiL0}.
Then, for all $p\in(p_-(L),\,p_+(L))$,
$H_{L}^p(\rn)=L^p(\rn)$
with equivalent norms.
\end{lemma}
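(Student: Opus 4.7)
The target equivalence $H_L^p(\rn)=L^p(\rn)$ reduces, by the density of $L^2(\rn)\cap L^p(\rn)$ in $L^p(\rn)$ and the fact that $H_L^p(\rn)$ is defined as the completion of $\{f\in L^2(\rn):\ S_L(f)\in L^p(\rn)\}$ with respect to $\|S_L(\cdot)\|_{L^p(\rn)}$, to establishing the two-sided estimate
\begin{eqnarray*}
\|S_L(f)\|_{L^p(\rn)}\sim \|f\|_{L^p(\rn)}
\end{eqnarray*}
for all $f\in L^2(\rn)\cap L^p(\rn)$ and $p\in(p_-(L),\,p_+(L))$. The case $p=2$ is the classical quadratic estimate $\|f\|_{L^2(\rn)}^2\sim \int_0^\infty \|(t^{2m}L)e^{-t^{2m}L}f\|_{L^2(\rn)}^2\,\frac{dt}{t}$, which follows from the bounded holomorphic functional calculus of $L$ on $L^2(\rn)$ discussed in Subsection \ref{s2.1}; Fubini then gives $\|f\|_{L^2(\rn)}\sim \|S_L(f)\|_{L^2(\rn)}$.

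For the forward inequality $\|S_L(f)\|_{L^p(\rn)}\lesssim \|f\|_{L^p(\rn)}$ with $p\in(p_-(L),\,p_+(L))$, the plan is to extrapolate the $p=2$ bound via the $m$-$L^p$-$L^p$ off-diagonal estimates for $\{(tL)e^{-tL}\}_{t>0}$ supplied by Proposition \ref{pro Lp semiL0}(iii). Either a Calder\'on-Zygmund decomposition at height $\alpha>0$ (splitting $f=g+\sum_i b_i$, with $g$ bounded and each $b_i$ supported in a cube $Q_i$) combined with the tent-space $T^2$ boundedness of $f\mapsto (t^{2m}Le^{-t^{2m}L}f)_{t>0}$, or Auscher's abstract extrapolation theorem for vertical/conical square functions of semigroups satisfying Davies-Gaffney estimates, transports the $L^2$ bound to the entire interval $(p_-(L),\,p_+(L))$.

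For the reverse inequality $\|f\|_{L^p(\rn)}\lesssim \|S_L(f)\|_{L^p(\rn)}$, the plan is to use a Calder\'on reproducing formula together with the Coifman-Meyer-Stein tent-space pairing. Starting from the $L^2(\rn)$ identity $f=c_m\int_0^\infty (t^{2m}L)^2 e^{-2t^{2m}L}f\,\frac{dt}{t}$ and pairing with $g\in L^2(\rn)\cap L^{p'}(\rn)$, Fubini yields
\begin{eqnarray*}
\langle f,\,g\rangle=c_m\iint_{\rr^{n+1}_+}\lf[(t^{2m}L)e^{-t^{2m}L}f\r](y)\,\overline{\lf[(t^{2m}L^*)e^{-t^{2m}L^*}g\r](y)}\,\frac{dy\,dt}{t}.
\end{eqnarray*}
The tent-space pairing $|\iint F\,\overline{G}\,\frac{dy\,dt}{t}|\lesssim \int_\rn \mathcal{A}(F)(x)\mathcal{A}(G)(x)\,dx$ (with $\mathcal{A}$ as in \eqref{meqnA}) and H\"older's inequality give $|\langle f,\,g\rangle|\lesssim \|S_L(f)\|_{L^p(\rn)}\|S_{L^*}(g)\|_{L^{p'}(\rn)}$. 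Since $(e^{-tL})^*=e^{-tL^*}$ on $L^2(\rn)$, duality of semigroups forces $p'\in(p_-(L^*),\,p_+(L^*))$ whenever $p\in(p_-(L),\,p_+(L))$, so the forward inequality applied to $L^*$ controls $\|S_{L^*}(g)\|_{L^{p'}(\rn)}\lesssim \|g\|_{L^{p'}(\rn)}$. Taking the supremum over $g$ with $\|g\|_{L^{p'}(\rn)}=1$ closes the argument.

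The main obstacle is the forward direction for $p$ near the endpoints of $(p_-(L),\,p_+(L))$: the extrapolation of the $L^2$ square function estimate to $L^p$ must be executed using only the higher-order Davies-Gaffney decay of Proposition \ref{pro Lp semiL0}(iii), rather than the Gaussian pointwise kernel bounds available in the second-order real coefficient setting. Once this is in place, the density of $L^2(\rn)\cap L^p(\rn)$ in both $L^p(\rn)$ and in $\mathbb{H}_L^p(\rn)$ together with the equivalence on this dense subspace identifies the completions, yielding $H_L^p(\rn)=L^p(\rn)$ with equivalent norms.
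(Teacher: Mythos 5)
Your proposal is correct and follows essentially the same route as the paper: the forward bound comes from the $L^p$-boundedness of $S_L$ for $p\in(p_-(L),\,p_+(L))$ (which the paper imports from the literature rather than re-running the off-diagonal extrapolation), and the reverse bound from the Calder\'on reproducing formula paired against $g\in L^2(\rn)\cap L^{p'}(\rn)$ via the tent-space duality, together with $\|S_{L^*}(g)\|_{L^{p'}(\rn)}\lesssim\|g\|_{L^{p'}(\rn)}$. One point deserves care: for $p\in(2,\,p_+(L))$ the space $H_L^p(\rn)$ is defined in Definition \ref{Hardy space aoociated with L0 df} as the dual of $H_{L^*}^{p'}(\rn)$, not as a completion under $\|S_L(\cdot)\|_{L^p(\rn)}$, so in that range the identification with $L^p(\rn)$ should be obtained by dualizing the already-settled case $p'\in(p_-(L^*),\,2)$ for $L^*$ (the paper's one-line ``dual argument'') rather than by the completion reduction you describe; your two-sided square-function estimate remains valid there and is recorded in the paper separately as Proposition \ref{cor HLP}.
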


\begin{proof}
We prove Lemma \ref{lem relation Hardy Lebesgue L0} by borrowing some
ideas from the proof of \cite[Proposition 9.1(v)]{hmm}.
First, from \cite[Propositions 2.10 and 2.13]{ccyy13}, it follows that,
for all $p\in(p_-(L),\,p_+(L))$, $S_{L}$ is bounded on $L^p(\rn)$.
This, together with Definition \ref{Hardy space aoociated with L0 df},
shows that, for all $p\in(p_-(L),\,2]$ and $f\in L^2(\rn)\cap L^p(\rn)$,
\begin{eqnarray*}
\|f\|_{H_{L}^p(\rn)}:=\|S_{L}(f)\|_{L^p(\rn)}\ls \|f\|_{L^p(\rn)},
\end{eqnarray*}
which immediately implies that, for all $p\in(p_-(L),\,2]$,
$$\lf(L^2(\rn)\cap L^p(\rn)\r)\subset \lf(L^2(\rn)\cap H_{L}^p(\rn)\r).$$

On the other hand, recall the following Calder\'on reproducing
formula for $L$ (since $L$ has a bounded holomorphic functional calculus
in $L^2(\rn)$): for all $g\in L^2(\rn)$,
\begin{eqnarray}\label{eqn Calderon repro L0}
g=\wz C\dint_0^\fz(t^{2m} L)^{M+2}e^{-2t^{2m}L}(g)\,\frac{dt}{t}
=:\wz C\pi_{L,\,M}\circ Q_{L,\,1,\,t}(g)
\end{eqnarray}
holds true in $L^2(\rn)$,
where  $\wz C$ is a positive constant such that $\wz C \int_0^\fz
t^{2m(M+2)}e^{-2t^{2m}}\,\frac{dt}{t}=1$, $M\in\nn$ is sufficiently large,
$$\pi_{L,\,M}:=\dint_0^\fz(t^{2m} L)^{M+1}e^{-t^{2m}L}\,\frac{dt}{t}$$
and,  for all $k\in\nn$,
$$Q_{L,\,k,\,t}:=(t^{2m} L)^ke^{-t^{2m}L}.$$

Thus, for $p\in(p_-(L),\,2]$, if $f\in L^2(\rn)\cap H_{L}^p(\rn)$, then,
for all $g\in L^2(\rn)\cap L^{p'}(\rn)$, by \eqref{eqn Calderon repro L0},
duality between $T^p(\rr^{n+1}_+)$ and $T^{p'}(\rr^{n+1}_+)$ with $1/p+1/p'=1$,
and H\"older's inequality, we see that
\begin{eqnarray*}
\lf|\dint_\rn f(x)\overline{g(x)}\,dx\r|&&=\wz C\lf|\dint_\rn \pi_{L,\,M}\circ
Q_{L,\,1,\,t}(f)(x)\overline{g(x)}\,dx\r|\\
&&=\wz C\lf|\iint_{\rr^{n+1}_+}
Q_{L,\,1,\,t}(f)(x)\overline{Q_{L^*,\,M+1,\,t}(g)(x)}\,\frac{dx\,dt}{t}\r|\\
&&\ls \lf\|Q_{L,\,1,\,t}(f)\r\|_{T^p(\rr^{n+1}_+)}
\lf\|Q_{L^*,\,M+1,\,t}(g)\r\|_{T^{p'}(\rr^{n+1}_+)}\\
&&\sim \lf\|f\r\|_{H_{L}^p(\rn)}
\lf\|\lf\{\iint_{\Gamma(\cdot)}\lf|\lf(t^{2m}L^*\r)^{M+1}
e^{-t^{2m}L^*}(g)(y)\r|^2
\frac{dy\,dt}{t^{n+1}}\r\}^{1/2}\r\|_{L^{p'}(\rn)},
\end{eqnarray*}
where $T^p(\rr^{n+1}_+)$ denotes the tent space and
$L^*$ the adjoint operator of $L$ in $L^2(\rn)$.
Since $p'\in [2,\,p_+(L^*))$,
similar to the boundedness of $S_{L}$ on $L^{p'}(\rn)$ for all $p'\in (p_-(L),\,p_+(L))$,
we have
$$\lf\|Q_{L^*,\,M+1,\,t}(g)\r\|_{T^{p'}(\rr^{n+1}_+)}\ls \|g\|_{L^{p'}(\rn)},$$
which, combined with the arbitrariness of $g$, implies that $f\in L^p(\rn)$ and
\begin{eqnarray*}
\|f\|_{L^p(\rn)}\ls \lf\|f\r\|_{H_{L}^p(\rn)},
\end{eqnarray*}
and hence $(L^2(\rn)\cap H_{L}^p(\rn))\subset (L^2(\rn)\cap L^p(\rn))$.
By density, this finishes the proof of Lemma
\ref{lem relation Hardy Lebesgue L0} for $p\in(p_-(L),\,2]$. The case
$p\in[2,\,p_+(L))$ follows from Definition \ref{Hardy space aoociated with L0 df}
and a dual argument, the details being omitted. This finishes the proof of Lemma
\ref{lem relation Hardy Lebesgue L0}.
\end{proof}

Combining Lemma \ref{lem relation Hardy Lebesgue L0},
Propositions \ref{pro bd RieszTFL0 Auscher} and \ref{pro interpolation Hardy L0},
together with the fact that $\nabla^mL^{-1/2}$ is bounded from $H_{L}^p(\rn)$ to
the classical Hardy space $H^p(\rn)$ for all $p\in(\frac{n}{n+m},\,1]$
(see \cite[Theorem 6.2]{cy12}), we conclude the following proposition,
the details being omitted.

\begin{proposition}\label{pro bd RieszTF Hardy space L0}
Let $L$ be as in \eqref{L homo} and satisfy the Ellipticity condition $(\mathcal{E}_0)$.
Then, for all $p\in(\frac{n}{n+m},\,q_+(L))$, $\nabla^mL^{-1/2}$ is bounded from
$H_{L}^p(\rn)$ to $H^p(\rn)$.
\end{proposition}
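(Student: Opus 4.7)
The plan is to cover the interval $(\frac{n}{n+m},\,q_+(L))$ by splitting it into the sub-range $p\in(\frac{n}{n+m},\,1]$, where we invoke the already established $H_L^p$-to-$H^p$ bound, and the sub-range $p\in(1,\,q_+(L))$, where $H_L^p$ and $H^p$ both collapse to $L^p$ and Auscher's Riesz-transform theorem applies, and then glue the two pieces together via the complex interpolation of $H_L^p$ provided by Proposition \ref{pro interpolation Hardy L0} (together with the classical Fefferman--Stein complex interpolation of $H^p$).

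First I would dispose of the endpoint range $p\in(\frac{n}{n+m},\,1]$: by \cite[Theorem 6.2]{cy12} (cited in the paragraph preceding the statement), the Riesz transform $\nabla^mL^{-1/2}$ is bounded from $H_L^p(\rn)$ into the classical Hardy space $H^p(\rn)$ for every such $p$, which directly yields the conclusion on this sub-range.

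Next I would treat the range $p\in(1,\,q_+(L))$. Since $q_+(L)\leq p_+(L)$ and, by Proposition \ref{pro Lp semiL0}(ii), $q_-(L)=p_-(L)\leq 1$, such a $p$ satisfies both $p\in(p_-(L),\,p_+(L))$ and $p\in(q_-(L),\,q_+(L))$. Lemma \ref{lem relation Hardy Lebesgue L0} then identifies $H_L^p(\rn)=L^p(\rn)$ with equivalent norms, while the classical theory gives $H^p(\rn)=L^p(\rn)$; Proposition \ref{pro bd RieszTFL0 Auscher} furnishes the $L^p$-boundedness of $\nabla^mL^{-1/2}$ on this range, so combining these three identifications produces the desired boundedness of $\nabla^mL^{-1/2}$ from $H_L^p(\rn)$ into $H^p(\rn)$. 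The definitions of the operator on the dense subspace $L^2(\rn)$ agree in both regimes, so there is no ambiguity in the extension.

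Finally, should one wish to present the argument uniformly on the whole range $(\frac{n}{n+m},\,q_+(L))$, one picks $p_1\in(\frac{n}{n+m},\,1)$ and $p_2\in(1,\,q_+(L))$ with $p_1<p<p_2$, writes $\frac{1}{p}=\frac{1-\theta}{p_1}+\frac{\theta}{p_2}$ for some $\theta\in(0,\,1)$, and invokes Proposition \ref{pro interpolation Hardy L0} together with the analogous complex interpolation for the classical Hardy spaces to obtain the interpolated boundedness. I expect no serious obstacle here: the only delicacy is to ensure that the $p$-range $(\frac{n}{n+m},\,q_+(L))$ is in fact covered by such $(p_1,\,p_2)$ pairs, which is immediate because $\frac{n}{n+m}<1<q_+(L)$ by Proposition \ref{pro Lp semiL0}(ii) (recall $q_+(L)>2$).
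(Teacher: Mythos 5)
Your proposal is correct and matches the paper's own (very brief) argument: the paper likewise combines \cite[Theorem 6.2]{cy12} for $p\in(\frac{n}{n+m},\,1]$ with Lemma \ref{lem relation Hardy Lebesgue L0} and Proposition \ref{pro bd RieszTFL0 Auscher} for $p\in(1,\,q_+(L))$, citing Proposition \ref{pro interpolation Hardy L0} as well. Your observation that the two sub-ranges already cover the whole interval, so that the interpolation step is optional, is accurate.
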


Now, we establish the generalized square function characterization of $H_L^p(\rn)$,
which is available in \cite{hmm} for $m=1$ and $p\in(0,\,\fz)$
and in \cite{cy12} for $m\in\nn$ and $p\in(0,\,1]$.
Let $p\in(0,\,\fz)$, $\omega\in[0,\,\pi/2)$ be the type of $L$,
$\az\in(0,\,\fz)$, $\bz\in(\frac{n}{2m}(\max\{\frac{1}{p},\,1\}-\frac{1}{2}),\,\fz)$ and
$\psi\in\Psi_{\az,\bz}(S_{\mu}^0)$ with $\mu\in(\omega,\,\pi/2)$, where
$$S_\mu^0:=\lf\{z\in\cc\setminus\{0\}:\ |\arg z|<\mu\r\}$$
and
\begin{eqnarray*}
\Psi_{\az,\bz}(S_\mu^0)&&:=\Big\{f\ \text{is analytic on}\ S_\mu^0:\
\text{there exists a positive constant $C$ such that} \  \\
&& \quad\qquad \lf.|f(\xi)|\le C
\min\{|\xi|^\az,\,|\xi|^{-\bz}\}\ \text{for all}\ \xi\in
S_\mu^0\r\}.
\end{eqnarray*}
For all $f\in L^2(\rn)$ and $(x,\,t)\in \mathbb{R}^{n+1}_{+}$,
define the {\it operator} $Q_{\psi,L}(f)$ by
\begin{eqnarray*}
Q_{\psi,L}(f)(x,\,t):=\psi\lf(t^{2m}L\r)(f)(x).
\end{eqnarray*}

\begin{definition}\label{def SFHS}
Let $p\in(0,\,\fz)$, $L$ be as in \eqref{L homo} and
satisfy the Ellipticity condition $(\mathcal{E}_0)$, $\az\in(0,\,\fz)$,
$\bz\in(\frac{n}{2m}(\max\{\frac{1}{p},\,1\}-\frac{1}{2}),\,\fz)$,
$\mu\in(\omega,\,\pi/2)$ and
\begin{eqnarray*}
\psi\in
\begin{cases}
\Psi_{\az,\bz}(S_{\mu}^0),\ \, &\text{when}\ p\in(0,\,2],\\
\Psi_{\bz,\az}(S_{\mu}^0),\ \,&\text{when}\ p\in(2,\,\fz).
\end{cases}
\end{eqnarray*}
The {\it generalized square
function Hardy space} $H_{\psi,L}^p(\rn)$ is defined as the
completion of the space
\begin{eqnarray*}
\mathbb{H}_{\psi,L}^p(\rn):=\lf\{f\in L^2(\rn):\ Q_{\psi,L}(f)\in
T^p(\mathbb{R}^{n+1}_{+})\r\}
\end{eqnarray*}
with respect to the {\it quasi-norm} $\|f\|_{H_{\psi,L}^p(\rn)}
:=\|Q_{\psi,L}(f)\|_{T^p(\mathbb{R}^{n+1}_{+})}$.
\end{definition}

The following result establishes the generalized square
function characterization of $H_L^p(\rn)$ for $p\in(0,\,\fz)$.

\begin{proposition}\label{pro GSFC}
Let $p\in(0,\,\fz)$, $L$ be as in \eqref{L homo} and
satisfy the Ellipticity condition $(\mathcal{E}_0)$, $\az\in(0,\,\fz)$,
$\bz\in(\frac{n}{2m}(\max\{\frac{1}{p},\,1\}-\frac{1}{2}),\,\fz)$,
$\mu\in(\omega,\,\pi/2)$ and
\begin{eqnarray*}
\psi\in
\begin{cases}
\Psi_{\az,\bz}(S_{\mu}^0),\ \, &\text{when}\ p\in(0,\,2],\\
\Psi_{\bz,\az}(S_{\mu}^0),\ \,&\text{when}\ p\in(2,\,\fz).
\end{cases}
\end{eqnarray*}
Then the Hardy space
$H_L^p(\rn)=H_{\psi,L}^p(\rn)$ with equivalent quasi-norms.
\end{proposition}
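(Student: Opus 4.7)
The plan is to realize both $H_L^p(\rn)$ and $H_{\psi,L}^p(\rn)$ as retracts of the tent space $T^p(\rr^{n+1}_+)$ via a Calder\'on reproducing formula adapted to $L$. Since Definition \ref{Hardy space aoociated with L0 df} together with \eqref{eqn Calderon repro L0} identifies the quasi-norm on $H_L^p(\rn)$ with $\|Q_{\psi_0,L}(\cdot)\|_{T^p(\rr^{n+1}_+)}$ for $\psi_0(z):=z^{M+1}e^{-z}$ and $M$ sufficiently large, it suffices to prove, for any two symbols $\psi_1,\psi_2$ in the appropriate $\Psi$-classes and every $f\in L^2(\rn)$, the equivalence
\begin{eqnarray*}
\lf\|Q_{\psi_1,L}(f)\r\|_{T^p(\rr^{n+1}_+)}\sim \lf\|Q_{\psi_2,L}(f)\r\|_{T^p(\rr^{n+1}_+)};
\end{eqnarray*}
the conclusion then follows by taking $\psi_1=\psi$ and $\psi_2=\psi_0$.

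For $p\in(0,2]$, choose an auxiliary $\tilde\psi_2\in\Psi_{\alpha',\beta'}(S_\mu^0)$ with $\beta'$ exceeding $\frac{n}{2m}(\frac{1}{p}-\frac{1}{2})$, $\alpha'$ arbitrarily large, and $\tilde C\int_0^\infty \psi_2(t)\tilde\psi_2(t)\frac{dt}{t}=1$. The bounded $H^\infty$-functional calculus of $L$ on $L^2(\rn)$ yields the reproducing formula
\begin{eqnarray*}
f=\tilde C\int_0^\infty \tilde\psi_2(t^{2m}L)\psi_2(t^{2m}L)(f)\,\frac{dt}{t}\qquad\text{in }L^2(\rn).
\end{eqnarray*}
Applying $\psi_1(s^{2m}L)$ to both sides exhibits $Q_{\psi_1,L}(f)$ as the image of $Q_{\psi_2,L}(f)$ under the vector-valued integral operator
\begin{eqnarray*}
\pi_{\psi_1,\tilde\psi_2,L}(G)(x,s):=\tilde C\int_0^\infty \psi_1(s^{2m}L)\tilde\psi_2(t^{2m}L)(G(\cdot,t))(x)\,\frac{dt}{t},
\end{eqnarray*}
so the task reduces to the boundedness of $\pi_{\psi_1,\tilde\psi_2,L}$ on $T^p(\rr^{n+1}_+)$. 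For $p\in(0,1]$ this can be done by a tent-space atomic decomposition, verifying that each $T^p$-atom $a$ supported on a Carleson box over a ball $B$ is mapped to a fixed multiple of a $T^p$-molecule, whose annular decay on $S_i(B)\times(0,\fz)$ arises from combining the polynomial decay of $\psi_1$ and $\tilde\psi_2$ at $0$ and $\fz$ with the $m$-Davies--Gaffney bounds of Proposition \ref{pro Gaffney estiamtes L0}; this is exactly where the hypothesis on $\bz$ is used. The range $p\in(1,2]$ then follows by complex interpolation (Proposition \ref{pro interpolation Hardy L0}, together with complex interpolation of tent spaces) between some $p_1\in(0,1]$ and the trivial $L^2$-identity at $p_2=2$.

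For $p\in(2,\fz)$, I would argue by duality. By Definition \ref{Hardy space aoociated with L0 df}, $H_L^p(\rn)=(H_{L^*}^{p'}(\rn))^*$ with $p'\in(1,2)$, and since $\psi\in\Psi_{\bz,\az}(S_\mu^0)$ vanishes to order $\bz>\frac{n}{4m}$ at the origin, one can select $\tilde\psi\in\Psi_{\alpha',\beta'}(S_\mu^0)$ with $\beta'>\frac{n}{4m}$, $\alpha'$ large, and $\tilde C\int_0^\infty \psi(t)\tilde\psi(t)\frac{dt}{t}=1$. The conjugated symbol $\tilde\psi^*(z):=\overline{\tilde\psi(\bar z)}$ lies in $\Psi_{\alpha',\beta'}(S_\mu^0)$ and falls into the $p'\in(0,2]$ regime for $L^*$, already established. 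For $f,g\in L^2(\rn)$, the Calder\'on reproducing formula and $T^p$--$T^{p'}$ tent space duality yield
\begin{eqnarray*}
\lf|\dint_\rn f\,\overline{g}\,dx\r|\ls \lf\|Q_{\psi,L}(f)\r\|_{T^p(\rr^{n+1}_+)}\lf\|Q_{\tilde\psi^*,L^*}(g)\r\|_{T^{p'}(\rr^{n+1}_+)}\sim \lf\|f\r\|_{H_{\psi,L}^p(\rn)}\lf\|g\r\|_{H_{L^*}^{p'}(\rn)},
\end{eqnarray*}
hence $\|f\|_{H_L^p(\rn)}\ls\|f\|_{H_{\psi,L}^p(\rn)}$; the reverse inequality follows identically after swapping $\psi$ with $\psi_0$.

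The principal technical obstacle is the $T^p$-atom to $T^p$-molecule estimate for $\pi_{\psi_1,\tilde\psi_2,L}$ in the $p\in(0,1]$ regime: one must bound the $L^2$ mass of $\pi_{\psi_1,\tilde\psi_2,L}(a)$ on each annulus $S_i(B)$ around the base ball $B$ of $a$, which forces splitting the double $(s,t)$-integration into the four regions determined by whether $s$ and $t$ are below or above $r_B$, and in each region matching the polynomial decay of $\psi_1$ and $\tilde\psi_2$ against the $m$-Davies--Gaffney rate. The higher-order nature of $L$ manifests through the Davies--Gaffney exponent $\frac{2m}{2m-1}$ rather than $2$, giving slower spatial decay to work with, and the lower bound on $\bz$ is calibrated exactly to furnish the molecular decay $2^{-i\epsilon}|2^iB|^{\frac{1}{2}-\frac{1}{p}}$ required by Definition \ref{def molecule L0}. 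Once this is in place, the interpolation to $p\in(1,2]$ and the duality to $p\in(2,\fz)$ are routine via Proposition \ref{pro interpolation Hardy L0} and Lemma \ref{lem relation Hardy Lebesgue L0}.
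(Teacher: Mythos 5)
Your proposal is correct and follows essentially the same route as the paper, which simply defers to \cite[Theorem 5.2]{cy12} for $p\in(0,\,1]$ and to (the higher-order analogue of) \cite[Corollary 4.17]{hmm} for $p\in(1,\,\fz)$ — and those proofs are exactly the scheme you describe: tent-space retract via the Calder\'on reproducing formula, atom-to-molecule estimates from the $m$-Davies--Gaffney bounds for $p\le 1$, interpolation up to $p=2$, and duality for $p>2$. The only cosmetic remark is that the defining symbol in Definition \ref{Hardy space aoociated with L0 df} is $ze^{-z}$ (already in $\Psi_{1,\bz}(S_\mu^0)$ for every $\bz$), so one may compare directly with it rather than with $z^{M+1}e^{-z}$.
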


\begin{proof}
If $p\in(0,\,1]$, Proposition \ref{pro GSFC} is just
\cite[Theorem 5.2]{cy12}, where $\bz\in(\frac{n}{2m}(\frac{1}{p}-\frac{1}{2}),\,\fz)$
is needed to guarantee $H_L^p(\rn)\subset H_{\psi,L}^p(\rn)$, via an application of the Calder\'on
reproducing formula.

If $p\in(1,\,\fz)$ and $m=1$,
Proposition \ref{pro GSFC} is just \cite[Corollary 4.17]{hmm}, where
$\bz\in(\frac{n}{4},\,\fz)$ is used to guarantee
$H_L^p(\rn)\subset H_{\psi,L}^p(\rn)$, via an application of
the Calder\'on reproducing formula. If $p\in(1,\,\fz)$ and $m\in\nn\cap[2,\fz)$,
an argument similar to that used in the proof of \cite[Corollary 4.17]{hmm},
together with an application of the Calder\'on
reproducing formula, also gives us the desired conclusion of
Proposition \ref{pro GSFC}, where we need $\bz\in(\frac{n}{4m},\,\fz)$ to guarantee
$H_L^p(\rn)\subset H_{\psi,L}^p(\rn)$, the details being omitted,
which completes the proof of Proposition \ref{pro GSFC}.
\end{proof}

\section{Proofs of Theorems \ref{non-tangential heat maximal functiona characterization},
\ref{non-tangential heat maximal functiona characterization gradient} and
\ref{thm SFLAF Hardy}, and
Proposition \ref{pro NTMFC2 L0}}\label{s3}

\hskip\parindent In this section, we give the proofs of Theorems
\ref{non-tangential heat maximal functiona characterization},
\ref{non-tangential heat maximal functiona characterization gradient} and
\ref{thm SFLAF Hardy}, and
Proposition \ref{pro NTMFC2 L0}.
To this end, we first establish the following parabolic
Caccioppoli's inequalities, resonating with \cite[Lemma 2.8]{hm09} and,
in a different way, with \cite[Proposition 40]{aq00}.

\begin{proposition}\label{caccioppoli's inequality}
Let $L$ be as in \eqref{L homo} and satisfy the Ellipticity condition
$(\mathcal{E}_0)$, and let $f\in L^2(\rn)$, $t\in(0,\,\fz)$ and $u(x,\,t)
:=e^{-t^{2m}{L}}(f)(x)$ for all $x\in\rn$.
For all $\epsilon\in(0,\,\fz)$, there exist positive constants
$C_{(\epsilon)}$, depending on $\epsilon$, and $C$ such that,
for all $x_0\in\rn$, $r\in(0,\,\fz)$ and $t_0\in(3r,\,\fz)$,
\begin{eqnarray}\label{caccioppoli's inequality1 with epsilon}
&&\dint_{t_0-r}^{t_0+r} \dint_{B(x_0,\,r)}\lf|\nabla^m u(x,\,t)\r|^2
\,dx\,dt\nonumber\\
&&\hs\le \epsilon
\dint_{t_0-2r}^{t_0+2r} \dint_{B(x_0,\,2r)}\lf|\nabla^m u(x,\,t)\r|^2
\,dx\,dt  +\frac{C_{(\epsilon)}}{r^{2m}}
\dint_{t_0-2r}^{t_0+2r} \dint_{B(x_0,\,2r)}\lf|u(x,\,t)\r|^2
\,dx\,dt
\end{eqnarray}
and
\begin{eqnarray}\label{caccioppoli's inequality2 with epsilon}
&&\dint_{t_0-r}^{t_0+r} \dint_{B(x_0,\,r)}\lf|\nabla^m u(x,\,t)\r|^2
\,dx\,dt\le \dsum_{j=0}^{m-1}\frac{C}{r^{2(m-j)}}
\dint_{t_0-2r}^{t_0+2r} \dint_{B(x_0,\,2r)}\lf|\nabla^{j} u(x,\,t)\r|^2
\,dx\,dt,
\end{eqnarray}
where $C_{(\epsilon)}$ and $C$ are independent of $f$.
\end{proposition}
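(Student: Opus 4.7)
The strategy is to exploit the fact that $u(\cdot,t)=e^{-t^{2m}L}(f)$ satisfies the parabolic identity $\pat_t u(\cdot,t)=-2mt^{2m-1}Lu(\cdot,t)$ in $L^2(\rn)$ for $t\in(0,\fz)$ (coming from the holomorphy of the semigroup together with $u(\cdot,t)\in D(L)\subset W^{m,\,2}(\rn)$), and to test this identity against a product cutoff. Choose $\phi\in C_c^\fz(B(x_0,2r))$ with $\phi\equiv 1$ on $B(x_0,r)$ and $\|\nabla^k\phi\|_{L^\fz(\rn)}\ls r^{-k}$ for $0\le k\le m$, and $\chi\in C_c^\fz((t_0-2r,t_0+2r))$ with $\chi\equiv 1$ on $(t_0-r,t_0+r)$ and $\|\chi'\|_{L^\fz(\rr)}\ls r^{-1}$. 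Since $t_0>3r$, on $\supp\chi$ one has $t\in(t_0/3,5t_0/3)$, so $t^{2m-1}$ is equivalent to $t_0^{2m-1}$ with constants depending only on $m$; this is the sole role of the hypothesis $t_0>3r$.

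Pairing the PDE with $\phi^{2m}\chi^2\,\overline{u}$ in $L^2(\rn\times(0,\fz))$ and taking real parts, the time-derivative piece, after an integration by parts in $t$ using $2\Re(\overline{u}\pat_t u)=\pat_t|u|^2$ (boundary terms vanish since $\supp\chi\subset\subset(0,\fz)$), equals $-\iint\phi^{2m}\chi\chi'|u|^2\,dx\,dt$, whose modulus is at most $\frac{C}{r}\int_{t_0-2r}^{t_0+2r}\int_{B(x_0,2r)}|u|^2\,dx\,dt$. The spatial piece becomes $2m\int t^{2m-1}\chi^2\,\Re\,\mathfrak{a}_0(u,\phi^{2m}u)\,dt$, and to extract $\int_{B(x_0,r)}|\nabla^m u|^2$ I decompose
\begin{eqnarray*}
\mathfrak{a}_0(u,\phi^{2m}u)=\mathfrak{a}_0(\phi^m u,\phi^m u)+\mathcal{E},
\end{eqnarray*}
where $\mathcal{E}$ collects Leibniz-rule cross-terms in which at least one spatial derivative has fallen on $\phi^m$ or $\phi^{2m}$. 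The Ellipticity condition $(\mathcal{E}_0)$ gives $\Re\,\mathfrak{a}_0(\phi^m u,\phi^m u)\ge\lz_0\|\nabla^m(\phi^m u)\|_{L^2(\rn)}^2$; writing $\nabla^m(\phi^m u)=\phi^m\nabla^m u+R$ with $\|R\|_{L^2(\rn)}^2\ls\sum_{j=0}^{m-1}r^{-2(m-j)}\|\nabla^j u\|_{L^2(B(x_0,2r))}^2$ and using $|a+b|^2\ge\frac{1}{2}|a|^2-|b|^2$ yields
\begin{eqnarray*}
\Re\,\mathfrak{a}_0(\phi^m u,\phi^m u)\ge\frac{\lz_0}{2}\lf\|\phi^m\nabla^m u\r\|_{L^2(\rn)}^2-C\dsum_{j=0}^{m-1}r^{-2(m-j)}\lf\|\nabla^j u\r\|_{L^2(B(x_0,2r))}^2.
\end{eqnarray*}
The cross-error $\mathcal{E}$ is handled by Cauchy--Schwarz and Young's inequality, absorbing a small constant multiple of $\|\phi^m\nabla^m u\|_{L^2(\rn)}^2$ into the left-hand side and contributing further terms of the form $r^{-2(m-j)}\|\nabla^j u\|_{L^2(B(x_0,2r))}^2$.

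Assembling everything inside the $t$-integral, using $\phi\equiv 1$ on $B(x_0,r)$ and $\chi\equiv 1$ on $(t_0-r,t_0+r)$, and dividing by a factor equivalent to $t_0^{2m-1}$ (legitimate by the weight-equivalence above) delivers \eqref{caccioppoli's inequality2 with epsilon}: the time-derivative remainder becomes $\frac{C}{rt_0^{2m-1}}\iint|u|^2\le\frac{C}{r^{2m}}\iint|u|^2$ because $t_0>3r$, and so is absorbed into the $j=0$ summand on the right. For \eqref{caccioppoli's inequality1 with epsilon}, I apply a Gagliardo--Nirenberg interpolation on $B(x_0,2r)$: for every $0\le j<m$ and every $\dz>0$,
\begin{eqnarray*}
\lf\|\nabla^j u(\cdot,t)\r\|_{L^2(B(x_0,2r))}^2\le\dz\,r^{2(m-j)}\lf\|\nabla^m u(\cdot,t)\r\|_{L^2(B(x_0,2r))}^2+C_{(\dz)}r^{-2j}\lf\|u(\cdot,t)\r\|_{L^2(B(x_0,2r))}^2,
\end{eqnarray*}
which follows from standard Sobolev interpolation and Young's inequality together with scaling. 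Multiplying by $r^{-2(m-j)}$, integrating in $t\in(t_0-2r,t_0+2r)$, summing over $j\in\{0,\ldots,m-1\}$, and choosing $\dz$ small enough furnish \eqref{caccioppoli's inequality1 with epsilon}.

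The principal obstacle is the bookkeeping of Leibniz-rule error terms arising in $\mathcal{E}$ and in the expansion of $\nabla^m(\phi^m u)$: when $m\ge 2$ every intermediate derivative $\nabla^j u$, $0\le j<m$, appears, each weighted by the appropriate negative power of $r$, and every cross-term containing a factor of $\nabla^m u$ must be absorbable into the leading $\|\phi^m\nabla^m u\|_{L^2(\rn)}^2$ via Young's inequality with sufficient slack; tracking the correct $r$-scaling throughout is essential. A secondary point is the Gagliardo--Nirenberg inequality on a ball without boundary hypotheses, which is standard via extension and scaling, but whose $r$-dependence must be tracked carefully.
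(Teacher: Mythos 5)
Your proposal is correct and follows essentially the same route as the paper: test the parabolic identity $\partial_t u=-2mt^{2m-1}Lu$ against a space--time cutoff times $\overline{u}$, integrate by parts in $t$ (where $t_0>3r$ makes the weight $t^{2m-1}$ comparable to $t_0^{2m-1}\gtrsim r^{2m-1}$), apply the G\aa rding inequality from $(\mathcal{E}_0)$ to the cut-off function, absorb the Leibniz cross-terms by Young's inequality, and then deduce \eqref{caccioppoli's inequality1 with epsilon} from \eqref{caccioppoli's inequality2 with epsilon} via the Gagliardo--Nirenberg interpolation \eqref{interpolation of sobolev space}. The only difference is cosmetic (the paper substitutes $\widetilde t=t^{2m}$ before integrating by parts in time, while you keep the weight and use its equivalence to $t_0^{2m-1}$), so no further comment is needed.
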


\begin{proof}
We first prove \eqref{caccioppoli's inequality2 with epsilon}.
To this end, we
introduce two smooth cut-off functions. Let $\eta\in C_c^\fz
(B(x_0,\,2r))$ satisfy $0\le\eta\le 1$,
$\eta\equiv1$ on $B(x_0,\,r)$ and,
for all $k\in\{0,\,\ldots,\,m\}$,
$$\lf\|\nabla^k\eta\r\|_{L^\fz(\rn)}\ls r^{-k}.$$
Let $\gz\in C_c^\fz (t_0-2r,\,t_0+2r)$ satisfy $0\le \gz\le 1$,
$\gz\equiv 1$ on $(t_0-r,\,t_0+r)$
and 
$$\lf\|\pat_t\gz\r\|_{L^\fz(\rr)}\ls \frac{1}{r}.$$
By the properties of $\eta$ and $\gz$, and the
Ellipticity condition $(\mathcal{E}_0)$, we first  write
\begin{eqnarray}\label{0.1}
&&\dint_{t_0-r}^{t_0+r}\dint_{B(x_0,\,r)}\lf|\nabla^m u(x,\,t)\r|^2\,dx\,dt\nonumber\\
&&\nonumber\hs\le\dint_{t_0-2r}^{t_0+2r}
\dint_{\rn}\lf|\nabla^m (u\eta^m)(x,\,t)\r|^2\,dx\,\gz(t)\,dt\\
&&\hs\le\frac{1}{\lz_0}\Re e\lf\{\dint_{t_0-2r}^{t_0+2r}
\dint_{\rn}A(x)\nabla^m (u\eta^m)(x,\,t)\overline{\nabla^m (u\eta^m)}(x,\,t)\,dx\,\gz(t)\,dt\r\}=:\frac{1}{\lz_0}\mathcal{A},
\end{eqnarray}
where
\begin{eqnarray}\label{a-in}
A(x):=\{a_{\az,\,\bz}(x)\}_{|\az|=m=|\bz|}\ \
\text{for all}\ x\in\rn
\end{eqnarray}
is a (properly arranged) coefficient matrix of $L$ so that,
for all $f,\,g\in \dot W^{m,2}(\rn)$ and $x\in\rn$,
\begin{eqnarray*}
A(x) \nabla^m f(x)
\overline{\nabla^m g(x)}:=\dsum_{|\az|=m=|\bz|}
a_{\az,\,\bz}(x)\pat^\bz f(x)\overline{\pat^\az g(x)}.
\end{eqnarray*}

To bound $\mathcal{A}$, let
\begin{eqnarray*}
\mathcal{B}:=\Re e\lf\{\dint_{t_0-2r}^{t_0+2r}
\dint_{\rn}A(x)\nabla^m (u)(x,\,t)\overline{\nabla^m (u\eta^{2m})}(x,\,t)\,dx\,\gz(t)\,dt\r\}.
\end{eqnarray*}
We first bound $\mathcal{B}$. For all $(x,\,\wz t)\in\rr^{n+1}_+$,
let $F(x,\,\wz t):=e^{-\wz tL}(f)(x)\overline{e^{-\wz tL}(f)(x)}[\eta(x)]^{2m}$.
Using $\pat_{\wz t} e^{-\wz tL}=-Le^{-\wz tL}$ and
$$\overline{Le^{-\wz tL}(f)(x)\overline{e^{-\wz tL}(f)(x)}[\eta(x)]^{2m}}
=e^{-\wz tL}(f)(x)\overline{Le^{-\wz tL}(f)(x)}[\eta(x)]^{2m},$$
we know that
\begin{eqnarray*}
\pat_{\wz t} F(x,\,\wz t)&&=-Le^{-\wz tL}(f)(x)\overline{e^{-\wz tL}(f)(x)}[\eta(x)]^{2m}-
e^{-\wz tL}(f)(x)\overline{Le^{-\wz tL}(f)(x)}[\eta(x)]^{2m}\\
&&=-2 \Re e\lf\{Le^{-\wz tL}(f)(x)\overline{e^{-\wz tL}(f)(x)}[\eta(x)]^{2m}\r\},
\end{eqnarray*}
which, together with integration by parts, and the definition of the cut-off function $\gz$,
shows that
\begin{eqnarray}\label{eqn estJ}
\mathcal{B}&&=\frac{1}{\lz_1}\Re e\lf[\dint_{t_0-2r}^{t_0+2r}
\lf\{\dint_{\rn}Le^{-t^{2m}L}(f)(x) \overline{e^{-t^{2m}L}(f)(x)\lf[\eta(x)\r]^{2m}}
\,dx\r\}\gz(t)\,dt\r]\nonumber\\
&&\nonumber=\frac{1}{2m\lz_1}\Re e\lf\{\dint_{(t_0-2r)^{2m}}^{(t_0+2r)^{2m}}
\lf[\dint_{\rn}Le^{-\wz{t}L}(f)(x) \overline{e^{-\wz{t}L}(f)(x)\lf[\eta(x)\r]^{2m}}
\,dx\r]\gz\lf(\wz{t}^{\frac{1}{2m}}\r)
\wz{t}^{\frac{1}{2m}-1}\,d\wz{t}\r\}
\\&&\nonumber=-\frac{1}{4m\lz_1}\dint_{(t_0-2r)^{2m}}^{(t_0+2r)^{2m}}
\pat_{\wz{t}}\lf(\dint_{B(x_0,\,2r)}F(x,\,\wz{t})\,dx\r)
\gz\lf(\wz{t}^{\frac{1}{2m}}\r)\wz{t}^{\frac{1}{2m}-1}\,d\wz{t}\\
&&\nonumber=\frac{1}{4m\lz_1}\dint_{(t_0-2r)^{2m}}^{(t_0+2r)^{2m}}
\lf[\dint_{B(x_0,\,2r)}F(x,\,\wz{t})\,dx\r] \pat_{\wz{t}}
\lf(\gz\lf(\wz{t}^{\frac{1}{2m}} \r) \wz{t}^{\frac{1}{2m}-1}\r)\,d\wz{t}\\
&&\nonumber\hs-\frac{1}{4m\lz_1}\dint_{(t_0-2r)^{2m}}^{(t_0+2r)^{2m}}
\pat_{\wz{t}} \lf(\lf[\dint_{B(x_0,\,2r)} F(x,\,\wz{t})\,dx\r]\gz
\lf(\wz{t}^{\frac{1}{2m}}\r)\wz{t}^{\frac{1}{2m}-1}\r)\,d\wz{t}\\
 &&=\frac{1}{4m\lz_1}\dint_{(t_0-2r)^{2m}}^{(t_0+2r)^{2m}}
 \lf[\dint_{B(x_0,\,2r)}F(x,\,\wz{t})\,dx\r] \pat_{\wz{t}}\lf(\gz
 \lf(\wz{t}^{\frac{1}{2m}} \r)\wz{t}^{\frac{1}{2m}-1}\r)\,d\wz{t}.
\end{eqnarray}
This, combined with the change of variables, the size condition of $\gz$
and $t_0\in(3r,\,\fz)$, implies that
\begin{eqnarray}\label{prov cacci: est J}
\nonumber\lf|\mathcal{B}\r|&&\ls \dint_{(t_0-2r)^{2m}}^{(t_0+2r)^{2m}} \lf\{\dint_{B(x_0,\,2r)}
e^{-\wz{t}L}(f)(x) \overline{e^{-\wz{t}L}(f)(x)}\lf[\eta(x)\r]^{2m}\,dx\r\} \\
&&\nonumber\hs\times\lf[\lf|\pat_{\wz{t}}
\gz\lf(\wz{t}^{\frac{1}{2m}} \r)\r| \wz{t}^{\,2(\frac{1}{2m}-1)}+
\gz\lf(\wz{t}^{\frac{1}{2m}}\r)\wz{t}^{\frac{1}{2m}-2}\r]\,d\wz{t}\\
&&\ls \frac{1}{r^{2m}}\dint_{t_0-2r}^{t_0+2r} \dint_{B(x_0,\,2r)}
\lf|u(x,\,t)\r|^2\,dx\,dt,
\end{eqnarray}
which is desired.

On the other hand, by the definition of $\mathcal{B}$ and Leibniz's rule, we know that
\begin{eqnarray}\label{3.xx4}
\nonumber\mathcal{B}&&= \Re e\lf( \dsum_{|\az|=m=|\bz|}
\dint_{t_0-2r}^{t_0+2r}
\lf\{\dint_{\rn}a_{\az,\,\bz}(x)\pat^\bz u(x,\,t)
\overline{\pat^\az (u\eta^{2m})}(x,\,t)\,dx\r\}\gz(t)\,dt\r)\\
&&\nonumber=\Re e\lf( \dsum_{|\az|=m=|\bz|}
\dint_{t_0-2r}^{t_0+2r}
\lf\{\dint_{\rn}a_{\az,\,\bz}(x)\pat^\bz u(x,\,t)\eta^{m}
\overline{\pat^\az (u\eta^{m})}(x,\,t)\,dx\r\}\gz(t)\,dt\r)\\
&&\nonumber\hs+\Re e\lf( \dsum_{|\az|=m=|\bz|}
\dint_{t_0-2r}^{t_0+2r}
\lf\{\dint_{\rn}a_{\az,\,\bz}(x)\pat^\bz u(x,\,t)\r.\r.\\
&&\hs\hs\times \overline{\lf[\dsum_{\tz\le \xi<\az}C_{(\az,\,\xi)}\pat^\xi
(u\eta^{m})\pat^{\az-\xi}(\eta^m)\r]}(x,\,t)dx\Bigg\}\gz(t)dt\Bigg),
\end{eqnarray}
where $\tz:=(0,\,\ldots,\,0)\in\nn^n$ and,
for each $\az$ and $\xi$, $C_{(\az,\,\xi)}$ is a positive constant depending
on $\az$ and $\xi$, and
$\xi<\az$ means that each component of $\xi$ is not larger than the corresponding
component of $\az$ and $|\xi|<|\az|$.

Before going further, we make the following observation.
For all multi-indices $\gz$ with  $0\le |\gz|\le m$,
by Leibniz's rule, we see that there exists
a smooth function $\eta_{\gz}$ on $\rn$ such that
$\pat^{\gz}(\eta^{m})=\eta^{m-|\gz|} \eta_{\gz}$ and
\begin{eqnarray}\label{3.8xx}
\lf\|\eta_{\gz}\r\|_{L^\fz(\rn)}\ls \frac{1}{r^{|\gz|}}.
\end{eqnarray}
Indeed, if $\gz\equiv (2,\,0,\,\ldots,\,0)$, then we have
\begin{eqnarray*}
\pat^{\gz}(\eta^{m})&&=\pat^2_{x_1}(\eta^{m})=
\pat_{x_1}(m\eta^{m-1}\pat_{x_1}\eta)=m(m-1)\eta^{m-2}\lf(\pat_{x_1}\eta\r)^2
+m\eta^{m-1}\pat_{x_1}^2\eta\\
&&=\eta^{m-2}\lf[m(m-1)\lf(\pat_{x_1}\eta\r)^2+m\eta
\pat_{x_1}^2\eta\r]=:\eta^{m-2} \eta_{\gz},
\end{eqnarray*}
where the fact that $\eta_{\gz}$ satisfies \eqref{3.8xx} is an easy consequence of
properties of $\eta$. The general
cases follow from a similar calculation, the details being omitted. From this fact and
Leibniz's rule again, we deduce that, for each $\az$ and $\bz$ as in
\eqref{3.xx4},
\begin{eqnarray}\label{3.6xx}
\nonumber C_{(\az,\,\xi)}\pat^{\xi}(u\eta^{m})\pat^{\az-\xi}(\eta^m)
&&=C_{(\az,\,\xi)}\dsum_{\tz\le \zeta\le \xi}C_{(\xi,\,\zeta)} \pat^{\xi-\zeta}(\eta^m)\pat^\zeta u
\pat^{\az-\xi}(\eta^m)\\
\nonumber&&=C_{(\az,\,\xi)}\dsum_{\tz\le \zeta\le \xi}
C_{(\xi,\,\zeta)}\eta^{m-|\xi-\zeta|}\eta_{\xi-\zeta}\pat^\zeta u \eta^{m-|\az-\xi|}
\eta_{\az-\xi}\\
\nonumber&&=\eta^m C_{(\az,\,\xi)}\dsum_{\tz\le \zeta\le \xi}
C_{(\xi,\,\zeta)}\eta^{m-|\xi-\zeta|-|\az-\xi|}\eta_{\xi-\zeta} \eta_{\az-\xi}\pat^\zeta u\\
&&=:\eta^m \dsum_{\tz\le \zeta\le \xi}\eta_{\az,\,\zeta}\pat^\zeta u,
\end{eqnarray}
where $\xi\le \az$ means that each component of $\xi$ is not larger than the corresponding
component of $\az$ and, by \eqref{3.8xx}, we see that
\begin{eqnarray}\label{3.7xx}
\lf\|\eta_{\az,\,\zeta}\r\|_{L^\fz(\rn)}\ls \frac{1}{r^{|\az-\zeta|}}.
\end{eqnarray}
Combining \eqref{3.xx4} and \eqref{3.6xx}, we conclude
that
\begin{eqnarray*}
\mathcal{B}&&=\Re e\lf( \dsum_{|\az|=m=|\bz|}
\dint_{t_0-2r}^{t_0+2r}
\lf\{\dint_{\rn}a_{\az,\,\bz}(x)\pat^\bz u(x,\,t)\eta^{m}
\overline{\pat^\az (u\eta^{m})}(x,\,t)\,dx\r\}\gz(t)\,dt\r)\\
&&\nonumber\hs+\Re e\lf( \dsum_{|\az|=m=|\bz|}
\!\dint_{t_0-2r}^{t_0+2r}\!
\lf\{\dint_{\rn}a_{\az,\,\bz}(x)\pat^\bz u(x,\,t)\!\overline{\lf[\dsum_{\tz\le \xi<\az}\eta^m
\dsum_{\tz\le \zeta\le \xi}\eta_{\az,\,\zeta}\pat^\zeta u\r]}\!(x,\,t)\,dx\r\}\gz(t)dt\r),
\end{eqnarray*}
which further implies that
\begin{eqnarray*}
\mathcal{B}&&= \Re e\lf( \dsum_{|\az|=m=|\bz|}
\dint_{t_0-2r}^{t_0+2r}
\lf[\dint_{\rn}a_{\az,\,\bz}(x)\pat^\bz (u\eta^m)(x,\,t)
\overline{\pat^\az (u\eta^{m})}(x,\,t)\,dx\r]\gz(t)\,dt\r)\\
&&\hs-\Re\! e\lf( \dsum_{|\az|=m=|\bz|}
\dint_{t_0-2r}^{t_0+2r}\!
\lf[\dint_{\rn}a_{\az,\,\bz}(x) \dsum_{\tz\le \xi<\bz} C_{(\bz,\,\xi)}\pat^\xi u(x,\,t)
\pat^{\bz-\xi}(\eta^{m})
\overline{\pat^\az (u\eta^{m})}(x,\,t)\,dx\r]\r.\\
&&\hs\hs\times\gz(t)\,dt\Bigg)\\&&
\hs+\Re e\lf(\dsum_{|\az|=m=|\bz|}\!
\dint_{t_0-2r}^{t_0+2r}\!
\lf\{\dint_{\rn}a_{\az,\,\bz}(x)\pat^\bz (u\eta^m)(x,\,t)
\overline{\lf[\dsum_{\tz\le \xi<\az}\dsum_{\tz\le \zeta\le \xi}
\eta_{\az,\,\zeta}\pat^\zeta u\r]}
(x,\,t)\,dx\r\}\gz(t)\,dt\r)\\
&&\hs-\Re e\lf( \dsum_{|\az|=m=|\bz|}\dint_{t_0-2r}^{t_0+2r}
\lf\{\dint_{\rn}a_{\az,\,\bz}(x)\dsum_{\tz\le \xi<\bz} C_{(\bz,\,\xi)}\pat^\xi u(x,\,t)
\pat^{\bz-\xi}(\eta^{m}) \r.\r.\\
&&\hs\hs\times \overline{\lf[\dsum_{\tz\le \xi<\az}\dsum_{\tz\le \zeta\le \xi}
\eta_{\az,\,\zeta}\pat^\zeta u\r]}
(x,\,t)\,dx\Bigg\}\gz(t)\,dt\Bigg).
\end{eqnarray*}
This, together with the definition of $\mathcal{A}$,
H\"older's inequality, \eqref{3.8xx}, \eqref{3.7xx},
\eqref{prov cacci: est J}, H\"older's inequality with $\epsilon$ and
the Ellipticity condition $(\mathcal{E}_0)$, implies that, for all
$\epsilon\in(0,\,\fz)$, there exists a positive constant $C_{(\epsilon)}$ such that
\begin{eqnarray*}
\mathcal{A}&&\ls \mathcal{B}+ \lf\{\dint_{t_0-2r}^{t_0+2r}
\lf[\dint_{\rn}\lf|\nabla^m (u\eta^m)(x,\,t)\r|^2\,dx\r]\gz(t)\,dt\r\}^{1/2}\\
&&\hs\times
\lf\{\dsum_{j=0}^{m-1}\frac{1}{r^{2(m-j)}}\dint_{t_0-2r}^{t_0+2r}
\lf[\dint_{\rn}\lf|\nabla^j u(x,\,t)\r|^2\,dx\r]\gz(t)\,dt\r\}^{1/2}\\
&&\hs+\dsum_{j=0}^{m-1}\frac{1}{r^{2(m-j)}}\dint_{t_0-2r}^{t_0+2r}
\lf[\dint_{\rn}\lf|\nabla^j u(x,\,t)\r|^2\,dx\r]\gz(t)\,dt\\
&&\ls \mathcal{B}+ \epsilon\dint_{t_0-2r}^{t_0+2r}
\lf[\dint_{\rn}\lf|\nabla^m (u\eta^m)(x,\,t)\r|^2\,dx\r]\gz(t)\,dt\\
&&\hs+C_{(\epsilon)} \dsum_{j=0}^{m-1}\frac{1}{r^{2(m-j)}}\dint_{t_0-2r}^{t_0+2r}
\lf[\dint_{B(x_0,\,2r)}\lf|\nabla^j u(x,\,t)\r|^2\,dx\r]\gz(t)\,dt\\
&&\ls \dsum_{j=0}^{m-1}\frac{1}{r^{2(m-j)}}\dint_{t_0-2r}^{t_0+2r}
\lf[\dint_{B(x_0,\,2r)}\lf|\nabla^j u(x,\,t)\r|^2\,dx\r]\gz(t)\,dt+\frac{\epsilon}{\lz_0}\mathcal{A},
\end{eqnarray*}
which, combined with \eqref{0.1}, shows that \eqref{caccioppoli's inequality2 with epsilon}
holds true.

We now turn to the proof of \eqref{caccioppoli's inequality1 with epsilon}.
Recall that, from \cite[Theorem 5.2(3)]{af03} with some slight modifications, we easily
deduce that there exists a positive constant
$C_{(n,\,m)}$, depending only on $n$ and $m$, such that, for all balls $B$,
$f\in W^{m,\,p}(B)$ and $k\in\{0,\,\ldots,\,m\}$,
\begin{eqnarray}\label{interpolation of sobolev space}
\lf\|\nabla^kf\r\|_{L^2(B)}\le C_{(n,\,m)} \lf\|\nabla^mf\r\|_{L^2(B)}^{k/m}
\lf\|f\r\|_{L^2(B)}^{1-k/m},
\end{eqnarray}
which, together with  \eqref{caccioppoli's inequality2 with epsilon}, the interpolation
inequality and Young's inequality with $\epsilon$, immediately implies that
\eqref{caccioppoli's inequality1 with epsilon} holds true.
This finishes the proof of Proposition \ref{caccioppoli's inequality}.
\end{proof}

The following proposition improves Proposition \ref{caccioppoli's inequality}
by removing all the terms with gradients on the right hand side of
Caccioppoli's inequality \eqref{caccioppoli's inequality2 with epsilon},
which is motivated by a recent result of Barton
\cite{ba14}.

\begin{proposition}\label{caccioppoli's inequality2}
Let $L$ be as in \eqref{L homo} and satisfy the Ellipticity condition
$(\mathcal{E}_0)$, and let $f\in L^2(\rn)$, $t\in(0,\,\fz)$ and $u(x,\,t)
:=e^{-t^{2m}{L}}(f)(x)$ for all $x\in\rn$. Then there exists a positive constant $C$
such that, for all $x_0\in\rn$, $r\in(0,\,\fz)$ and $t_0\in(3r,\,\fz)$,
\begin{eqnarray}\label{caccioppoli's inequality3 with epsilon}
&&\dint_{t_0-r}^{t_0+r} \dint_{B(x_0,\,r)}\lf|\nabla^m u(x,\,t)\r|^2
\,dx\,dt\le \frac{C}{r^{2m}}
\dint_{t_0-2r}^{t_0+2r} \dint_{B(x_0,\,2r)}\lf|u(x,\,t)\r|^2
\,dx\,dt.
\end{eqnarray}
\end{proposition}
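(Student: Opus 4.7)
The plan is to upgrade the mixed-order parabolic Caccioppoli inequality \eqref{caccioppoli's inequality2 with epsilon} to the purely gradient-free bound \eqref{caccioppoli's inequality3 with epsilon} via a Gagliardo--Nirenberg interpolation followed by a classical hole-filling iteration, in the spirit of the induction argument of Barton \cite{ba14}.

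First, I would reprove \eqref{caccioppoli's inequality2 with epsilon} with arbitrary pairs of concentric cylinders in place of the specific pair $(Q_r,\,Q_{2r})$. Writing $Q_\rho:=(t_0-\rho,\,t_0+\rho)\times B(x_0,\rho)$, the argument of Proposition \ref{caccioppoli's inequality} with cut-offs $\eta,\,\gamma$ scaled so that $\|\nabla^k\eta\|_{L^\fz(\rn)}\ls(\rho_2-\rho_1)^{-k}$ and $\|\pat_t\gamma\|_{L^\fz(\rr)}\ls(\rho_2-\rho_1)^{-1}$ yields, for all $r\le \rho_1<\rho_2\le 2r$,
\[
\iint_{Q_{\rho_1}}|\nabla^m u|^2\,dx\,dt\le \sum_{j=0}^{m-1}\frac{C}{(\rho_2-\rho_1)^{2(m-j)}}\iint_{Q_{\rho_2}}|\nabla^j u|^2\,dx\,dt.
\]

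To eliminate the intermediate-order terms, I would apply the Gagliardo--Nirenberg-type interpolation \eqref{interpolation of sobolev space} slicewise in $t$ on the ball $B(x_0,\rho_2)$ and then H\"older's inequality in $t$ with conjugate exponents $m/j$ and $m/(m-j)$. For each $j\in\{1,\ldots,m-1\}$ this controls $\iint_{Q_{\rho_2}}|\nabla^j u|^2\,dx\,dt$ by $C\,F(\rho_2)^{j/m}\,G^{(m-j)/m}$, where
\[
F(\rho):=\iint_{Q_\rho}|\nabla^m u|^2\,dx\,dt,\qquad G:=\iint_{Q_{2r}}|u|^2\,dx\,dt,
\]
while the $j=0$ term is already in the desired form. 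Combining this with Young's inequality with a free parameter $\varepsilon>0$ produces the self-improving estimate
\[
F(\rho_1)\le \varepsilon F(\rho_2)+\frac{C_{(\varepsilon)}\,G}{(\rho_2-\rho_1)^{2m}},\qquad r\le \rho_1<\rho_2\le 2r.
\]

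Finally, fixing $\varepsilon\in(0,1)$ sufficiently small and invoking the standard hole-filling iteration lemma on the interval $[r,\,2r]$ yields $F(r)\le C\,G/r^{2m}$, which is precisely \eqref{caccioppoli's inequality3 with epsilon}. The most delicate step is the \emph{a priori} finiteness of $F(\rho)$ for every $\rho\in[r,\,2r]$, which is needed to legitimize the iteration; this follows from the analyticity of $\{e^{-tL}\}_{t>0}$ on $L^2(\rn)$ together with the $m$-Davies--Gaffney bounds of Proposition \ref{pro Gaffney estiamtes L0}, whose quantitative control is ensured by the hypothesis $t_0>3r$ that keeps $Q_{2r}$ uniformly bounded away from $\{t=0\}$.
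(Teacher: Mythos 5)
Your proposal is correct, but it reaches \eqref{caccioppoli's inequality3 with epsilon} by a genuinely different route than the paper. The paper follows Barton's scheme: it proves, by downward induction on $j\in\{m,\ldots,1\}$, the intermediate estimate \eqref{caccioppoli's inequality10 L}, in which the inductive step lowers the order of the derivative on the left-hand side via the Plancherel-type inequality $\|\nabla g\|_{L^2}^2\le\|g\|_{L^2}\|\nabla^2 g\|_{L^2}$ applied to $g=\fai_i\nabla^{j-1}u$, an absorption over geometrically spaced radii $\rho_i$, and the hypothesis at level $j+1$; it then chains these estimates over $m$ nested cylinders between $Q_r$ and $Q_{2r}$. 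You instead keep the order $m$ fixed and remove the intermediate gradients in one stroke: the generalized-radii Caccioppoli inequality (which is precisely the paper's $j=m$ base case, including the observation that $1/t\lesssim 1/(\rho_2-\rho_1)$ because $t_0>3r$), then slicewise Gagliardo--Nirenberg plus H\"older in $t$, Young's inequality, and the classical hole-filling lemma. Your argument is shorter and avoids the nested induction; its cost is reliance on the multiplicative interpolation inequality \eqref{interpolation of sobolev space} on a ball, which as literally stated fails (take $f$ affine and $1\le k<m$) and must be read with the standard additive correction $C\rho^{-k}\|f\|_{L^2(B_\rho)}$ --- harmless here, since that term contributes at most $G/(\rho_2-\rho_1)^{2m}$ after multiplication by $(\rho_2-\rho_1)^{-2(m-j)}$, but worth recording because the proposition is precisely a quantitative statement in $r$; the paper's induction, by contrast, uses only the exact Fourier-side inequality for compactly supported functions. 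Your point about the a priori finiteness of $F(\rho)$ is well taken and is left implicit in the paper (its iteration \eqref{6.12} also silently discards a tail term); the cleanest justification is $\|\nabla^m e^{-t^{2m}L}f\|_{L^2(\rn)}\lesssim t^{-m}\|f\|_{L^2(\rn)}$ via Proposition \ref{pro kato root L0} and holomorphy (or Proposition \ref{pro Lp semiL0}(iv) with $p=q=2$), rather than Proposition \ref{pro Gaffney estiamtes L0}, which controls $(zL)^ke^{-zL}$ but not its spatial gradients.
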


\begin{proof}
We prove this proposition
by borrowing some ideas from the proof of \cite[Theorem 3.10]{ba14}.
We first make the following {\it claim} that,
to finish the proof of Proposition \ref{caccioppoli's inequality2},
we only need to show that,
for all $j\in\{1,\,\ldots,\,m\}$ and {$0<\zeta<\xi\le 2r$},
\begin{eqnarray}\label{caccioppoli's inequality10 L}
\nonumber&&\dint_{t_0-\zeta}^{t_0+\zeta}\dint_{B(x_0,\,\zeta)}
\lf|\nabla^j u(x,\,t)\r|^2\,dx\,dt\\
&&\hs\ls \sum_{k=0}^{j-1}\frac{1}{(\xi-\zeta)^{2(j-k)}} \dint_{t_0-\xi}^{t_0+\xi}
\!\dint_{B(x_0,\,\xi)} \lf|\nabla^k u(x,\,t)\r|^2\,dx\,dt.
\end{eqnarray}
Indeed, if \eqref{caccioppoli's inequality10 L} holds true for all
$j\in\{1,\,\ldots,\,m\}$ and $0<\zeta<\xi<\fz$, then let
$$r=r_0<r_1<r_2<\cdots<r_m=2r$$
be an average decomposition of $(r,\,2r)$ and
\begin{eqnarray}\label{Arj}
A_{s,\,l}:=\dint_{t_0-s}^{t_0+s}\dint_{B(x_0,\,s)} \lf|\nabla^l u(x,\,t)\r|^2\,dx\,dt
\end{eqnarray}
with $l\in\{0,\,\cdots,\,m\}$ and $s\in(0,\,\fz)$. By repetitively using
\eqref{caccioppoli's inequality10 L} with $j\in\{m,\,\ldots,\,1\}$, we know that
\begin{eqnarray*}
\dint_{t_0-r}^{t_0+r}\dint_{B(x_0,\,r)}
\lf|\nabla^m u(x,\,t)\r|^2\,dx\,dt
&&=A_{r_0,\,m}\ls \dsum_{j=0}^{m-1}
\frac{1}{r^{2(m-j)}}A_{r_1,\,j}\\
&&\ls \frac{1}{r^{2m}} A_{r_1,\,0}+
\dsum_{j=1}^{m-1}\frac{1}{r^{2(m-j)}}\dsum_{k=0}^{j-1} \frac{1}{r^{2(j-k)}} A_{r_2,\,k}\\
&&\sim  \dsum_{k=0}^{m-2}\frac{1}{r^{2(m-k)}}  A_{r_2,\,k}
\ls\frac{1}{r^{2m}}A_{2r,\,0},
\end{eqnarray*}
which immediately implies that Proposition \ref{caccioppoli's inequality2} holds true.

We now turn to the proof of \eqref{caccioppoli's inequality10 L}.
{Observe that, if $j=m$, then  \eqref{caccioppoli's inequality10 L} can
be proved by using the same argument as the proof of the parabolic
Caccioppoli's inequality \eqref{caccioppoli's inequality2 with epsilon} with $r$
and $2r$ replaced, respectively, by $\zeta$ and $\xi$,
noticing that the assumption $0<\zeta<\xi\le2r$, together with $t_0\in(3r,\fz)$, implies that, for all
$t\in (t_0-\xi,\,t_0+\xi)$, $\frac{1}{t}<\frac{1}{t_0-\xi}<\frac{1}{r}<\frac{2}{\xi-\zeta}$.}

 Thus, by induction,
to finish the proof of  \eqref{caccioppoli's inequality10 L}, it remains to show that,
if  \eqref{caccioppoli's inequality10 L}  holds true for some $j+1$,
then   \eqref{caccioppoli's inequality10 L} also holds true for $j$.

Now, for all $i\in\zz_+$, let $\{\rho_i\}_{i\in\zz_+}$ be a sequence of increasing numbers satisfying
$$\zeta=\rho_0<\rho_1<\cdots<\xi,$$ $\dz_i:=\rho_{i+1}-\rho_{i}$ and $\wz{\rho}_i:=\rho_i+\frac{\dz_i}{2}$,
where the exact value of $\rho_i$ will be determined later.
Let $\fai_i\in C_c^\fz(B(x_0,\,\wz{\rho}_i))$ satisfy $\supp \fai_i\subset B(x_0,\,\wz{\rho}_i)$,
$\fai_i\equiv 1$ on $B(x_0,\,{\rho}_i)$, $\|\nabla\fai_i\|_{L^\fz(\rn)}\ls \frac{1}{\dz_i}$ and
$\|\nabla^2\fai_i\|_{L^\fz(\rn)}\ls \frac{1}{\dz_i^2}$. By properties of $\fai_i$, the Fourier transform
and H\"older's inequality, we know that, for all $i\in\zz_+$,
\begin{eqnarray*}
&&\dint_{t_0-\rho_i}^{t_0+\rho_i}\dint_{B(x_0,\,\rho_i)}
\lf|\nabla^j u(x,\,t)\r|^2\,dx\,dt\\
&&\hs\le\dint_{t_0-\wz\rho_i}^{t_0+\wz\rho_i}
\dint_{B(x_0,\,\wz\rho_i)}\lf|\nabla\lf(\fai_i\nabla^{j-1} u\r)(x,\,t)\r|^2\,dx\,dt
\\&&\hs\ls\lf\{\dint_{t_0-\wz\rho_i}^{t_0+\wz\rho_i}
\dint_{B(x_0,\,\wz\rho_i)}\lf|\nabla^{j-1} u(x,\,t)\r|^2\,dx\,dt\r\}^{\frac{1}{2}}\\
&&\hs\hs\times
\lf\{\dint_{t_0-\wz\rho_i}^{t_0+\wz\rho_i}
\dint_{B(x_0,\,\wz\rho_i)}\lf|\nabla^2\lf(\fai_i\nabla^{j-1} u\r)(x,\,t)\r|^2\,dx\,dt\r\}^{\frac{1}{2}}\\&&\hs\ls\lf\{\dint_{t_0-\wz\rho_i}^{t_0+\wz\rho_i}
\dint_{B(x_0,\,\wz\rho_i)}\lf|\nabla^{j-1} u(x,\,t)\r|^2\,dx\,dt\r\}^{\frac{1}{2}}\\
&&\hs\hs\times
\lf\{\dint_{t_0-\wz\rho_i}^{t_0+\wz\rho_i}
\dint_{B(x_0,\,\wz\rho_i)}\lf|\lf(\lf|\nabla^{j+1} u\r|+\frac{1}{\dz_i}\lf|\nabla^{j} u\r|+
\frac{1}{\dz_i^2}\lf|\nabla^{j-1} u\r|\r)(x,\,t)\r|^2\,dx\,dt\r\}^{\frac{1}{2}}\\
&&\hs\ls \lf[A_{\wz\rho_i,\,j-1}\r]^{1/2}
\lf[A_{\wz\rho_i,\,j+1}+\frac{1}{\dz_i^2}A_{\wz\rho_i,\,j}+\frac{1}{\dz_i^4}A_{\wz\rho_i,\,j-1}\r]^{1/2},
\end{eqnarray*}
where $A_{\wz\rho_i,\,j+1}$, $A_{\wz\rho_i,\,j}$ and $A_{\wz\rho_i,\,j-1}$ are defined as in \eqref{Arj}.

From the assumption that \eqref{caccioppoli's inequality10 L} holds true for $j+1$ and H\"older's inequality with
$\epsilon$, we further deduce that, for any $\epsilon\in(0,\,\fz)$, there exists a positive constant $C_{(\epsilon)}$
such that
\begin{eqnarray*}
&&\dint_{t_0-\rho_i}^{t_0+\rho_i}\dint_{B(x_0,\,\rho_i)}
\lf|\nabla^j u(x,\,t)\r|^2\,dx\,dt\\
&&\hs\ls \lf[A_{\rho_{i+1},\,j-1}\r]^{\frac{1}{2}}
\lf[\dsum_{k=0}^{j}\frac{1}{\dz_i^{2(j+1-k)}}A_{\rho_{i+1},\,k}
+\frac{1}{\dz_i^2}A_{\wz\rho_i,\,j}+\frac{1}{\dz_i^4}A_{\wz\rho_i,\,j-1}\r]^{\frac{1}{2}}\\
&&\hs\ls \frac{C_{(\epsilon)}}{\dz_i^2}A_{\rho_{i+1},\,j-1}+
\epsilon \dsum_{k=0}^{j}\frac{1}{\dz_i^{2(j-k)}}A_{\rho_{i+1},\,k}
+\epsilon A_{\rho_{i+1},\,j}.
\end{eqnarray*}
By letting $\epsilon$ small enough, we conclude that there exists a positive constant $\wz C$ such that,
for all $i\in\zz_+$,
\begin{eqnarray}\label{6.11}
A_{\rho_{i},\,j}&&=
\dint_{t_0-\rho_i}^{t_0+\rho_i}\dint_{B(x_0,\,\rho_i)}
\lf|\nabla^j u(x,\,t)\r|^2\,dx\,dt
\le \wz C \dsum_{k=0}^{j-1}\frac{1}{\dz_i^{2(j-k)}}A_{\rho_{i+1},\,k}
+\frac{1}{2}A_{\rho_{i+1},\,j}\nonumber\\
&&=: \wz C B_{i+1,\,j}+\frac{1}{2}A_{\rho_{i+1},\,j},
\end{eqnarray}
which immediately implies that
\begin{eqnarray}\label{6.12}
A_{\rho_0,\,j}\le \wz C B_{1,\,j}+\frac{1}{2}A_{\rho_{1,\,j}}\le \wz C B_{1,\,j}+\frac{1}{2}
\lf[\wz C B_{2,\,j}+\frac{1}{2}A_{\rho_{2,\,j}}\r]\le \wz C \dsum_{i=0}^\fz 2^{-i}B_{i+1,\,j}.
\end{eqnarray}

Moreover, for all $i\in\zz_+$, take $\tau\in(2^{-1/(2m)},\,1)$,
$\rho_i:=\zeta+(\xi-\zeta)(1-\tau)\sum_{s=1}^i\tau^s$ and
hence $\dz_i=(\xi-\zeta)(1-\tau)\tau^{i+1}$,
we then have
\begin{eqnarray}\label{6.15}
\nonumber\dsum_{i=0}^\fz 2^{-i}B_{i+1,\,j}&&=\dsum_{i=0}^\fz 2^{-i}
\wz C \dsum_{k=0}^{j-1} \frac{1}{\dz_i^{2(j-k)}}A_{\rho_{i+1},\,k}\\
&&\nonumber\ls \dsum_{i=0}^\fz 2^{-i} \wz C \dsum_{k=0}^{j-1}
\frac{1}{[(\xi-\zeta)(1-\tau)\tau^i]^{2(j-k)}}A_{\rho_{i+1},\,k}\\
&&\ls \dsum_{k=0}^{j-1}
\dsum_{i=0}^\fz \frac{1}{[2\tau^{2(j-k)}]^{i}}
\frac{1}{(\xi-\zeta)^{2(j-k)}}A_{\rho_{i+1},\,k}
\ls \dsum_{k=0}^{j-1} \frac{1}{(\xi-\zeta)^{2(j-k)}}A_{\xi,\,k},
\end{eqnarray}
where the implicit positive constants depend on $m$ and $\tau$, but
are independent of $j$, $\xi$ and $\zeta$.

Combining the estimates \eqref{6.12} and \eqref{6.15},
and using the definition of  $B_{i+1,\,j}$
in \eqref{6.11}, we conclude that
\begin{eqnarray*}
\dint_{t_0-\zeta}^{t_0+\zeta}\dint_{B(x_0,\,\zeta)}
\lf|\nabla^j u(x,\,t)\r|^2\,dx\,dt=A_{\rho_0,\,j}\ls \dsum_{i=0}^\fz 2^{-i}B_{i+1,\,j}
\ls\dsum_{k=0}^{j-1} \frac{1}{(\xi-\zeta)^{2(j-k)}}A_{\xi,\,k},
\end{eqnarray*}
which immediately implies that \eqref{caccioppoli's inequality10 L} holds true for $j$.
Thus, by induction, \eqref{caccioppoli's inequality10 L}  holds true for all $j\in\{1,\,\ldots,\,m\}$,
which completes the proof of Proposition \ref{caccioppoli's inequality2}.
\end{proof}

Now, for $p\in(0,\,p_+(L))$, we want to control the $H_L^p(\rn)$ quasi-norm
by the $L^p(\rn)$ quasi-norm of $S_{h,\,L}$ in \eqref{gradient square function for heat},
via the parabolic Caccioppoli's inequality
\eqref{caccioppoli's inequality3 with epsilon}.
Before going further, we point out that, in the remainder of this section,
including the proofs of Propositions \ref{domination of square functions}
and \ref{domination of heat by maximal},
and Theorem \ref{non-tangential heat maximal functiona characterization},
we borrow some ideas from the corresponding parts of \cite{hm09}, in which the authors
considered the case when $m=1$ and $p=1$.

We first need the following notation. For all $\lz\in(0,\,\fz)$,  $k\in\zz_+$ and
$f\in L^2(\rn)$, let $S^\lz_{L,\,k}(f)$ and $S^\lz_{h,\,L,\,k}(f)$ be the same,
respectively, as in \eqref{eqn KSF} and \eqref{gradient square function for heat}.
For any $0<\epsilon\ll R<\fz$ and $x\in\rn$, let $\bgz^{\epsilon,\,R,\,\lz}(x)$
be the \emph{truncated cone} defined by setting
\begin{eqnarray}\label{contracted cone}
\bgz^{\epsilon,\,R,\,\lz}(x):=\lf\{(y,\,t)\in\rn\times (\epsilon,\,R):\
|x-y|<\lz t\r\}.
\end{eqnarray}
We write $S_{L,\,k}^{\lz} (f)(x)$
and $S_{L,\,h,\,k}^\lz (f)(x)$, respectively, by $S_{L,\,k}^{\epsilon,\,R,\,\lz} (f)(x)$
and $S_{L,\,h,\,k}^{\epsilon,\,R,\,\lz} (f)(x)$ when the cone $\Gamma^\lz(x)$,
in \eqref{eqn KSF} and \eqref{gradient square function for heat}, is replaced by
$\bgz^{\epsilon,\,R,\,\lz}(x)$.

From \cite[Proposition 4]{cms85}, it follows that, for all $k\in\zz_+$,
$\lz\in(0,\,\fz)$, $p\in(0,\,\fz)$ and $f\in L^2(\rn)$,
\begin{eqnarray}\label{3.xx18}
\lf\| S_{h,\,L,\,k}^\lz (f)\r\|_{L^p(\rn)}\sim
\lf\| S_{h,\,L,\,k}(f)\r\|_{L^p(\rn)}
\end{eqnarray}
and
\begin{eqnarray}\label{3.xxx18}
\lf\| S_{L,\,k}^\lz (f)\r\|_{L^p(\rn)}\sim\lf\| S_{L,\,k}(f)\r\|_{L^p(\rn)},
\end{eqnarray}
where the implicit positive constants are independent of $f$.

For $p\in(0,\,\fz)$, we can control the $L^p(\rn)$ quasi-norm of $S_L$ in \eqref{eqn KSF}
by that of $S_{h,\,L}$ in \eqref{gradient square function for heat}
as follows.

\begin{proposition}\label{domination of square functions}
Let $L$ be as in \eqref{L homo} and satisfy the Ellipticity condition
$(\mathcal{E}_0)$, and let $p\in(0,\,\fz)$.
Then there exists a positive constant $C$ such that, for all {$f\in {L}^2(\rn)$},
\begin{eqnarray*}
\lf\|S_{L}(f)\r\|_{L^p(\rn)}\le C \lf\|S_{h,\,L}(f)\r\|_{L^p(\rn)}.
\end{eqnarray*}
\end{proposition}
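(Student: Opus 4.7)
The plan is the classical pointwise-Caccioppoli plus Fubini plus cone-aperture-change scheme, in the spirit of Fefferman--Stein \cite{fs72} and Hofmann--Mayboroda \cite{hm09}, adapted to the higher-order parabolic setting. At each $(y,t)\in\Gamma(x)$, I aim to dominate $|t^{2m}Le^{-t^{2m}L}f(y)|^2$ by an $L^2$-average of $|s^m\nabla^m e^{-s^{2m}L}f(z)|^2$ on a nearby parabolic cylinder, and then exchange the order of integration. Because the comparison is pointwise in $x$ before taking $L^p$-norms, the argument automatically covers every $p\in(0,\infty)$.

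Setting $u(z,s):=e^{-s^{2m}L}f(z)$ and, for a small constant $c\in(0,1/4)$, $Q(y,t):=B(y,ct)\times((1-c)t,(1+c)t)$, the target pointwise inequality takes the form
\begin{align*}
|t^{2m}Lu(y,t)|^2 \le \frac{C}{t^{n+2}}\iint_{Q(y,t)} |s^m\nabla^m u(z,s)|^2 \,dz\,ds.
\end{align*}
To establish it, I would first use interior regularity of the (smooth, semigroup-generated) function $t^{2m}Lu$ to replace the pointwise value on the left by its $L^2$-average over $Q(y,t)$. Since $s\sim t$ throughout $Q(y,t)$, the task reduces to controlling $\iint_{Q(y,t)}|s^{2m}Lu|^2$. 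The semigroup identity $s^{2m}Lu(z,s)=-(s/2m)\,\partial_s u(z,s)$, which follows from $\partial_s e^{-s^{2m}L}=-2m\,s^{2m-1}Le^{-s^{2m}L}$, rewrites this as an estimate on $\iint|s\,\partial_s u|^2$. An integration by parts in $s$, together with the sesquilinear form \eqref{form0} and the Ellipticity condition $(\mathcal{E}_0)$, re-expresses the resulting quantity in terms of spatial derivatives, and then the refined higher-order parabolic Caccioppoli inequality \eqref{caccioppoli's inequality3 with epsilon}, combined with its Barton-style intermediate form \eqref{caccioppoli's inequality10 L} iterated on successively nested cylinders, absorbs the top-order contributions and leaves exactly $\iint|s^m\nabla^m u|^2$ on a slightly enlarged cylinder $Q'(y,t)$.

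Once this pointwise bound is in hand the remainder is routine: since $Q'(y,t)\subset\Gamma^\lambda(x)$ for $(y,t)\in\Gamma(x)$ and some fixed $\lambda>1$, Fubini yields $S_L(f)(x)\le C\,S_{h,L}^\lambda(f)(x)$ pointwise almost everywhere in $\rn$; taking $L^p$-norms and invoking the aperture-change equivalence \eqref{3.xx18}, namely $\|S_{h,L}^\lambda(f)\|_{L^p(\rn)}\sim\|S_{h,L}(f)\|_{L^p(\rn)}$, closes the argument. The hard part is squarely in the pointwise step: the Caccioppoli inequality \eqref{caccioppoli's inequality3 with epsilon} bounds $\nabla^m u$ by $u$, whereas what is actually needed is a bound for $Lu$ (order $2m$) in terms of $\nabla^m u$ (order $m$). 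Bridging this order-$m$ gap forces one to trade $m$ spatial derivatives for a single time derivative through the heat equation $\partial_s u=-2m s^{2m-1}Lu$ and then recycle them back via the ellipticity; correctly handling all the ensuing lower-order remainders in the case $m\ge 2$ is precisely what the iterated Caccioppoli bounds \eqref{caccioppoli's inequality10 L} are designed to do.
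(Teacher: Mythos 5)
Your overall plan---reduce to a local estimate on Whitney cylinders, then use Fubini and the change-of-aperture equivalence \eqref{3.xx18}---is the right skeleton, but the key local step is not established, and in fact the paper's proof is structured precisely to avoid the pointwise domination you are claiming. There are two concrete problems. First, the ``interior regularity'' you invoke to replace the pointwise value of $t^{2m}Le^{-t^{2m}L}f$ by an $L^2$-average over $Q(y,t)$ does not exist in this setting: for complex bounded measurable coefficients, and a fortiori for higher order operators, local boundedness of solutions of the parabolic equation fails (this is exactly why the averaged maximal functions \eqref{eqn EOmax} and \eqref{non-tangential maximal function for heat} are used in the first place). This particular issue could be sidestepped by working with Whitney-box averages from the start, since $S_L$ is itself an integral; but the second problem is fatal to the local scheme. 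When you trade $t^{2m}Lu$ for $\partial_s u$ and integrate by parts (in $s$, or in space via the form \eqref{form0}), the top-order term that reappears on the right is $\iint_{Q'}|s^{2m}Lu|^2$ over a strictly larger cylinder $Q'\supsetneq Q$, with a constant that is not small; since the regions differ, it cannot be absorbed box by box. (For self-adjoint $A$ the offending term is a perfect $s$-derivative and disappears, but not for complex non-self-adjoint coefficients.) Moreover, Caccioppoli-type inequalities such as \eqref{caccioppoli's inequality3 with epsilon} and \eqref{caccioppoli's inequality10 L} run in the opposite direction---they bound higher spatial derivatives by lower ones---so they cannot ``bridge the order-$m$ gap'' from $\nabla^m u$ up to $Lu$; in the paper they are used only in the good direction, to control $S_{h,L,1}$ by $S_L$ in \eqref{estimates of IIm}.

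The paper resolves exactly this absorption problem by \emph{not} proving a pointwise domination $S_L(f)\lesssim S^{\lambda}_{h,L}(f)$. Instead, the integration by parts against the sesquilinear form yields, for every $k\in\zz_+$ and almost every $x$, the geometric-mean inequality
$$S^{2^k}_{L}(f)(x)\le C_0\bigl[S_{h,L}^{2^{k+1}}(f)(x)\bigr]^{1/2}\bigl[S^{2^{k+1}}_{L}(f)(x)\bigr]^{1/2},$$
in which the unabsorbable copy of $S_L$ survives on the right with doubled aperture. The $L^p$ conclusion for all $p\in(0,\fz)$ is then extracted by the iteration Lemma \ref{mlem 1}, whose proof depends on Auscher's quantitative change-of-angle theorem in tent spaces to sum the resulting series of apertures. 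Your proposal contains no substitute for this mechanism, so as written the argument does not close.
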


In what follows, for all $k\in\zz_+$ and suitable functions $H$ on $\rr^{n+1}_+$ and $x\in\rn$, let
\begin{eqnarray}\label{meqnAk}
\mathcal{A}_k(H)(x):=\lf\{\iint_{\bgz^{2^k}(x)}
\lf|H(y,\,t)\r|^{2}\,\frac{dy\,dt}{t^{n+1}}\r\}^{\frac{1}{2}}.
\end{eqnarray}
Observe that $\mathcal{A}_0(H)$ is just the $\mathcal{A}$-functional $\mathcal{A}(H)$
defined as in \eqref{meqnA} with $F$ replaced by $H$.

To prove Proposition \ref{domination of square functions}, we need the following technical
lemma, which is due to  Steve Hofmann (a personal communication with the second author).

\begin{lemma}\label{mlem 1}
{Let $F$, $G\in T^2(\rr^{n+1}_+)$. If there exists a positive constant $C_0$ such that,
for all $k\in \zz_+$ and almost every $x\in\rn$,
\begin{eqnarray}\label{meqn1}
\mathcal{A}_k(F)(x)\le C_0 \lf[\mathcal{A}_{k+1}(G)(x)\r]^{1/2}\lf[\mathcal{A}_{k+1}(F)(x)\r]^{1/2},
\end{eqnarray}
then, for all $p\in(0,\,\fz)$, there exists a positive constant $C_1$, independent of $F$ and $G$,
such that
\begin{eqnarray*}
\|F\|_{T^p(\rr^{n+1}_+)}\le C_1 \|G\|_{T^p(\rr^{n+1}_+)}.
\end{eqnarray*}}
\end{lemma}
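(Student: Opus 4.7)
Set $a_k := \|\mathcal{A}_k(F)\|_{L^p(\rn)}$ and $b_k := \|\mathcal{A}_k(G)\|_{L^p(\rn)}$ for $k\in\zz_+$. The plan is to promote the pointwise hypothesis to an $L^p$ one-step functional inequality linking $a_k$ with $b_{k+1}$ and $a_{k+1}$, and then close the argument with the change-of-aperture estimate for tent spaces.

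First, I would raise the pointwise hypothesis to the power $p$, integrate over $\rn$, and apply the Cauchy--Schwarz inequality to the integrand $\mathcal{A}_{k+1}(G)^{p/2}\cdot\mathcal{A}_{k+1}(F)^{p/2}$, obtaining
\begin{equation*}
a_k^2\le C_0^2\,b_{k+1}\,a_{k+1}\quad\text{for every}\ k\in\zz_+.
\end{equation*}
The Coifman--Meyer--Stein change-of-aperture estimate for tent spaces from \cite{cms85} supplies constants $c>0$ and $\gamma=\gamma(n,p)>0$ such that $\|\mathcal{A}_k(H)\|_{L^p(\rn)}\le c\,2^{k\gamma}\,\|H\|_{T^p(\rr^{n+1}_+)}$ for every $H\in T^p(\rr^{n+1}_+)$. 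Specializing to $k=1$ and dominating $b_1\le c\,2^\gamma\,b_0$ and $a_1\le c\,2^\gamma\,a_0$, the one-step inequality becomes
\begin{equation*}
a_0^2 \le C_0^2\,(c\,2^\gamma\,b_0)(c\,2^\gamma\,a_0) = C^2\,a_0\,b_0,
\end{equation*}
and dividing by $a_0$ (legitimate provided $0<a_0<\fz$) produces the desired bound $\|F\|_{T^p(\rr^{n+1}_+)} \le C_1\|G\|_{T^p(\rr^{n+1}_+)}$.

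The main obstacle is the a priori finiteness $a_0<\fz$, which is what legitimizes the division in the last step. I would handle it by truncation: replace $F$ with $F_R(y,t):=F(y,t)\chi_{E_R}(y,t)$, where $E_R:=\{(y,t)\in\rr^{n+1}_+:R^{-1}<t<R,\ |y|<R\}$ is a compact subset of the open half-space, so that $\mathcal{A}(F_R)$ is bounded and compactly supported in $\rn$ and hence $F_R\in T^p(\rr^{n+1}_+)$ for every $p>0$. Since $\mathcal{A}_k(F_R)\le\mathcal{A}_k(F)$ pointwise, the one-step inequality runs with $F_R$ on the left; combining this with the triangle decomposition $\mathcal{A}_{k+1}(F)\le\mathcal{A}_{k+1}(F_R)+\mathcal{A}_{k+1}(F-F_R)$ on the right, where the tail satisfies $\|F-F_R\|_{T^2(\rr^{n+1}_+)}\to0$ as $R\to\fz$ by dominated convergence, should yield an $R$-independent bound $\|F_R\|_{T^p(\rr^{n+1}_+)}\le C_1\|G\|_{T^p(\rr^{n+1}_+)}$. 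Monotone convergence in $R$ then delivers the claim.
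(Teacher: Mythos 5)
Your opening computation is sound: raising \eqref{meqn1} to the power $p$, integrating, and applying the Cauchy--Schwarz inequality does give $a_0^2\le C_0^2\,b_1\,a_1$, and combining this with one application of the change-of-aperture estimate yields $a_0^2\le C^2 a_0 b_0$. This is in fact more economical than the paper's argument, since it uses the hypothesis only at $k=0$. But everything then hinges on the a priori finiteness $a_0=\|F\|_{T^p(\rr^{n+1}_+)}<\fz$, and the truncation you propose does not supply it. The hypothesis \eqref{meqn1} is an inequality for the fixed pair $(F,G)$ and is not inherited by $(F_R,G)$; as you note, you must keep the full $\mathcal{A}_{k+1}(F)$ on the right and split it as $\mathcal{A}_{k+1}(F_R)+\mathcal{A}_{k+1}(F-F_R)$. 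After integrating, the resulting bound for $\|F_R\|_{T^p(\rr^{n+1}_+)}$ contains a term controlled by $\|\mathcal{A}_{k+1}(F-F_R)\|_{L^p(\rn)}$ (or the cross term $\int_{\rn}[\mathcal{A}_{k+1}(G)]^{p/2}[\mathcal{A}_{k+1}(F-F_R)]^{p/2}\,dx$), and your only control on the tail is in $T^2(\rr^{n+1}_+)$. If $F\notin T^p(\rr^{n+1}_+)$ --- precisely the case you cannot exclude --- then $F-F_R\notin T^p(\rr^{n+1}_+)$ for every $R$, because $F_R\in T^p(\rr^{n+1}_+)$; hence this term is identically $+\fz$, the one-step inequality for $F_R$ is vacuous, and no $R$-independent bound emerges. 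Smallness of $\|F-F_R\|_{T^2(\rr^{n+1}_+)}$ gives no information about the $L^p$ quantity when $p\neq 2$, so the reduction to the finite case is circular. This is a genuine gap, not a technicality: the lemma is applied in the paper exactly in situations where $\|F\|_{T^p(\rr^{n+1}_+)}<\fz$ is what one is trying to prove.

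The paper's proof avoids this by performing the absorption pointwise rather than at the level of $L^p$ norms. It introduces $\mathcal{A}_*(F):=\sum_{k=0}^{\fz}R^{-k}\mathcal{A}_k(F)$ with $R$ large relative to the change-of-aperture constant; the assumption $F\in T^2(\rr^{n+1}_+)$ gives $\|\mathcal{A}_*(F)\|_{L^2(\rn)}<\fz$ and hence $\mathcal{A}_*(F)(x)<\fz$ for almost every $x$, \emph{independently of} $p$. Applying \eqref{meqn1} together with the arithmetic--geometric mean inequality at every $k$ and summing the series yields the pointwise bound $\mathcal{A}_*(F)(x)\le \tfrac{C_0^2R^2}{2}\mathcal{A}_*(G)(x)+\tfrac12\mathcal{A}_*(F)(x)$, where the last term can be absorbed because $\mathcal{A}_*(F)(x)$ is finite a.e.; only after obtaining the pointwise domination $\mathcal{A}_*(F)\le C_0^2R^2\mathcal{A}_*(G)$ does one take $L^p$ norms and invoke the change-of-aperture estimate. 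If you want to keep your structure, you must replace your $L^p$-level absorption by this pointwise one (or otherwise establish $\|F\|_{T^p(\rr^{n+1}_+)}<\fz$ by a mechanism that does not presuppose the conclusion).
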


\begin{proof}
From \cite[Theorem 1.1]{Au11}, it follows that there exists a positive constant
$C_{(n,\,p)}\in[1,\,\fz)$, depending on $n$ and $p$, but being independent of $F$,
such that, for all $F\in T^2(\rr^{n+1}_+)$,
\begin{eqnarray}\label{meqn2}
\lf\|\mathcal{A}_k(F)\r\|_{L^p(\rn)}\le C_{(n,\,p)} \lf\|\mathcal{A}_{k-1}(F)\r\|_{L^p(\rn)}\le
\lf[C_{(n,\,p)}\r]^k \lf\|\mathcal{A}(F)\r\|_{L^p(\rn)}.
\end{eqnarray}
Moreover, $C_{(n,\,p)}\ge C_{(n,\,2)}$ for all $p\in(0,\,\fz)$.

Let $R\in (2C_{(n,\,p)},\,\fz)$ and $\mathcal{A}_*(F):=\sum_{k=0}^\fz \frac{1}{R^k}\mathcal{A}_k(F)$.
By \eqref{meqn2}, we know that
\begin{eqnarray*}
\lf\|\mathcal{A}_*(F)\r\|_{L^2(\rn)}\le \dsum_{k=0}^\fz\frac{1}{R^k}
\lf\|\mathcal{A}_k(F)\r\|_{L^2(\rn)}\le
\dsum_{k=0}^\fz\lf[\frac{C_{(n,\,2)}}{R}\r]^k \lf\|\mathcal{A}(F)\r\|_{L^2(\rn)}= 2
\lf\|F\r\|_{T^2(\rr^{n+1}_+)}<\fz,
\end{eqnarray*}
which immediately implies that $\mathcal{A}_*(F)(x)<\fz$ almost everywhere in $\rn$.

On the other hand, using \eqref{meqn1} and Cauchy's inequality, we find that, for all $k\in\zz_+$
and almost every $x\in\rn$,
\begin{eqnarray*}
\mathcal{A}_k(F)(x)\le \frac{1}{2}C_0^2R \mathcal{A}_{k+1}(G)(x)+
\frac{1}{2R} {\mathcal{A}_{k+1}(F)(x)},
\end{eqnarray*}
which, together with the definition of $\mathcal{A}_*$, shows that
\begin{eqnarray*}
\mathcal{A}_*(F)(x)\le \frac{C_0^2R^2}{2}\mathcal{A}_*(G)(x)+\frac{1}{2}\mathcal{A}_*(F)(x).
\end{eqnarray*}
This, combined with the fact $\mathcal{A}_*(F)(x)<\fz$ almost everywhere
in $\rn$, further implies that
\begin{eqnarray*}
\mathcal{A}_*(F)(x)\le C_0^2R^2\mathcal{A}_*(G)(x).
\end{eqnarray*}
Thus, by the fact that $\sum_{k=0}^\fz\frac{1}{R^k}=\frac{R}{R-1}$,
the definition of the $\mathcal{A}$-functional in \eqref{meqnA}
and \eqref{meqn2},  we have
\begin{eqnarray*}
\|F\|_{T^p(\rr^{n+1}_+)}&&=\|\mathcal{A}(F)\|_{L^p(\rn)}= \frac{R-1}{R}
\lf\|\dsum_{k=0}^\fz \frac{1}{R^k}\mathcal{A}(F)\r\|_{L^p(\rn)}\le \frac{R-1}{R}
\lf\|\dsum_{k=0}^\fz \frac{1}{R^k}\mathcal{A}_k(F)\r\|_{L^p(\rn)}\\
&&\le \frac{R-1}{R}\lf\|\mathcal{A}_*(F)\r\|_{L^p(\rn)}\le \frac{R-1}{R}
\lf\|{C_0^2 R^2}\mathcal{A}_*(G)\r\|_{L^p(\rn)}\\
&&\le \frac{R-1}{R}
\lf\|\dsum_{k=0}^\fz \frac{C_0^2R^2}{R^k}\mathcal{A}_k(G)\r\|_{L^p(\rn)}\le
\frac{(R-1)C_0^2R^2}{R}\dsum_{k=0}^\fz \frac{1}{2^k}\lf\|\mathcal{A}(G)\r\|_{L^p(\rn)}\\
&&\le \frac{2(R-1)C_0^2R^2}{R}\lf\|\mathcal{A}(G)\r\|_{L^p(\rn)}=
\frac{2(R-1)C_0^2R^2}{R}\lf\|G\r\|_{T^p(\rr^{n+1}_+)},
\end{eqnarray*}
which completes the proof of Lemma \ref{mlem 1}.
\end{proof}

With the help of Lemma \ref{mlem 1}, we now prove
Proposition \ref{domination of square functions}.

\begin{proof}[Proof of Proposition \ref{domination of square functions}.]
We begin the proof of this proposition
by first introducing some smooth cut-off functions supported
in truncated cones. For all $0<\epsilon \ll R<\fz$, $\lz\in(0,\,\fz)$
and $x\in\rn$, let
$\bgz^{\epsilon,\,R,\,\lz}(x)$ be the truncated cone defined as in
\eqref{contracted cone}.

Let $\eta\in C_c^\fz(\bgz^{\epsilon/2,\,2R,\,3/2}(x))$ satisfy $\eta\equiv1$
on $\bgz^{\epsilon,\,R,\,1}(x)$, $0\le \eta\le 1$ and, for all $k\in\nn$ with $k\le m$
and $(y,\,t)\in \bgz^{\epsilon/2,\,2R,\,3/2}(x)$,
$$\lf|\nabla^k\eta(y,\,t)\r|\ls \frac{1}{t^{k}}.$$
From the definition of ${L}$ and Minkowski's inequality, we deduce that, for all $x\in\rn$,
\begin{eqnarray}\label{I0-Im}
\nonumber&&\lf\{\iint_{\bgz^{\epsilon,\,R,\,1}(x)} \lf|t^{2m}{L}e^{-t^{2m}{L}}(f)(y)\r|^2
\frac{dy\,dt}{t^{n+1}}\r\}^{\frac{1}{2}}\\
&&\nonumber\hs\le \lf|\iint_{\bgz^{\epsilon/2,\,2R,\,3/2}(x)}
t^{2m}{L}e^{-t^{2m}{L}}(f)(y)
\overline{t^{2m}{L}e^{-t^{2m}{L}}(f)(y)}\eta(y,\,t)\,
\frac{dy\,dt}{t^{n+1}}\r|^{\frac{1}{2}}\\
&&\nonumber\hs= \lf|\dsum_{|\az|=m=|\bz|}
\iint_{\bgz^{\epsilon/2,\,2R,\,3/2}(x)} a_{\az,\,\bz}(y)
t^{m}\pat^{\bz}\lf(e^{-t^{2m}{L}}(f)\r)(y)\r.\\
&&\nonumber\hs\hs\times
\overline{t^m\pat^{\az}\lf(t^{2m}{L}e^{-t^{2m}{L}}(f)\eta\r)(y,\,t)}
\,\frac{dy\,dt}{t^{n+1}}\Bigg|^{\frac{1}{2}}\\
&&\nonumber\hs\ls \dsum_{k=0}^{m}\lf|\dsum_{|\az|=m=|\bz|}
\iint_{\bgz^{\epsilon/2,\,2R,\,3/2}(x)} a_{\az,\,\bz}(y)
t^{m}\pat^{\bz}\lf(e^{-t^{2m}{L}}(f)\r)(y)\r.\\
&&\hs\hs\times\lf.\lf[t^m \dsum_{|\wz\az|=k,\,\wz\az\le \az}
C_{(\az,\,\wz\az)}\overline{\pat^{\wz\az}\lf(t^{2m}{L}e^{-t^{2m}{L}}(f)\r)(y)
\, \pat^{\az-\wz\az}\eta(y,\,t)}\r]\,\frac{dy\,dt}{t^{n+1}}\r|^{\frac{1}{2}}
=:\dsum_{k=0}^{m}\mathrm{I}_k.
\end{eqnarray}

We first bound $\mathrm{I}_0$. By H\"older's inequality,
the size condition of $\eta$ and the Ellipticity condition $(\mathcal{E}_0)$,
we see that, for all $x\in\rn$,
\begin{eqnarray}\label{estimates of I0}
\nonumber\mathrm{I}_0
&&\ls \lf\{\iint_{\bgz^{\epsilon/2,\,2R,\,3/2}(x)}
\lf|t^{m}\nabla^{m}\lf(e^{-t^{2m}{L}}(f)\r)(y)\r|^2\frac{dy\,dt}
{t^{n+1}}\r\}^{\frac{1}{4}}\\
&&\nonumber\hs\times\lf\{\iint_{\bgz^{\epsilon/2,\,2R,\,3/2}(x)}
\lf|t^{2m}{L}e^{-t^{2m}{L}}(f)(y)\r|^2
\frac{dy\,dt}{t^{n+1}}\r\}^{\frac{1}{4}}\\
&&\ls\lf[S^{\epsilon/2,\,2R,\,3/2}_{h,\,L}(f)(x)\r]^{\frac{1}{2}}
\lf[S^{\epsilon/2,\,2R,\,3/2}_{L}(f)(x)\r]^{\frac{1}{2}},
\end{eqnarray}
where $S^{\epsilon/2,\,2R,\,3/2}_{h,\,L}(f)$ and
$S^{\epsilon/2,\,2R,\,3/2}_{L}(f)$
are defined, respectively, similar to $S_{h,\,L}(f)$
in \eqref{gradient square function for heat} and $S_{L}(f)$ in \eqref{eqn KSF},
with $\bgz(x)$ replaced by
$\bgz^{\epsilon/2,\,2R,\,3/2}(x)$.

To bound $\mathrm{I}_m$, similar to \eqref{estimates of I0},
for all $x\in\rn$, we have
\begin{eqnarray}\label{2.x1}
\nonumber\mathrm{I}_m &&\ls \lf\{\iint_{\bgz^{\epsilon/2,\,2R,\,3/2}(x)}
\lf|t^{m}\nabla^{m}\lf(e^{-t^{2m}{L}}(f)\r)(y)\r|^2\frac{dy\,dt}
{t^{n+1}}\r\}^{\frac{1}{4}}\\ \nonumber
&&\hs\times\lf\{\iint_{\bgz^{\epsilon/2,\,2R,\,3/2}(x)}
\lf|t^m\nabla^m\lf(t^{2m}Le^{-t^{2m}{L}}(f)\r)(y)\r|^2
\frac{dy\,dt}{t^{n+1}}\r\}^{\frac{1}{4}}\\ 
&&\sim\lf[S^{\epsilon/2,\,2R,\,3/2}_{h,\,L}(f)(x)\r]^{\frac{1}{2}}
\lf[S^{\epsilon/2,\,2R,\,3/2}_{h,\,L,\,1}(f)(x)\r]^{\frac{1}{2}}.
\end{eqnarray}
To bound $S^{\epsilon/2,\,2R,\,3/2}_{h,\,L,\,1}(f)(x)$, let $Q(z,\,2r)$ be the cube with center $z$
and sidelength $2r$ in $\rr^{n+1}_+$. Write $z:= (z^*,\,t)$ with $z^*\in\rn$
and $t\in(0,\,\fz)$.
Assume that $\{Q(z_j,\,2r_j)\}_{j\in\nn}$ is a covering of
$\bgz^{\epsilon/2,\,2R,\,3/2}(x)$ satisfying
\begin{eqnarray*}
\bgz^{\epsilon/2,\,2R,\,3/2}(x)\subset \bigcup_{j\in\nn}
Q(z_j,\,2r_j)\subset \bigcup_{j\in\nn} Q(z_j,\,4\sqrt nr_j) \subset
\bgz^{\epsilon/4,\,3R,\,2}(x),
\end{eqnarray*}
\begin{eqnarray*}
d\lf(z_j,\,\lf(\bgz^{\epsilon/4,\,3R,\,2}(x)\r)^{\complement}\r)
\sim r_j\sim d\lf(z_j,\,\{t=0\}\r),\hs\hs j\in\nn
\end{eqnarray*}
and the collection $\{B(z_j^*,\,\sqrt n r_j) \times (t_j-\sqrt nr_j,\,t_j+\sqrt nr_j)\}_{j\in\nn}$
has a bounded overlap, where, for all $j\in\nn$, $z_j:=(z_j^*,\,t_j)$.
 This kind of covering is
based on Whitney's decomposition; see \cite[(5.26)]{hm09} for a
covering of similar nature consisting of balls.

It is easy to see that
\begin{eqnarray*}
\bgz^{\epsilon/2,\,2R,\,3/2}(x)&&\subset \bigcup_{j\in\nn}
Q\lf(z_j,\,2r_j\r)\subset \bigcup_{j\in\nn}B\lf(z_j^*,\,\sqrt n r_j\r)
\times \lf(t_j-\sqrt nr_j,\,t_j+\sqrt nr_j\r)\\
&&\subset \bigcup_{j\in\nn} B\lf(z_j^*,\,2\sqrt nr_j\r)
\times \lf(t_j-2\sqrt nr_j,\,t_j+2\sqrt nr_j\r)
\subset \bigcup_{j\in\nn} Q\lf(z_j,\,4\sqrt nr_j\r)\\
&&\subset\bgz^{\epsilon/4,\,3R,\,2}(x).
\end{eqnarray*}
From these and the parabolic Caccioppoli's inequality
\eqref{caccioppoli's inequality3 with epsilon},
we deduce that, for all $0<\epsilon\ll R<\fz$ and $x\in\rn$,
\begin{eqnarray}\label{estimates of IIm}
\nonumber\lf[S^{\epsilon/2,\,2R,\,3/2}_{h,\,L,\,1}(f)(x)\r]^2
&&\ls\dsum_{j\in\nn}\dint_{t_j-\sqrt n r_j}^{t_j+\sqrt n r_j}
\dint_{B(z^*_j,\,\sqrt n r_j)}
\lf|t^m\nabla^m\lf(t^{2m}{L}e^{-t^{2m}{L}}(f)\r)(y)\r|^2
\frac{dy\,dt}{t^{n+1}}\\
&&\nonumber\ls \dsum_{j\in\nn}\frac{1}{r_j^{2m}}
\dint_{t_j-2\sqrt n r_j}^{t_j+2\sqrt n r_j}
\dint_{B(z^*_j,\,2\sqrt n r_j)}
\lf|t^{3m}{L}e^{-t^{2m}{L}}(f)(y)\r|^2
\frac{dy\,dt}{t^{n+1}}\\
&&\ls\iint_{\bgz^{\epsilon/4,\,3R,\,2}(x)}
\lf|t^{2m}{L}e^{-t^{2m}{L}}(f)(y)\r|^2
\frac{dy\,dt}{t^{n+1}}\nonumber\\
&&\sim\lf[S^{\epsilon/4,\,3R,\,2}_{L}(f)(x)\r]^2,
\end{eqnarray}
which, together with letting $\epsilon\to 0$ and $R\to \fz$,
\eqref{3.xx18} and \eqref{3.xxx18},
implies that, for all $q\in (0,\,\fz)$ and $f\in L^2(\rn)$,
\begin{eqnarray}\label{eqn 3.xxx25}
\lf\|S_{h,\,L,\,1}(f)\r\|_{L^q(\rn)}\ls \lf\|S_{L}(f)\r\|_{L^q(\rn)}.
\end{eqnarray}

Moreover, by \eqref{2.x1} and \eqref{estimates of IIm}, we conclude that,
for all $x\in\rn$,
\begin{eqnarray}\label{estimates of Im}
\mathrm{I}_m&&\le \lf[S^{\epsilon/2,\,2R,\,3/2}_{h,\,L}(f)(x)\r]^{\frac{1}{2}}
\lf[S^{\epsilon/4,\,3R,\,2}_{L}(f)(x)\r]^{\frac{1}{2}}.
\end{eqnarray}

We now turn to the estimates of $\mathrm{I}_k$ for all $k\in\{1,\,\ldots,\,m-1\}$.
Again, by H\"older's inequality, the Ellipticity condition $(\mathcal{E}_0)$ and
the size condition of $\eta$, we see that
\begin{eqnarray*}
\mathrm{I}_k&&\ls \lf\{\iint_{\bgz^{\epsilon/2,\,2R,\,3/2}(x)}
\lf|t^{m}\nabla^{m}\lf(e^{-t^{2m}{L}}(f)\r)(y)\r|^2\frac{dy\,dt}
{t^{n+1}}\r\}^{\frac{1}{4}}\\
&&\hs\times\lf\{\iint_{\bgz^{\epsilon/2,\,2R,\,3/2}(x)}
\lf|t^k\nabla^k\lf(t^{2m}{L}e^{-t^{2m}{L}}(f)\r)(y)\r|^2
\frac{dy\,dt}{t^{n+1}}\r\}^{\frac{1}{4}}\\
&&=:\lf[S^{\epsilon/2,\,2R,\,3/2}_{h,\,L}(f)(x)\r]^{\frac{1}{2}}
\times(\mathrm{II}_k)^{\frac{1}{4}}.
\end{eqnarray*}
To bound $\mathrm{II}_k$, using again the interpolation inequality
\eqref{interpolation of sobolev space} and H\"older's inequality, we conclude that
\begin{eqnarray*}
\mathrm{II}_k&&\sim\dint_{\epsilon/2}^{2R}
\lf\|t^{k}\nabla^k\lf(t^{2m}{L}e^{-t^{2m}{L}}(f)\r)\r\|^2_{L^2(B(x,\,(3/2)t))}\,
\frac{dt}{t^{n+1}}\\
&&\ls \dint_{\epsilon/2}^{2R}
\lf\|t^{m}\nabla^m\lf(t^{2m} {L}e^{-t^{2m}{L}}(f)\r)\r\|^{2k/m}_{L^2(B(x,\,(3/2)t))}
\lf\|t^{2m} {L}e^{-t^{2m}{L}}(f)\r\|^{2(m-k)/m}_{L^2(B(x,\,(3/2)t))}\,
\frac{dt}{t^{n+1}}\\
&&\ls \lf\{\dint_{\epsilon/2}^{2R}
\lf\|t^{m}\nabla^m\lf(t^{2m} {L}e^{-t^{2m}{L}}(f)\r)\r\|^{2}_{L^2(B(x,\,(3/2)t))}
\frac{dt}{t^{n+1}} \r\}^{k/m}\\
&&\hs\times\lf\{\dint_{\epsilon/2}^{2R} \lf\|t^{2m} {L}e^{-t^{2m}{L}}
(f)\r\|^{2}_{L^2(B(x,\,(3/2)t))}
\frac{dt}{t^{n+1}}\r\}^{(m-k)/m}\\
&&\sim\lf[S^{\epsilon/2,\,2R,\,3/2}_{h,\,L,\,1}(f)(x)\r]^{2k/m}
\lf[S^{\epsilon/2,\,2R,\,3/2}_{L}(f)(x)\r]^{2(m-k)/m}.
\end{eqnarray*}
From \eqref{estimates of IIm}, we
deduce that
\begin{eqnarray*}
\mathrm{II}_k&&\ls \lf[S^{\epsilon/4,\,3R,\,2}_{L}(f)(x)\r]^{2k/m}
\lf[S^{\epsilon/2,\,2R,\,3/2}_{L}(f)(x)\r]^{2(m-k)/m}.
\end{eqnarray*}
Thus, it holds true that
\begin{eqnarray}\label{estimates of Ik}
\mathrm{I}_k&&\ls \lf[S^{\epsilon/2,\,2R,\,3/2}_{h,\,L}(f)(x)\r]^{\frac{1}{2}}
\lf[S^{\epsilon/4,\,3R,\,2}_{L}(f)(x)\r]^{k/(2m)}
\lf[S^{\epsilon/2,\,2R,\,3/2}_{L}(f)(x)\r]^{(m-k)/(2m)}.
\end{eqnarray}
Combining the estimates of \eqref{I0-Im} through \eqref{estimates of Ik}, $f\in L^2(\rn)$
and letting $\epsilon\to 0$ and $R\to \fz$, we conclude that, for almost every $x\in\rn$,
\begin{eqnarray*}
S_{L}(f)(x)&&\ls \lf[S_{h,\,L}^{3/2}(f)(x)\r]^{\frac{1}{2}}
\lf[S_{L}^{3/2}(f)(x)\r]^{\frac{1}{2}}+ \lf[{S}^{3/2}_{h,\,L}(f)(x)\r]^{\frac{1}{2}}
\lf[S^{2}_{L}(f)(x)\r]^{\frac{1}{2}}\\
&&\hs+\dsum_{k=1}^{m-1}\lf[S_{h,\,L}^{3/2}(f)(x)\r]^{\frac{1}{2}}
\lf[S^{2}_{L}(f)(x)\r]^{\frac{k}{2m}}\lf[S^{3/2}_{L}(f)(x)\r]^{\frac{m-k}{2m}},
\end{eqnarray*}
which immediately shows that there exists a positive constant
$C_0$ such that, for almost every $x\in\rn$,
\begin{eqnarray}\label{estimates of SL2}
S_{L}(f)(x)\le  C_0\lf[ S_{h,\,L}^{2}(f)(x)\r]^{\frac{1}{2}}
\lf[S^{2}_{L}(f)(x)\r]^{\frac{1}{2}}.
\end{eqnarray}

Similarly, by following the same line of the proof of \eqref{estimates of SL2},
we conclude that, for all $k\in\zz_+$ and almost every $x\in\rn$,
\begin{eqnarray*}
S^{2^k}_{L}(f)(x)\le  C_0\lf[ S_{h,\,L}^{2^{k+1}}(f)(x)\r]^{\frac{1}{2}}
\lf[S^{2^{k+1}}_{L}(f)(x)\r]^{\frac{1}{2}},
\end{eqnarray*}
which, combined with the definition of $\mathcal{A}_k$ in \eqref{meqnAk},
implies that, for all $f\in L^2(\rn)$ and almost every $x\in\rn$,
\begin{eqnarray*}
\mathcal{A}_k(F)(x)\le  C_0\lf[ \mathcal{A}_{k+1}(G)(x)\r]^{\frac{1}{2}}
\lf[\mathcal{A}_{k+1}(F)(x)\r]^{\frac{1}{2}},
\end{eqnarray*}
where $F:=t^{2m}Le^{-t^{2m}L}(f)$ and $G:=(t\nabla)^m e^{-t^{2m}L}(f)$.
Moreover, since $f\in L^2(\rn)$, we know that $F$, $G\in T^2(\rr^{n+1}_+)$.
Thus, by Lemma \ref{mlem 1}, we conclude that, for all $p\in (0,\fz)$ and $f\in L^2(\rn)$,
$\|F\|_{T^p(\rr^{n+1}_+)}\ls \|G\|_{T^p(\rr^{n+1}_+)}$, which implies that, for all $p\in (0,\fz)$ and $f\in L^2(\rn)$,
$\|S_L(f)\|_{L^p(\rn)}\ls \|S_{h,\,L}(f)\|_{L^p(\rn)}$ and hence completes the proof of Proposition
\ref{domination of square functions}.
\end{proof}

We also need the boundedness
of $S_{L,\,k}$ and $S_{h,\,L,\,k}$ in $L^q(\rn)$ as follows, which, when $k=1$,
was pointed out in \cite[p.\,68]{au07} without any details.

\begin{lemma}\label{lem 3.3}
Let $L$ be as in \eqref{L homo} and satisfy the Ellipticity condition
$(\mathcal{E}_0)$. Let $S_{L,\,k}$ and $S_{h,\,L,\,k}$ be the same, respectively, as
in \eqref{eqn KSF} and \eqref{gradient square function for heat}. Then
\begin{enumerate}
\item [{\rm (i)}] for all $k\in\nn$ and $q\in (p_-(L),\, p_+(L))$,
there exists a positive constant $C$ such that, for all
$f\in L^2(\rn)\cap L^q(\rn)$,
\begin{eqnarray*}
\lf\|S_{L,\,k} (f)\r\|_{L^q(\rn)}\le C\|f\|_{L^q(\rn)};
\end{eqnarray*}
\item [{\rm (ii)}] for all $k\in\zz_+$ and
$q\in (q_-(L),\, q_+(L))$,
there exists a positive constant $C$ such that, for all
$f\in L^2(\rn)\cap L^q(\rn)$,
\begin{eqnarray*}
\lf\|S_{h,\,L,\,k} (f)\r\|_{L^q(\rn)}\le C\|f\|_{L^q(\rn)}.
\end{eqnarray*}
\end{enumerate}
\end{lemma}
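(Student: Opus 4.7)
The plan is to combine the standard $L^2$-quadratic estimate for $L$ with the $L^p$-$L^2$ off-diagonal bounds from Proposition \ref{pro Lp semiL0}, in direct parallel with the argument of Auscher in \cite{au07} (where the case $k=1$ of part (i) is noted) and of \cite{ccyy13} (cited in the proof of Lemma \ref{lem relation Hardy Lebesgue L0} for the same case). The only novelty relative to the literature is the bookkeeping of the extra factor $(t^{2m}L)^k$ and, for part (ii), the extra gradient factor $(t\nabla)^m$.

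First I would establish the $L^2(\rn)$ bound for both parts. For (i), the symbol $\psi_k(z):=z^{k}e^{-z}$ belongs to $\Psi_{k,N}(S_\mu^0)$ for every $N\in\nn$, and $L$ has a bounded $H^\infty$ functional calculus on $L^2(\rn)$ (recalled just before Proposition \ref{pro kato root L0}); McIntosh's theorem then gives $\|S_{L,k}(f)\|_{L^2(\rn)}\lesssim\|f\|_{L^2(\rn)}$. For (ii), I would apply Kato's equivalence $\|\nabla^m g\|_{L^2(\rn)}\sim\|L^{1/2}g\|_{L^2(\rn)}$ from Proposition \ref{pro kato root L0} to $g:=(t^{2m}L)^{k}e^{-t^{2m}L}(f)$, reducing the $L^2$ bound for $S_{h,L,k}$ to the $L^2$ bound for a square function of the same form as in (i) but with the symbol $\psi_{k+1/2}(z):=z^{k+1/2}e^{-z}$, which is again in the appropriate $\Psi$-class.

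Next I would upgrade to $L^q(\rn)$. For (i) with $q\in(p_-(L),2]$, I would run Auscher's extrapolation scheme: do a Calder\'on--Zygmund type decomposition of $f\in L^{q}(\rn)$ adapted to $L$, bound the good part via the $L^2$ estimate above, and bound the bad part using the $L^p$-$L^2$ off-diagonal estimates of Proposition \ref{pro Lp semiL0}(iii), which hold for every $k\in\zz_+$ with the same $m$-Davies--Gaffney decay. The range $q\in[2,p_+(L))$ then follows by a standard duality argument built on the Calder\'on reproducing formula \eqref{eqn Calderon repro L0}, applied to $L^\ast$ (which satisfies the same Ellipticity condition, so that $p_-(L^\ast)=p_+(L)'$ and $p_+(L^\ast)=p_-(L)'$). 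For (ii), the argument is entirely parallel, except that one uses Proposition \ref{pro Lp semiL0}(iv) in place of (iii); when $k\ge 1$, I would first split $(t^{2m}L)^{k}e^{-t^{2m}L}=(t^{2m}L)^{k}e^{-t^{2m}L/2}\circ e^{-t^{2m}L/2}$ so that the gradient operator $t^m\nabla^m$ acts on a heat-regularized input, to which the off-diagonal bounds of Proposition \ref{pro Lp semiL0}(iv) apply with the required decay.

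The main obstacle is likely the sub-$2$ case of (ii): because $q_+(L)$ may be strictly smaller than $p_+(L)$, one must carefully track which off-diagonal exponents survive the composition of the gradient factor with the semigroup, and ensure that the Calder\'on--Zygmund adapted decomposition used for the good/bad split is still admissible in the narrower range $(q_-(L),q_+(L))$. This requires the full strength of Proposition \ref{pro Lp semiL0}(iv) rather than (iii), but otherwise the argument is a mechanical extension of the $k=1$ case already covered in \cite{au07,ccyy13}.
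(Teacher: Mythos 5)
Your strategy is sound and is, in substance, the same as the paper's — except that the paper does not re-derive any of the extrapolation machinery: it simply observes that Proposition \ref{pro Lp semiL0}(iii) (resp.\ (iv)) places the family $\{(t^{2m}L)^ke^{-t^{2m}L}\}_{t>0}$ (resp.\ the gradient family) under the hypotheses of the general criterion \cite[Theorem 2.13]{bckyy13} for $L^q$ bounds of such square functions, and cites that theorem for both (i) and (ii). What you have written is essentially a reconstruction of the proof of that criterion ($L^2$ quadratic estimate via the $H^\infty$ functional calculus, plus Blunck--Kunstmann/Auscher extrapolation from the off-diagonal bounds), so the two routes buy the same thing; yours is self-contained, the paper's is shorter. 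One step in your sketch needs care: for $q\in(2,p_+(L))$ the bound $\|S_{L,k}(f)\|_{L^q(\rn)}\lesssim\|f\|_{L^q(\rn)}$ does not reduce by duality to part (i) for $L^*$ on $L^{q'}(\rn)$, since $S_{L,k}$ is sublinear. One must first linearize through tent spaces, writing $\|S_{L,k}(f)\|_{L^q(\rn)}=\|F\|_{T^q(\rr^{n+1}_+)}$ with $F(y,t):=(t^{2m}L)^ke^{-t^{2m}L}(f)(y)$; tent-space duality then reduces the claim to a $T^{q'}(\rr^{n+1}_+)\to L^{q'}(\rn)$ bound for the adjoint operator $G\mapsto\int_0^\fz(t^{2m}L^*)^ke^{-t^{2m}L^*}(G(\cdot,t))\,\frac{dt}{t}$, which is a separate (though standard) estimate proved again from the off-diagonal bounds for $L^*$ — it is not the statement of (i) for $L^*$. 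Alternatively one applies the Auscher--Coulhon--Duong--Hofmann good-$\lambda$ criterion directly to the sublinear operator on the range $(2,p_+(L))$. With that step repaired, your argument is complete and matches the content of the cited theorem.
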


\begin{proof}
Observe that, by Proposition \ref{pro Lp semiL0}(iii),
we know that $L$ satisfies all the assumptions of \cite[Theorem 2.13]{bckyy13}
and, as a consequence, we obtain (i) of Lemma \ref{lem 3.3}.

The proof of (ii) of this lemma is similar to that of (i). We only need to
replace the $m$-$L^p$-$L^q$ off-diagonal estimates from Proposition \ref{pro Lp semiL0}(iii),
in the proof of \cite[Theorem 2.13]{bckyy13}, by the corresponding
$m$-$L^p$-$L^q$ off-diagonal estimates of the gradient semigroups
from Proposition \ref{pro Lp semiL0}(iv),
the details being omitted. This finishes
the proof of Lemma \ref{lem 3.3}.
\end{proof}

The next proposition presents an equivalence between the $H_L^p(\rn)$ norm,
defined via the square function $S_L$,
and the $L^p(\rn)$ norm when $p\in(p_+(L),\,p_+(L))$. Recall that
this conclusion was pointed out in \cite[p.\,68]{au07} without giving
any details.

\begin{proposition}\label{cor HLP}
Let $L$ be as in \eqref{L homo} and satisfy the Ellipticity condition
$(\mathcal{E}_0)$, and let $p\in(p_-(L),\,p_+(L))$, with $p_-(L)$ and $p_+(L)$ as in
Proposition \ref{pro Lp semiL0}, and $S_{L}$  be  as
in \eqref{eqn KSF} with $k=1$ and $\lz=1$.
Then there exists a positive constant $C$ such that,
for all $f\in L^2(\rn)\cap L^p(\rn)$,
\begin{eqnarray}\label{eqn cor}
\frac{1}{C}\lf\|f\r\|_{L^p(\rn)}\le \lf\|S_{L}(f)\r\|_{L^p(\rn)}\le C\lf\|f\r\|_{L^p(\rn)}.
\end{eqnarray}
\end{proposition}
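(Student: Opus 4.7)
The upper bound $\|S_L(f)\|_{L^p(\rn)}\lesssim \|f\|_{L^p(\rn)}$ for every $p\in(p_-(L),p_+(L))$ is immediate from Lemma \ref{lem 3.3}(i) applied with $k=1$, so the whole content of the proposition reduces to the reverse estimate. For that, I would mirror the duality argument already employed in the proof of Lemma \ref{lem relation Hardy Lebesgue L0}, but run it uniformly in $p$ (rather than only in $(p_-(L),2]$), so that no reference to the dual definition of $H_L^p$ for $p>2$ is required.

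Fix $f\in L^2(\rn)\cap L^p(\rn)$ and pair it against an arbitrary $g\in L^2(\rn)\cap L^{p'}(\rn)$, where $1/p+1/p'=1$; this is legal because $p>p_-(L)\ge 1$, so $p'\in(1,\fz)$. Applying the Calder\'on reproducing formula \eqref{eqn Calderon repro L0} to $f$ and moving the operator $\pi_{L,M}$ onto $g$ via its $L^2(\rn)$ adjoint produces
\begin{eqnarray*}
\dint_\rn f(x)\overline{g(x)}\,dx
=\wz C\iint_{\rr^{n+1}_+}\lf(t^{2m}L\r)e^{-t^{2m}L}(f)(x)\,
\overline{\lf(t^{2m}L^*\r)^{M+1}e^{-t^{2m}L^*}(g)(x)}\,\frac{dx\,dt}{t},
\end{eqnarray*}
where $M\in\nn$ is chosen sufficiently large. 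The tent-space duality between $T^p(\rr^{n+1}_+)$ and $T^{p'}(\rr^{n+1}_+)$ from \cite{cms85}, combined with H\"older's inequality, then gives
\begin{eqnarray*}
\lf|\dint_\rn f(x)\overline{g(x)}\,dx\r|
\ls \lf\|S_L(f)\r\|_{L^p(\rn)}\lf\|S_{L^*,\,M+1}(g)\r\|_{L^{p'}(\rn)}.
\end{eqnarray*}

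The only remaining point is to absorb the dual factor into $\|g\|_{L^{p'}(\rn)}$. By the standard duality $(e^{-tL})^*=e^{-tL^*}$ on $L^2(\rn)$, one has $q\in(p_-(L),p_+(L))$ if and only if $q'\in(p_-(L^*),p_+(L^*))$; hence $p\in(p_-(L),p_+(L))$ forces $p'\in(p_-(L^*),p_+(L^*))$. Lemma \ref{lem 3.3}(i) applied to the adjoint operator $L^*$ with order $k=M+1$ therefore yields $\|S_{L^*,\,M+1}(g)\|_{L^{p'}(\rn)}\ls\|g\|_{L^{p'}(\rn)}$, and taking the supremum over all admissible $g$ recovers $\|f\|_{L^p(\rn)}\ls\|S_L(f)\|_{L^p(\rn)}$, completing \eqref{eqn cor}.

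The main technical point, really the only non-bookkeeping step, is the invocation of Lemma \ref{lem 3.3}(i) for the higher-order vertical square function $S_{L^*,\,M+1}$ at the dual exponent $p'$; this in turn rests on the $m$-$L^p$-$L^q$ off-diagonal estimates for $(tL^*)^ke^{-tL^*}$ coming from Proposition \ref{pro Lp semiL0}(iii) (which hold for $L^*$ exactly as for $L$). Once this input is in place, the single Calder\'on-reproducing/tent-space-duality passage handles both regimes $p\le 2$ and $p\ge 2$ at once, which is why the proposition can be stated on the full interval $(p_-(L),p_+(L))$ without ever appealing to Definition \ref{Hardy space aoociated with L0 df} for $p>2$.
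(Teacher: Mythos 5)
Your proposal is correct and follows essentially the same route as the paper: the upper bound via Lemma \ref{lem 3.3}(i) with $k=1$, and the lower bound via the Calder\'on reproducing formula, tent-space (Fubini/H\"older) duality, and the $L^{p'}$-boundedness of the dual square function of $L^*$ from Lemma \ref{lem 3.3}(i). The only cosmetic difference is that the paper takes $M=0$ in \eqref{eqn Calderon repro L0}, so the dual factor is $S_{L^*}$ rather than $S_{L^*,\,M+1}$; both are covered by Lemma \ref{lem 3.3}(i).
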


\begin{proof}
It is easy to see that the second inequality of \eqref{eqn cor} is a direct consequence of
Lemma \ref{lem 3.3}(i) in the case when $k=1$.

We now prove the first inequality of \eqref{eqn cor}. Let $f\in L^2(\rn)\cap L^p(\rn)$.
For all $p\in(p_-(L),\,p_+(L))$ and
$g\in L^2(\rn)\cap L^{p'}(\rn)$ satisfying $\|g\|_{L^{p'}(\rn)}\le 1$
with $\frac{1}{p}+\frac{1}{p'}=1$,
by the Calder\'on reproducing formula \eqref{eqn Calderon repro L0}
with $M=0$, duality, Fubini's theorem, H\"older's inequality
and Lemma \ref{lem 3.3}(i), we know that
\begin{eqnarray}\label{eqn DA}
\nonumber\lf|\langle f,\,g \rangle_{L^2(\rn)}\r|&&\sim\lf|\lf\langle \dint_{0}^\fz
\lf(t^{2m}L\r)^2e^{-2t^{2m}L}(f)\,\frac{dt}{t},\,g \r\rangle_{L^2(\rn)}\r|\\
&&\nonumber\sim\lf|\dint_{0}^\fz\lf\langle
t^{2m}Le^{-t^{2m}L}(f),\,t^{2m}L^*e^{-t^{2m}L^*}(g) \r\rangle_{L^2(\rn)}\,\frac{dt}{t}\r|\\
&&\sim\lf\|S_L(f)\r\|_{L^p(\rn)}\lf\|S_{L^*}(g)\r\|_{L^{p'}(\rn)}
\ls\lf\|S_L(f)\r\|_{L^p(\rn)}
\end{eqnarray}
and hence
\begin{eqnarray*}
\lf\|f\r\|_{L^p(\rn)}\ls \|S_{L}(f)\|_{L^p(\rn)}.
\end{eqnarray*}
This finishes the  proof of the first inequality of \eqref{eqn cor} and hence
Proposition \ref{cor HLP}.
\end{proof}

\begin{remark}\label{mod rem1}
For all $p\in(0,\,\fz)$, let $\mathbb{H}^p_L(\rn)$ be the space defined
as in Definition \ref{Hardy space aoociated with L0 df}. From Proposition \ref{cor HLP} and
the definition of $\mathbb{H}^p_L(\rn)$, it follows that, for all $p\in(p_-(L),\,p_+(L))$,
$$\mathbb{H}^p_L(\rn)=L^2(\rn)\cap L^p(\rn).$$
\end{remark}

With the help of Propositions \ref{domination of square functions} and
\ref{cor HLP}, we obtain the following corollary.

\begin{corollary}\label{pro domination of square functions K}
Let $L$ be as in \eqref{L homo} and satisfy the Ellipticity condition
$(\mathcal{E}_0)$.
Then, for all $p\in(0,\,p_+(L))$,
there exists a positive constant $C_{(p)}$, depending on $p$, such that, for all $f\in L^2(\rn)$,
\begin{eqnarray}\label{slk}
\lf\|f\r\|_{H_L^p(\rn)}\le C_{(p)} \lf\|S_{h,\,L}(f)\r\|_{L^p(\rn)}.
\end{eqnarray}
\end{corollary}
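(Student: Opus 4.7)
The plan is to split the argument according to whether $p\in(0,2]$ or $p\in(2,p_+(L))$, since Definition \ref{Hardy space aoociated with L0 df} treats these two regimes differently, and then combine the already-proved pieces (Proposition \ref{domination of square functions}, Lemma \ref{lem relation Hardy Lebesgue L0}, and Proposition \ref{cor HLP}) rather than arguing from scratch.

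For $p\in(0,2]$, by Definition \ref{Hardy space aoociated with L0 df} one has $\|f\|_{H_L^p(\rn)}=\|S_L(f)\|_{L^p(\rn)}$, and Proposition \ref{domination of square functions} applied at exponent $p$ delivers $\|S_L(f)\|_{L^p(\rn)}\ls \|S_{h,L}(f)\|_{L^p(\rn)}$; this is precisely \eqref{slk}, so the case is immediate.

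For $p\in(2,p_+(L))$, I would first invoke Lemma \ref{lem relation Hardy Lebesgue L0} to obtain $\|f\|_{H_L^p(\rn)}\sim\|f\|_{L^p(\rn)}$, which reduces matters to dominating the $L^p$-norm by $\|S_{h,L}(f)\|_{L^p(\rn)}$. To control $\|f\|_{L^p(\rn)}$ by $\|S_L(f)\|_{L^p(\rn)}$, I would repeat the duality argument of \eqref{eqn DA} in the proof of Proposition \ref{cor HLP}: for any $g\in L^2(\rn)\cap L^{p'}(\rn)$ with $\|g\|_{L^{p'}(\rn)}\le1$, apply the Calderón reproducing formula \eqref{eqn Calderon repro L0} with $M=0$ to write $\langle f,g\rangle_{L^2(\rn)}$ as an integral of $\langle t^{2m}Le^{-t^{2m}L}(f),\,t^{2m}L^*e^{-t^{2m}L^*}(g)\rangle_{L^2(\rn)}$ against $dt/t$, use $T^p$–$T^{p'}$ duality via Fubini, and invoke the $L^{p'}$-boundedness of $S_{L^*}$ from Lemma \ref{lem 3.3}(i). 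The admissibility $p'\in(p_-(L^*),2)$ follows from $p_-(L^*)=(p_+(L))'$ together with $p<p_+(L)$. Taking the supremum over $g$ yields $\|f\|_{L^p(\rn)}\ls\|S_L(f)\|_{L^p(\rn)}$; a final application of Proposition \ref{domination of square functions} at exponent $p$ gives $\|S_L(f)\|_{L^p(\rn)}\ls\|S_{h,L}(f)\|_{L^p(\rn)}$, closing the chain.

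There is no real obstacle here: all the hard work sits inside Proposition \ref{domination of square functions} (whose proof, via the Caccioppoli inequality and Lemma \ref{mlem 1}, delivers the key $S_L\ls S_{h,L}$ estimate at every $p\in(0,\fz)$) and inside Lemma \ref{lem relation Hardy Lebesgue L0} plus Proposition \ref{cor HLP} (which together give $H_L^p(\rn)=L^p(\rn)$ and the reverse inequality $\|f\|_{L^p}\ls\|S_L(f)\|_{L^p}$ on the interval $(p_-(L),p_+(L))$). The only point requiring care is tracking the dual exponent range for $S_{L^*}$ in the $p>2$ case, which is handled by the identity $p_-(L^*)=(p_+(L))'$.
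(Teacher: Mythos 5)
Your proposal is correct and follows essentially the same route as the paper: for $p\in(0,\,2]$ it combines Definition \ref{Hardy space aoociated with L0 df} with Proposition \ref{domination of square functions}, and for $p\in(2,\,p_+(L))$ it combines Lemma \ref{lem relation Hardy Lebesgue L0}, Proposition \ref{cor HLP} (whose duality argument you simply re-run rather than cite) and Proposition \ref{domination of square functions}. The only addition is your explicit check of the dual exponent range via $p_-(L^*)=(p_+(L))'$, which is a correct detail the paper leaves implicit.
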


\begin{proof}
If $p\in(0,\,2]$,
Corollary \ref{pro domination of square functions K} is an immediately
consequence of Proposition \ref{domination of square functions} and
Definition \ref{Hardy space aoociated with L0 df}.

If $p\in(2,\,p_+(L))$, Corollary \ref{pro domination of square functions K}
follows from Lemma \ref{lem relation Hardy Lebesgue L0}
and Propositions \ref{domination of square functions} and
\ref{cor HLP}. This finishes the proof of Corollary
\ref{pro domination of square functions K}.
\end{proof}

The next proposition shows that the $L^p(\rn)$ quasi-norm of $S_{h,\,L}$,
as in \eqref{gradient square function for heat} with $\lz=1$,
can be controlled by that of the non-tangential maximal function
$\mathcal{N}^\gz_{h,\,L}$ as in
\eqref{non-tangential maximal function for heat}.

\begin{proposition}\label{domination of heat by maximal}
Let $L$ be as in \eqref{L homo} and satisfy the
Strong ellipticity condition $(\mathcal{E}_1)$, and let {$p\in(0,\,2)$}.
Then there exist positive constants $\gz$ and $C$ such that, for all $f\in L^2(\rn)$,
\begin{eqnarray*}
\lf\|S_{h,\,L}(f)\r\|_{L^p(\rn)}\le C \lf\|\mathcal{N}^\gz_{h,\,L}(f)\r\|_{L^p(\rn)}.
\end{eqnarray*}
\end{proposition}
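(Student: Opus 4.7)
The plan is to adapt the Fefferman--Stein/Hofmann--Mayboroda good-$\lz$ strategy to the $2m$-th order setting, using in a crucial way the improved parabolic Caccioppoli inequality \eqref{caccioppoli's inequality3 with epsilon}. For each $\sigma\in(0,\fz)$, set $E_\sigma:=\{x\in\rn:\ \mathcal{N}^\gz_{h,L}(f)(x)\le\sigma\}$, let $E^*_\sigma\subset E_\sigma$ denote the subset of $E_\sigma$ at which the Lebesgue density of $E_\sigma$ (at every scale) is at least $1/2$, and, for small $\alpha,\epsilon\in(0,\fz)$, form the truncated saw-tooth region
\begin{eqnarray*}
\mathcal{R}^{\alpha\epsilon,\alpha\epsilon,1/\alpha}(E^*_\sigma):=\bigcup_{x\in E^*_\sigma}\bgz^{\alpha\epsilon,\epsilon/\alpha,1/\alpha}(x),
\end{eqnarray*}
with the truncated cones as in \eqref{contracted cone}. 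The goal is to establish
\begin{eqnarray*}
\dint_{E^*_\sigma}\lf[S^{\alpha\epsilon,\epsilon/\alpha,1/\alpha}_{h,L}(f)(x)\r]^2\,dx
\ls\sigma^2|E^*_\sigma|,
\end{eqnarray*}
with implicit constant independent of $\sigma$ and $\epsilon$; after using \eqref{3.xx18} to remove the truncation, letting $\epsilon\to 0^+$, and running a standard distribution-function (good-$\lz$) argument, this yields $\|S_{h,L}(f)\|_{L^p(\rn)}\ls\|\mathcal{N}^\gz_{h,L}(f)\|_{L^p(\rn)}$ for every $p\in(0,2)$.

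By Fubini's theorem, and using that the density hypothesis forces $|B(y,t/\alpha)\cap E^*_\sigma|\gs(t/\alpha)^n$ whenever $(y,t)\in\mathcal{R}^{\alpha\epsilon,\alpha\epsilon,1/\alpha}(E^*_\sigma)$, the preceding inequality reduces to bounding
\begin{eqnarray*}
\iint_{\mathcal{R}^{\alpha\epsilon,\alpha\epsilon,1/\alpha}(E^*_\sigma)}
\lf|(t\nabla)^m e^{-t^{2m}L}(f)(y)\r|^2\,\frac{dy\,dt}{t^{n+1}}\ls\sigma^2|E^*_\sigma|.
\end{eqnarray*}
Set $u(y,t):=e^{-t^{2m}L}(f)(y)$ and introduce a smooth cut-off $\Psi$ on $\rr^{n+1}_+$, supported in a slight dilate of the saw-tooth and identically one on it, with $\|\nabla^k\Psi(\cdot,t)\|_{L^\fz(\rn)}\ls t^{-k}$ for $0\le k\le m$ and $|\pat_t\Psi|\ls t^{-1}$. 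Invoking the Strong ellipticity condition $(\mathcal{E}_1)$, one dominates the integral above by a multiple of
\begin{eqnarray*}
\Re e\iint A(y)\nabla^m u(y,t)\cdot\overline{\nabla^m(u\Psi)(y,t)}\,t^{2m-n-1}\,dy\,dt,
\end{eqnarray*}
where $A$ is the coefficient matrix as in \eqref{a-in}. The spatial divergence theorem turns the principal part of this pairing into $\iint Lu\cdot\overline{u}\,\Psi\,t^{2m-n-1}\,dy\,dt$; using $\pat_t u=-2mt^{2m-1}Lu$ and integrating by parts in $t$, this becomes a constant multiple of $\iint|u|^2\,\pat_t(t^{-n}\Psi)\,dy\,dt$. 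Because $\Psi$ is supported in $(\alpha\epsilon,\epsilon/\alpha)$ in $t$, no boundary terms arise. Finally, choosing $\gz$ large relative to $1/\alpha$, the density property of $E^*_\sigma$ guarantees, for every $(y,t)$ in the support of $\Psi$, a point $x_0\in E_\sigma\cap B(y,\gz t)$ with $(y,t)\in\Gamma^\gz(x_0)$, so that the defining bound of $\mathcal{N}^\gz_{h,L}$ yields $(\gz t)^{-n}\int_{B(y,\gz t)}|u(z,t)|^2\,dz\le\sigma^2$; this controls the principal term by $\sigma^2|E^*_\sigma|$.

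The main obstacle is the treatment of the cross terms produced by Leibniz's rule in expanding $\nabla^m(u\Psi)$ and in the subsequent integrations by parts: these pair derivatives of $\Psi$ of order $j\in\{1,\ldots,m\}$ with derivatives of $u$ of order $m-j$, so that \emph{all} intermediate orders of gradients of $u$ must be controlled. The weaker parabolic Caccioppoli inequality \eqref{caccioppoli's inequality2 with epsilon}, which still carries lower-order gradient terms on the right, is insufficient to close an absorption argument. Instead, one must apply the improved Caccioppoli inequality \eqref{caccioppoli's inequality3 with epsilon} on a Whitney-type parabolic decomposition of $\supp\Psi$, converting every intermediate-order integral $\iint|\nabla^k u|^2$ directly into an integral of $|u|^2$ weighted by the appropriate power of $t^{-1}$; these are then again estimated by $\sigma^2|E^*_\sigma|$ via the density argument of the previous paragraph. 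Careful tracking of the dependence of the constants on $\alpha$ and $\gz$ closes the desired good-$\lz$ estimate and completes the proof.
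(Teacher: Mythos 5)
Your proposal follows essentially the same route as the paper's proof: the level set $E_\sigma$ and its subset of global $1/2$ density, the truncated sawtooth region, a smooth cut-off adapted to it, the Strong ellipticity condition plus spatial integration by parts and the identity $\pat_t u=-2mt^{2m-1}Lu$ for the principal term, and the improved parabolic Caccioppoli inequality \eqref{caccioppoli's inequality3 with epsilon} on a Whitney-type decomposition to handle the cross terms, followed by a good-$\lz$ argument. One small correction: the sawtooth estimate to aim for is $\int_{E^*_\sigma}[S^{\az\epsilon,\,\epsilon/\az,\,1/\az}_{h,\,L}(f)]^2\,dx\ls \int_{E_\sigma}[\mathcal{N}^\gz_{h,\,L}(f)]^2\,dx+\sigma^2|E_\sigma^{\complement}|$ rather than $\sigma^2|E^*_\sigma|$ (the latter is useless when $|E^*_\sigma|=\fz$ and does not close the distribution-function integration for $p<2$), but this refined bound is exactly what the density and Whitney arguments you describe actually produce, so the argument is correct as the paper carries it out.
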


\begin{proof}
To prove Proposition \ref{domination of heat by maximal}, we first introduce
some notation. Let  $\sz\in(0,\,\fz)$.  Assume that $\gz\in(0,\,\fz)$,
whose exact value will be determined later. Let
$$E:=\lf\{x\in\rn:\ \mathcal{N}_{h,\,L}^{\gz}(f)(x)\le \sz\r\}.$$
Its \emph{subset $E^*$ of global $1/2$ density} is defined by
\begin{eqnarray*}
E^*:=\lf\{x\in\rn:\ \text{for all balls}\ B(x,\,r) \ \text{in}\
\rn,\ \  \frac{|E\cap B(x,\,r)|}{|B(x,\,r)|}\ge \frac{1}{2}\r\}.
\end{eqnarray*}
For all $0<\epsilon\ll R<\fz$, let $\mathcal{R}^{\epsilon,\,R,\,\gz}(E^*):=
\cup_{x\in E^*}\bgz^{\epsilon,\,R,\,\gz}(x)$ be the
\emph{sawtooth region based on} $E^*$ and $\mathcal{B}^{\epsilon,\,R,\,\gz}(E^*)$
the \emph{boundary} of $\mathcal{R}^{\epsilon,\,R,\,\gz}(E^*)$.
Moreover, for all $y\in\rn$ and $t\in(0,\,\fz)$, let $u(y,\,t):=e^{-t^{2m}L}(f)(y)$.
By Fubini's theorem, we find that
\begin{eqnarray*}
\dint_{E^*}\lf[S_{h,\,L}^{\epsilon,\,R,\,1/2}(f)(x)\r]^{2}\,dx\sim
\iint_{\mathcal{R}^{\epsilon,\,R,\,1/2}(E^*)}
t^{2m}\lf|\nabla^m u(y,\,t)\r|^{2}\,\frac{dy\,dt}{t},
\end{eqnarray*}
where $S_{h,\,L}^{\epsilon,\,R,\,1/2}(f)(x)$ is defined as in \eqref{2.x1}.

Now, let $\eta\in C_c^\fz(\mathcal{R}^{\epsilon/2,\,2R,\,3/2}(E^*))$
be a smooth cut-off function satisfying $0\le \eta\le 1$, $\eta\equiv1$
on $\mathcal{R}^{\epsilon,\,R,\,1/2}(E^*)$ and, for all $k\in\nn$ with $k\le m$
and $(x,\,t)\in \mathcal{R}^{\epsilon/2,\,2R,\,3/2}(E^*)$,
$$\lf|\nabla_x^k\eta(x,\,t)\r|\ls \frac{1}{t^{k}}$$ and
$|\pat_t\eta(x,\,t)|\ls \frac{1}{t}$.
These assumptions, together with the Strong ellipticity condition
$(\mathcal{E}_1)$, imply that
\begin{eqnarray*}
&&\iint_{\mathcal{R}^{\epsilon,\,R,\,1/2}(E^*)}
t^{2m}\lf|\nabla^m u(y,\,t)\r|^{2}\,\frac{dy\,dt}{t}\\
&&\hs\ls \Re e
\lf\{\iint_{\mathcal{R}^{\epsilon,\,R,\,1/2}(E^*)} \lf[t^{2m} \dsum_{|\az|=
m=|\bz|}a_{\az,\,\bz}(y)\pat^\bz u(y,\,t)
\overline{\pat^\az u(y,\,t)}\r]\,\frac{dy\,dt}{t} \r\}\\
&&\hs\ls \Re e
\lf\{\iint_{\rr^{n+1}_+} \lf[t^{2m} \dsum_{|\az|=
m=|\bz|}a_{\az,\,\bz}(y) \pat^\bz u(y,\,t)
\overline{\pat^\az u(y,\,t)}\eta(y,\,t)\r]\,\frac{dy\,dt}{t} \r\}.
\end{eqnarray*}
From this,  we further deduce that
\begin{eqnarray}\label{divide into Jk}
\nonumber&&\iint_{\mathcal{R}^{\epsilon,\,R,\,1/2}(E^*)}
t^{2m}\lf|\nabla^m u(y,\,t)\r|^{2}\,\frac{dy\,dt}{t}\\
&&\nonumber\hs\ls \lf|\Re e \lf\{\iint_{\rr^{n+1}_+} \lf[t^{2m} \dsum_{|\az|=
m=|\bz|}(-1)^{m} \pat^{\az}\lf(\eta a_{\az,\,\bz} \pat^\bz u\r)(y,\,t)
\overline{u(y,\,t)}\r]\,\frac{dy\,dt}{t} \r\}\r|\\
&&\nonumber\hs\ls \dsum_{k=0}^m \lf|\Re e \lf\{\iint_{\rr^{n+1}_+}
\lf[t^{2m} \dsum_{|\az|=m=|\bz|}\dsum_{|\wz\az|=k,\,\wz\az\le \az}
C_{(\az,\,\wz\az)}(-1)^{m} \pat^{\wz\az}\eta(y,\,t)\r.\r.\r.\\
&&\lf.\lf.\hs\hs\times \pat^{\az-\wz\az}\lf(a_{\az,\,\bz}
\pat^\bz u\r)(y,\,t)\overline{u(y,\,t)}\Bigg]\,\frac{dy\,dt}{t} \r\}\r|
=:\dsum_{k=0}^m\mathrm{J}_k,
\end{eqnarray}
where, for any multi-indices $\az$ and $\wz\az$ as above, $C_{(\az,\,\wz\az)}$
denotes a positive constant depending on $\az$ and $\wz\az$.

We first bound $\mathrm{J}_0$. Since, for all $(y,\,t)\in\rr^{n+1}_+$,
$\frac{\pat}{\pat t}u(y,\,t)=-2m t^{2m-1} {L}(u)(y,\,t)$, we know that
\begin{eqnarray*}
\frac{\pat}{\pat t}\lf|u(y,\,t)\r|^2&&=-2m t^{2m-1} {L}(u)(y,\,t)\overline{u(y,\,t)}
-2m t^{2m-1} u(y,\,t)\overline{L(u)(y,\,t)}\\
&&=-4m t^{2m-1}\Re e\lf\{ {L}(u)(y,\,t)\overline{u(y,\,t)}\r\},
\end{eqnarray*}
which, together with integration by parts and properties of the cut-off
function $\eta$,  shows that
\begin{eqnarray*}
\mathrm{J}_0&&\sim \lf|\Re e \lf\{\iint_{\rr^{n+1}_+}
t^{2m-1}  \eta(y,\,t) {L} (u)(y,\,t)\overline{u(y,\,t)}\,dy\,dt \r\}\r|\\
&&\sim \lf|\iint_{\rr^{n+1}_+} \eta(y,\,t) \frac{\pat}{\pat t}\lf|u(y,\,t)\r|^2\,
dy\,dt\r|\ls \iint_{\mathcal{R}^{\epsilon/2,\,2R,\,3/2}(E^*)\setminus
\mathcal{R}^{\epsilon,\,R,\,1/2}(E^*)}  \lf|u(y,\,t)\r|^2\,\frac{dy\,dt}{t}.
\end{eqnarray*}
To estimate the last term in the above formulae, we let
\begin{eqnarray}\label{2.x2}
\wz{\mathcal{B}}^\epsilon(E^*):=\lf\{(x,\,t)\in\rn\times (\epsilon/2,\,\epsilon):
\ d(x,\,E^*)<\frac{3}{2}t\r\},
\end{eqnarray}
\begin{eqnarray}\label{2.x3}
\wz{\mathcal{B}}^R(E^*):=\lf\{(x,\,t)\in\rn\times (R,\,2R):
\ d(x,\,E^*)<\frac{3}{2}t\r\}
\end{eqnarray}
and
\begin{eqnarray}\label{2.x4}
\wz{\mathcal{B}}_0(E^*):=\lf\{(x,\,t)\in\rn\times (\epsilon/2,\,2R):
\ \frac{1}{2}t\le d(x,\,E^*)<\frac{3}{2}t\r\}.
\end{eqnarray}
It is easy to see that $(\mathcal{R}^{\epsilon/2,\,2R,\,3/2}(E^*)\setminus
\mathcal{R}^{\epsilon,\,R,\,1/2}(E^*))\subset (\wz{\mathcal{B}}^\epsilon(E^*)\cup
\wz{\mathcal{B}}^R(E^*)\cup \wz{\mathcal{B}}_0(E^*))$.
For any $(y,\,t)\in \wz{\mathcal{B}}^\epsilon(E^*)$, we find that
there exists $x\in E^*$ such that $|x-y|<\frac{3}{2}t$. Moreover, from the definition
of $E^*$, it follows that, for all $t\in(0,\,\fz)$,
\begin{eqnarray*}
|E\cap B(x,\,t)|\ge \frac{1}{2}|B(x,\,t)|=\frac{1}{2}\omega_nt^n,
\end{eqnarray*}
where $\omega_n:=|B(x,\,1)|=|B(0,\,1)|$.
Thus, $|E\cap B(y,\,3t)|\ge \frac{1}{2} \omega_n t^n$, which, combined with Fubini's theorem,
implies that
\begin{eqnarray}\label{estimates on B epsilon}
\nonumber&&\iint_{\wz{B}^\epsilon (E^*)}\lf|u(y,\,t)\r|^2\,\frac{dy\,dt}{t}\\
&&\hs\nonumber\ls\iint_{\wz{B}^\epsilon (E^*)}
\lf[\dint_{E\cap B(y,\,3t)}\lf|u(y,\,t)\r|^2
\,dz\r]\,\frac{dy\,dt}{t^{n+1}}\\
&&\hs\nonumber\ls\dint_{\epsilon/2}^{\epsilon}\dint_{E}
\lf[\frac{1}{t^n}\dint_{B(z,\,3t)}\lf|e^{-t^{2m}{L}}(f)(y)\r|^2\,dy\r]
\,\frac{dz\,dt}{t}\\
&&\hs\nonumber\ls \dint_{\epsilon/2}^\epsilon
\dint_{E}\lf|\sup_{(x,\,t)\in\Gamma^3(z)}
\lf\{\frac{1}{\omega_n(3 t)^n} \int_{B(x,\,3 t)}
\lf|e^{-t^{2m} {L}}(f)(y)\r|^2\,dy\r\}^{\frac{1}{2}}\r|^2\,\frac{dz\,dt}{t}\\
&&\hs\sim\dint_{\epsilon/2}^\epsilon
\dint_{E}\lf|\mathcal{N}_{h,\,L}^3(f)(z)\r|^2\,\frac{dz\,dt}{t}\sim
\dint_{E}\lf|\mathcal{N}_{h,\,L}^3(f)(z)\r|^2\,dz.
\end{eqnarray}
Similarly, we have
\begin{eqnarray}\label{estimates on B R}
\iint_{\wz{\mathcal{B}}^R (E^*)}\lf|u(y,\,t)\r|^2\,
\frac{dy\,dt}{t}\ls \dint_{E}\lf|\mathcal{N}_{h,\,L}^3(f)(z)\r|^2\,dz.
\end{eqnarray}
To estimate the integrand on the region $\wz{\mathcal{B}}_0(E^*)$, let
$\{B(x_k,\,r_k)\}_k$ be Whitney's covering of $B^*$,
where $B^*:=(E^*)^{\complement}$. Then we see that
\begin{itemize}
\item [(i)] $\cup_{k} B(x_k,\,r_k)=B^*$;
\item [(ii)] there exist positive constants $C_1$ and $C_2\in(0,\,1)$  such that, for all $k$,
$$C_1d(x_k,\,E^*)\le r_k\le C_2 d(x_k,\,E^*);$$
\item [(iii)] there exists a positive constant $C_3$ such that, for all $x\in B^*$,
$\sum_k\chi_{B(x_k,\,r_k)}(x)\le C_3$.
\end{itemize}
From these, we deduce that
\begin{eqnarray*}
\iint_{\wz{\mathcal{B}}_0(E^*)}\lf|u(y,\,t)\r|^2\,\frac{dy\,dt}{t}&&\ls
\dsum_k\dint_{\frac{2}{3}r_k(\frac{1}{C_2}-1)}^{2r_k(\frac{1}{C_1}+1)}\dint_{B(x_k,\,r_k)}
\lf|u(y,\,t)\r|^2\,\frac{dy\,dt}{t}\\
&&\ls\dsum_k\dint_{\frac{2}{3}r_k(\frac{1}{C_2}-1)}^{2r_k(\frac{1}
{C_1}+1)}r_k^n
\lf[\frac{1}{t^n}\dint_{B(x_k,\,r_k)}\lf|u(y,\,t)\r|^2\,dy\r]\,\frac{dt}{t}.
\end{eqnarray*}
By the fact $E^*\subset E$, we know that $d(x_k,\,E)\le d(x_k,\,E^*)\le \frac{C_2}
{(1-C_2)C_1}t$. Thus, by taking $\gz\in(\frac{C_2}{(1-C_2)C_1},\,\fz)$,
we conclude that
\begin{eqnarray}\label{estimates on B plus}
\iint_{\wz{\mathcal{B}}_0(E^*)}\lf|u(y,\,t)\r|^2\,\frac{dy\,dt}{t}\ls
\dsum_k r_k^n \lf[\dsup_{z\in E} \mathcal{N}_{h,\,L}^\gz (f)(z)\r]^2\ls |B^*|
\lf[\dsup_{z\in E} \mathcal{N}_{h,\,L}^\gz (f)(z)\r]^2.
\end{eqnarray}
Combining the estimates of \eqref{estimates on B epsilon}, \eqref{estimates on B R}
and \eqref{estimates on B plus}, we see that
\begin{eqnarray}\label{estimates of J0}
\mathrm{J}_0\ls \dint_{E}\lf|\mathcal{N}_{h,\,L}^3(f)(z)\r|^2\,dz+
|B^*|\lf[\dsup_{z\in E} \mathcal{N}_{h,\,L}^\gz (f)(z)\r]^2.
\end{eqnarray}

Now, we turn to the estimates of $\mathrm{J}_k$
for all $k\in\{1,\ldots,\,m\}$.
Using integration by parts and H\"older's inequality, we write
\begin{eqnarray}\label{eqn estJk}
\nonumber\mathrm{J}_k&&\sim \lf|\Re e \lf\{\iint_{\rr^{n+1}_+}
t^{2m}\!\dsum_{|\az|=m=|\bz|}\dsum_{|\wz\az|=k,\,\wz\az\le \az}
\wz C_{(\az,\,\wz\az,\,m)}\r.\r.\\
&&\nonumber\hs\times\lf.a_{\az,\,\bz}(y)\pat^{\bz}u(y,\,t)\overline{\pat^{\az-\wz{\az}}
\lf((\pat^{\wz{\az}}\eta)u\r)(y,\,t)}\,\frac{dy\,dt}{t}\Bigg\}\r|\\
&&\nonumber\ls\dsum_{|\az|=m=|\bz|}\dsum_{|\wz\az|=k,\,\wz\az\le \az}
\lf\{ \iint_{\mathcal{R}^{\epsilon/2,\,2R,\,3/2}(E^*)\setminus
\mathcal{R}^{\epsilon,\,R,\,1/2}(E^*)}\lf|t^{m}
\pat^\bz u(y,\,t)\r|^2\,\frac{dy\,dt}{t}\r\}^{\frac{1}{2}}\\
&&\nonumber\hs\times\lf\{\dint_{\mathcal{R}^{\epsilon/2,\,2R,\,3/2}
(E^*)\setminus\mathcal{R}^{\epsilon,\,R,\,1/2}(E^*)}
\lf|t^{m-k}\pat^{\az-\wz\az}\lf(t^k [\pat^{\wz\az}\eta]u\r)
(y,\,t)\r|^2\,\frac{dy\,dt}{t}\r\}^{\frac{1}{2}}\\
&&=:\dsum_{|\az|=m=|\bz|}\dsum_{|\wz\az|=k,\,\wz\az\le \az}
\mathrm{J}_{\az,\,\wz\az,\,\bz,\,1}
\times\mathrm{J}_{\az,\,\wz\az,\,\bz,\,2},
\end{eqnarray}
where, for any $m\in\nn$ and any multi-indices $\az$ and $\wz\az$ as above,
$\wz C_{(\az,\,\wz\az,\,m)}$ denotes a constant depending on $\az$, $\wz\az$ and $m$.

We first control $\mathrm{J}_{\az,\,\wz\az,\,\bz,\,1}$.
Let $\wz{\mathcal{B}}^{\epsilon}(E^*)$, $\wz{\mathcal{B}}^{R}(E^*)$
and $\wz{\mathcal{B}}_0(E^*)$ be, respectively, as in
\eqref{2.x2}, \eqref{2.x3} and \eqref{2.x4}.
Similar to \eqref{estimates on B epsilon}, we obtain
\begin{eqnarray}\label{estimates similar to B epsilon}
\iint_{\wz{\mathcal{B}}^{\epsilon}(E^*)}
\lf|t^{m} \nabla^m u(y,\,t)\r|^2\,\frac{dy\,dt}{t}\ls
\dint_{E}\lf[\dint_{\epsilon/2}^{\epsilon}\frac{1}{t^n}
\dint_{B(z,\,3t)}\lf|t^m\nabla^m u(y,\,t)\r|^2\,\frac{dy\,dt}{t}\r]
\,dz.
\end{eqnarray}
This, together with the parabolic Caccioppoli's inequality
\eqref{caccioppoli's inequality3 with epsilon}, implies that
\begin{eqnarray}\label{estimates on B epsilon for k}
\nonumber\iint_{\wz{\mathcal{B}}^{\epsilon}(E^*)}
\lf|t^{m} \nabla^m u(y,\,t)\r|^2\,\frac{dy\,dt}{t}
&&\ls \dint_{E}\dint_{\epsilon/4}^{2\epsilon}
\frac{1}{t^{n+2m}}\dint_{B(z,\,6t)}\lf[\lf|t^mu(y,\,t)\r|^2 \r]\,\frac{dy\,dt\,dz}{t}\\
&&\ls\dint_{E}\lf[\mathcal{N}_{h,\,L}^6 (f)(y)\r]^2\,dy.
\end{eqnarray}

Similarly, resting on estimates \eqref{estimates on B R},
\eqref{estimates on B plus}, \eqref{estimates on B epsilon for k}
and the parabolic Caccioppoli's inequality \eqref{caccioppoli's inequality3 with epsilon},
we conclude that there exists a positive
constant $\gz\in(0,\,\fz)$ large enough such that
\begin{eqnarray*}
\iint_{\wz{\mathcal{B}}^{R}(E^*)\cup \wz{\mathcal{B}}_0(E^*)}
\lf|t^{m} \nabla^m u(y,\,t)\r|^2\,\frac{dy\,dt}{t}\ls
\dint_{E}\lf[\mathcal{N}_{h,\,L}^\gz (f)(y)\r]^2\,dy+ |B^*|
\lf[\dsup_{x\in E}\mathcal{N}_{h,\,L}^\gz (f)(x)\r]^2.
\end{eqnarray*}
This, combined with \eqref{estimates on B epsilon for k}, implies that
\begin{eqnarray}\label{estimates of Jk1}
\mathrm{J}_{\az,\,\wz\az,\,\bz,\,1}\ls
\lf\{\dint_{E}\lf[\mathcal{N}_{h,\,L}^\gz (f)(y)\r]^2\,dy+ |B^*|
\lf[\dsup_{x\in E}\mathcal{N}_{h,\,L}^\gz (f)(x)\r]^2\r\}^{\frac{1}{2}}.
\end{eqnarray}

The estimate of $\mathrm{J}_{\az,\,\wz\az,\,\bz,\,2}$
can be obtained by using the definition of $\eta$, the interpolation
inequality \eqref{interpolation of sobolev space} and the estimates of
$\mathrm{J}_0$  and $\mathrm{J}_{\az,\,\wz\az,\,\bz,\,1}$. By the estimate of
$\mathrm{J}_{\az,\,\wz\az,\,\bz,\,2}$, \eqref{divide into Jk} and  the estimates of
\eqref{estimates of J0} through \eqref{estimates of Jk1}, we see that
\begin{eqnarray*}
\dint_{\mathcal{R}^{2\epsilon,\,R,\,1/2}(E^*)}
t^{2m}\lf|\nabla^m u(y,\,t)\r|^{2}\,\frac{dy\,dt}{t}\ls
\dint_{E}\lf[\mathcal{N}_{h,\,L}^\gz (f)(y)\r]^2\,dy+ |B^*|
\lf[\dsup_{x\in E}\mathcal{N}_{h,\,L}^\gz (f)(x)\r]^2,
\end{eqnarray*}
where $\gz\in(0,\,\fz)$ is a sufficiently large constant. By this and
an argument similar to that used in \cite[(6.31) through (6.37)]{hm09},
we then complete the proof of Proposition \ref{domination of heat by maximal}.
\end{proof}

We are now in a position to prove our main result of this article.

\begin{proof}[Proof of Theorem
\ref{non-tangential heat maximal functiona characterization}]
The inclusion that $H_{\mathcal{N}_{h,\,L}}^p(\rn)\subset H_{L}^p(\rn)$, for all
$p\in(0,\,2)$, is a direct
consequence of Propositions \ref{domination of square functions}
and \ref{domination of heat by maximal},
and Corollary \ref{pro domination of square functions K}.
We now turn to the proof of the inclusion
$H_{\mathcal{N}_{h,\,L}}^p(\rn)\subset H_{L}^p(\rn)$ for all $p\in[2,\,p_+(L))$.
Using Lemma \ref{lem relation Hardy Lebesgue L0}, we are reduced to proving that,
for all $p\in[2,\,p_+(L))$ and $f\in \mathbb{H}^p_L(\rn)$,
\begin{equation}\label{eqn m1}
\|f\|_{L^p(\rn)}\ls \|\mathcal{N}_{h,\,L}(f)\|_{L^p(\rn)}.
\end{equation}

To show \eqref{eqn m1}, for $p\in[2,\,p_+(L))$, let $\psi\in L^{p'}(\rn)$ satisfying that
$\|\psi\|_{L^{p'}(\rn)}\le 1$ and
\begin{equation}\label{eqn m2}
\|f\|_{\lp}\ls\lf|\int_{\R^n}f(y)\overline{\psi(y)}\,dy\r|,
\end{equation}
here and hereafter, $1/p+1/p'=1$. We first claim that
\begin{equation}\label{cor5}
\lf|\int_{\R^n}f(y)\overline{\psi(y)}\,dy\r|=\lim_{t\to 0^+}\lf|\int_{\R^n}e^{-t^{2m}L}(f)(y)
\lf[\frac{1}{t^n}\int_{B(y,t)}\overline{\psi(x)} dx\r]\,dy\r|,
\end{equation}
here and hereafter, ``$t\to 0^+$" means that ``$t>0$ and $t\to 0$".
Indeed, if the claim \eqref{cor5} holds true, then, from Fubini's theorem, H\"{o}lder's inequality
and Remark \ref{rem def MA}(ii), we deduce that
\begin{align*}
\lf|\int_{\R^n}f(y)\overline{\psi(y)}\,dy\r|&\leq \sup_{t>0}\lf\{\int_{\R^n}|\psi(x)|\lf[\frac{1}{t^n}\int_{B(x,\, t)}
\lf|e^{-t^{2m}L}(f)(y)\r|\,dy\r]dx\r\}\\
&\ls \|\psi\|_{L^{p'}(\R^n)}\sup_{t>0}\lf\{\int_{\R^n}\lf[\frac{1}{t^n}\int_{B(x,\, t)}|e^{-t^{2m}L}(f)(y)|^2\,dy\r]^{\frac{p}{2}}\,dx\r\}^{\frac{1}{p}}\notag\\
&\ls\left \{\int_{\R^n}\lf(\sup_{t>0}\lf[\frac{1}{t^n}\int_{B(x,t)}|e^{-t^{2m}L}(f)(y)|^2 \,dy\r]^{\frac{1}{2}}\r)^p\,dx\right\}^{\frac{1}{p}}\notag\\
&\nonumber\sim\|\mathcal{R}_{h,\,L}(f)\|_{\lp}\sim \|\mathcal{N}_{h,\,L}(f)\|_{\lp},
\end{align*}
which, together with \eqref{eqn m2}, implies that, for all $p\in [2,\,p_{+}(L))$,
\eqref{eqn m1} holds true.

Thus, to finish the proof of \eqref{eqn m1},
it remains to show the claim \eqref{cor5}. By H\"{o}lder's inequality and an
elementary calculation, we see that
\begin{align*}
& \lf|\int_{\R^n}f(y)\overline{\psi(y)}\,dy-\int _{\R^n} e^{-t^{2m}L}(f)(y)\lf[\frac{1}{t^n}
\int_{B(y,\,t)}\overline{\psi(x)}dx\r]\,dy\r|\\
&\hs\leq \lf| \int_{\R^n}\lf\{f(y)-e^{-t^{2m}L}(f)(y)\r\}\overline{\psi(y)}\,dy\r|\\
&\hs\hs+\lf|\int_{\R^n}e^{-t^{2m}L}(f)(y)\lf\{\overline{\psi(y)}
-\frac{1}{t^n}\int_{B(y,\,t)}\overline{\psi(x)}dx\r\}\,dy\r|\\
&\hs\le \lf\{\int_{\R^n}\lf|(e^{-t^{2m}L}-I)(f)(y)\r|^p \,dy\r\}^{\frac{1}{p}}\lf\{\int_{\R^n}
\lf|{\psi(y)}\r|^{p'}\,dy\r\}^{\frac{1}{p'}}\\
&\hs\hs+\lf\{\int_{\R^n}\lf|e^{-t^{2m}L}(f)(y)\r|^p\,dy\r\}^{\frac 1{p}}\lf\{\int_{\R^n}
\lf|\psi(y)-\frac{1}{t^n}\int _{B(y,\,t)}\psi(x)\,dx\r|^{p'}\,dy\r\}^{\frac{1}{p'}}\\
&\hs=: \mathrm{A}_t+\mathrm{B}_t.
\end{align*}

Notice that, by the fact that $\lim_{t\to 0^+}e^{-tz}=1$ for all complex numbers $z$ and
the fact that $L$ has a bounded functional calculus in $L^q(\rn)$ with $q\in (p_-(L),\,p_+(L))$
(which is a simple corollary of Proposition \ref{pro Lp semiL0}(iii) and \cite[Theorem 1.2]{bk03}),
we know that $\{e^{-tL}\}_{t>0}$ has the strong continuity in $L^q(\rn)$ for all $q\in (p_-(L),\,p_+(L))$.
Letting $t\to 0^+$, and using the strong continuity of the semigroup $\{e^{-tL}\}_{t>0}$ in $L^p(\rn)$ for
$p\in[2,\,p_+(L))$ and $\|\psi\|_{L^{p'}(\rn)}\le 1$,
we know that
$$\lim_{t\to 0^+} A_t=0.$$

In what follows, for any locally integrable function $f$, let
$\mathcal{M}(f)$ be the \emph{Hardy-Littlewood maximal function}
defined by setting, for all $x\in\rn$,
$$\mathcal{M}(f)(x):=\dsup_{B\ni x}\frac{1}{|B|}\dint_B \lf|f(x)\r|\,dx,$$
where the supremum is taken over all the balls in $\rn$ containing $x$. Observe that,
for all $y\in\rn$,
$$\lf|\psi(y)-\frac{1}{t^n}\int _{B(y,\,t)}\psi(x)\,dx\r|\le 2\mathcal{M}(\psi)(y)$$
and $\mathcal{M}(\psi)\in L^{p'}(\rn)$. From this, the Lebesgue dominated
convergence theorem and the Lebesgue differentiation theorem, together with
the uniformly boundedness of the semigroup
$\{e^{-tL}\}_{t>0}$ in $L^p(\rn)$ for
$p\in[2,\,p_+(L))$, we deduce that
$$\lim_{t\to 0^+}B_t=0,$$
which completes the  proof of the claim \eqref{cor5}. Thus, $H_{\mathcal{N}_{h,\,L}}^p(\rn)\subset H_{L}^p(\rn)$
for all $p\in(0,\,p_+(L))$.

To prove the inclusion $H_{L}^p(\rn)\subset H_{\mathcal{N}_{h,\,{L}}}^p(\rn)$,
we consider two cases. If $p\in(0,\,1]$,
by Theorem \ref{molecular characterization for Hardy space L0} and
Remark \ref{rem def MA},
we see that it suffices to show that, for all $(p,\,2,\,M,\,\epsilon)_{L}$-molecules
$\az$, $$\|\mathcal{R}_{h,\,{L}} (\az)\|_{L^p(\rn)}\ls 1,$$
where $\mathcal{R}_{h,\,{L}}$ is the radial heat maximal function defined as in
\eqref{radial maximal function for heat}. The latter estimate can be obtained by
using the same method as that used in the proof of \cite[Theorem 6.3]{hm09}, the
details being omitted here. This finishes the proof of Theorem
\ref{non-tangential heat maximal functiona characterization} for $p\in(0,\,1]$.

If $p\in(1,\,p_+(L))$, let $\mathcal{R}_{h,\,L}$ be the radial maximal function defined as in
\eqref{radial maximal function for heat} and, for any ball $B$ and $j\in\nn$,
let $S_j(B):=2^jB\setminus (2^{j-1}B)$ and $S_0(B):=B$.
Then, for any  $q\in (2,\,\fz)$, using Minkowski's inequality,
Proposition \ref{pro Lp semiL0} and the boundedness of $\mathcal{M}$
on $L^{q/2}(\rn)$,  we know that there exists a positive constant $\eta$
such that
\begin{eqnarray*}
&&\lf\|\mathcal{R}_{h,\,L}(f)\r\|_{L^q(\rn)}\\
&&\hs=\lf\|\sup_{t\in(0,\fz)}\lf\{\frac{1}{t^n}\dint_{B(\cdot,\, t)} \lf|e^{-t^{2m}{L}}
\lf(\dsum_{j\in\zz_+}\chi_{S_j(B(\cdot,\, t))}f\r)
(y)\r|^2\,dy\r\}^{\frac{1}{2}}\r\|_{L^q(\rn)}\\
&&\hs\ls\dsum_{j\in\zz_+}
\lf\|\sup_{t\in(0,\fz)}\lf[\frac{1}{t^{\frac n2}}\exp\lf\{-\frac{[d(B(\cdot,\, t),\,S_j(B(\cdot,\, t)))]^{2m/(2m-1)}}
{t^{{2m}/(2m-1)}}\r\}\lf\|f\r\|_{L^2(S_j(B(\cdot,\, t)))}\r]\r\|_{L^q(\rn)}\\
&&\hs\ls\dsum_{j\in\zz_+}2^{-j\eta}
\lf\|\sup_{t\in(0,\fz)}\lf[\frac{1}{(2^jt)^n}\dint_{2^jB(\cdot,\, t)}
\lf|f(x)\r|^2\,dx\r]^{\frac{1}{2}}\r\|_{L^q(\rn)}\\
&&\hs\ls\lf\|\lf[\mathcal{M}\lf(|f|^2\r)\r]^{1/2}\r\|_{L^q(\rn)}
\ls\lf\|f\r\|_{L^q(\rn)},
\end{eqnarray*}
which implies that $\mathcal{R}_{h,\,L}$
is bounded on $L^q(\rn)$. This, together with Remark \ref{rem def MA},
further shows that the non-tangential maximal function $\mathcal{N}_{h,\,{L}}$
is also bounded on $L^q(\rn)$ for all $q\in(2,\,\fz)$. By the case $p\in(0,\,1]$, Lemma
\ref{lem relation Hardy Lebesgue L0} and
the complex interpolation of $H_L^p(\rn)$ (see
Proposition \ref{pro interpolation Hardy L0} {together with \cite[p.\,52, Theorem]{Jan}}),
we see that, for all $p\in(0,\,p_+(L))$, $\mathcal{N}_{h,\,{L}}$ is bounded from
$H_L^p(\rn)$ to $L^p(\rn)$. This implies the inclusion
$H_{L}^p(\rn)\subset H_{\mathcal{N}_{h,\,L}}^p(\rn)$ for all $p\in(0,\,p_+(L))$
and hence finishes the proof of
Theorem \ref{non-tangential heat maximal functiona characterization}.
\end{proof}

With the help of Theorem \ref{non-tangential heat maximal functiona characterization},
we are able to prove Theorem \ref{non-tangential heat maximal functiona characterization gradient}.

\begin{proof}[Proof of Theorem \ref{non-tangential heat maximal functiona characterization gradient}]

We first point out that,  similar to Remark \ref{rem def MA}, we have
$$H_{\wz{\mathcal{N}}_{h,\,L}}^p(\rn)=H_{\wz{\mathcal{R}}_{h,\,L}}^p(\rn)$$
with equivalent quasi-norms. Moreover, by \eqref{radial maximal function for heat}, \eqref{non-tangential maximal function for heat},
\eqref{radial maximal function for heat q} and \eqref{non-tangential maximal function for heat q},
we immediately see that, for all $f\in L^2(\rn)$ and $x\in\rn$,
\begin{eqnarray*}
{\mathcal{N}}_{h,\,L}(x)\le \wz{\mathcal{N}}_{h,\,L}(f)(x),
\end{eqnarray*}
which, together with Theorem \ref{non-tangential heat maximal functiona characterization},
implies the inclusion that $H_{\wz{\mathcal{N}}_{h,\,L}}^p(\rn)
\subset H_L^p(\rn)$.
The proof of the inclusion that $H_L^p(\rn)\subset H_{\wz{\mathcal{R}}_{h,\,L}}^p(\rn)$
is similar to the corresponding part of the proof of Theorem \ref{non-tangential heat maximal functiona characterization}. Here, we only need to use the $L^2$ off-diagonal estimates
of the gradient semigroup $\{\sqrt{t}\nabla^me^{-t^{2m}L}\}_{t>0}$
to replace the $L^2$ off-diagonal estimates
of the semigroup $\{e^{-t^{2m}L}\}_{t>0}$ therein, which completes the proof
of Theorem \ref{non-tangential heat maximal functiona characterization gradient}.
\end{proof}

Now we turn to the proof of Proposition \ref{pro NTMFC2 L0}.

\begin{proof}[Proof of Proposition \ref{pro NTMFC2 L0}]
For any $(x,\,t)\in\rr^{n+1}_+$, following \cite[pp.\,59-60, 3.2(c)]{af03},
let $W_0^{m,\,2}(B(x,\,2t))$ be the \emph{Sobolev space over} $B(x,\,2t)$,
defined as the completion of $C_c^\fz(B(x,\,2t))$ with respect to
the \emph{norm} $\|\cdot\|_{W^{m,\,2}(B(x,\,2t))}$,
where, for all $\fai\in C_c^\fz(B(x,\,2t))$,
\begin{eqnarray*}
\|\fai\|_{W^{m,\,2}(B(x,\,2t))}:
=\lf[\dsum_{0\le |\az|\le m}\|\pat^\az \fai\|_{L^2(B(x,\,2t))}^2\r]^{1/2}.
\end{eqnarray*}
Since $f\in L^2(\rn)$, it follows that $e^{-t^{2m}L}(f)\in W^{m,\,2}(\rn)$. Thus,
we have
$$\psi_{x,\,t}e^{-t^{2m}L} (f)\in W_0^{m,\,2}(B(x,\,2t))
\subset W_0^{m-1,\,2}(B(x,\,2t)).$$

Recall the following Poincar\'e's inequality from \cite[p.\,69, Theorem 3.2.1]{Mo08}:
for all $k\in\{0,\,\ldots,\,m-1\}$ and $v\in W_0^{m-1,\,2}(B(x,\,2t))$,
it holds true that, for all $(x,\,t)\in\rr^{n+1}_+$,
\begin{eqnarray*}
\dint_{B(x,\,2t)}|\nabla^k v(y)|^2\,dy\le 2^{k-m+1}(2t)^{(m-1-k)2}\dint_{B(x,\,2t)}
\lf|\nabla^{m-1}v(y)\r|^2\,dy.
\end{eqnarray*}
Thus, by this and properties of $\psi$, we see that, for all $(x,\,t)\in\rr^{n+1}_+$,
\begin{eqnarray}\label{eqn est1 proNMFC2}
\nonumber&&\lf\{\frac{1}{t^n} \dint_{B(x,\,t)} \lf|e^{-t^{2m}L}(f)(z)\r|^2\,dz\r\}^{1/2}\\
&&\nonumber\hs\le \lf\{\frac{1}{t^n} \dint_{B(x,\,2t)} \lf|\psi_{x,\,t}(z)e^{-t^{2m}L}(f)(z)\r|^2\,dz\r\}^{1/2}\\
&&\hs\ls \lf\{\frac{1}{t^n} \dint_{B(x,\,2t)} \lf|\lf(t\nabla\r)^{m-1}\lf(\psi_{x,\,t}e^{-t^{2m}L} (f)\r)(z)\r|^2\,dz\r\}^{1/2},
\end{eqnarray}
which, combined with \eqref{eqn est1 proNMFC2}, further shows that, for all $x\in\rn$,
\begin{eqnarray}\label{eqn est2 proNMFC2}
\mathcal{N}_{h,\,L}(f)(x)\ls \mathcal{N}^2_{h,\,\psi,\,L}(f)(x).
\end{eqnarray}

As for the converse direction, by Leibniz's rule and properties of $\psi$,
for all $(x,\,t)\in\rr^{n+1}_+$, we have
\begin{eqnarray*}
&&\lf\{\frac{1}{t^n} \dint_{B(x,\,2t)} \lf|\lf(t\nabla\r)^{m-1}
\lf(\psi_{x,\,t}e^{-t^{2m}L}(f)\r)(z)\r|^2\,dz\r\}^{1/2}\\
&&\hs\ls\lf\{\frac{1}{t^n} \dint_{B(x,\,2t)} \dsum_{k=0}^{m-1}
\lf|\lf(t\nabla\r)^{k}\lf(e^{-t^{2m}L}
(f)\r)(z)\r|^2\,dz\r\}^{1/2},
\end{eqnarray*}
which implies that, for all $x\in\rn$,
\begin{eqnarray}\label{eqn est3 proNMFC2}
\mathcal{N}^2_{h,\,\psi,\,L}(f)(x)\ls \wz{\mathcal{N}}^{2}_{h,\,L}(f)(x).
\end{eqnarray}

Combining \eqref{eqn est2 proNMFC2}, \eqref{eqn est3 proNMFC2},
Theorems \ref{non-tangential heat maximal functiona characterization}
and \ref{non-tangential heat maximal functiona characterization gradient},
we conclude that
$H_{L}^p(\rn)=H_{\mathcal{N}_{h,\,\psi,\,L}}^p(\rn)$
with equivalent quasi-norms, which completes
the proof of Proposition \ref{pro NTMFC2 L0}.
\end{proof}

Now, we prove Theorem \ref{thm SFLAF Hardy}. To this end,
we need the following proposition.

\begin{proposition}\label{pro MI}
Let $k\in\nn$, $L$ be as in \eqref{L homo} and satisfy the Ellipticity condition
$(\mathcal{E}_0)$. Let $S_{L,\,k}$ and $S_{h,\,L,\,k}$ be the same, respectively, as
in \eqref{eqn KSF} and \eqref{gradient square function for heat}. Then, for all
$q\in (0,\, {p_+(L)})$, there exists a positive constant $C_{(k,\,q)}$ such that
\begin{enumerate}
\item [{\rm (i)}]  for all $f\in {L^2(\rn)}$,
\begin{eqnarray*}
\lf\|S_{L,\,k} (f)\r\|_{L^q(\rn)}\le
C_{(k,\,q)}\|S_{L,\,1}(f)\|_{L^q(\rn)};
\end{eqnarray*}
\item [{\rm (ii)}] for all $f\in {L^2(\rn)}$,
\begin{eqnarray*}
\lf\|S_{h,\,L,\,k} (f)\r\|_{L^q(\rn)}\le
C_{(k,\,q)}\|S_{L,\,1}(f)\|_{L^q(\rn)}.
\end{eqnarray*}
\end{enumerate}
\end{proposition}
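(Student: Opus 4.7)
The plan is to prove both inequalities by splitting into the regimes $q\in(0,\,2]$ and $q\in(2,\,p_+(L))$, and to reduce part (ii) to part (i) via a pointwise bound obtained from an extension of the parabolic Caccioppoli inequality.

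For part (i), when $q\in(0,\,2]$, we apply Proposition \ref{pro GSFC} with $\psi_k(z):=z^k e^{-z}$. This function is analytic on any sector $S_{\mu}^0$ with $\mu<\pi/2$, behaves like $|z|^k$ near the origin, and decays faster than every negative power at infinity thanks to the exponential factor; hence $\psi_k\in\Psi_{k,\,N}(S_{\mu}^0)$ for every $N\in(0,\,\fz)$. Choosing $N>\frac{n}{2m}(\max\{1/q,\,1\}-1/2)$, Proposition \ref{pro GSFC} gives $\|S_{L,\,k}(f)\|_{L^q(\rn)}\sim\|f\|_{H_L^q(\rn)}$; applying the same result with $k=1$ yields $\|S_{L,\,1}(f)\|_{L^q(\rn)}\sim\|f\|_{H_L^q(\rn)}$, and the two estimates combine to give the conclusion. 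For $q\in(2,\,p_+(L))$, we chain $\|S_{L,\,k}(f)\|_{L^q(\rn)}\ls\|f\|_{L^q(\rn)}$ from Lemma \ref{lem 3.3}(i) with $\|f\|_{L^q(\rn)}\sim\|S_{L,\,1}(f)\|_{L^q(\rn)}$ from Proposition \ref{cor HLP}.

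For part (ii), the plan is to establish the pointwise inequality $S_{h,\,L,\,k}(f)(x)\ls S_{L,\,k}^{\lambda}(f)(x)$ for some aperture $\lambda>1$, after which the change-of-aperture equivalence \eqref{3.xxx18} together with part (i) will furnish the desired $L^q$-bound for every $q\in(0,\,p_+(L))$. To obtain the pointwise bound we follow the Whitney-covering strategy from the proof of estimate \eqref{estimates of IIm} in Proposition \ref{domination of square functions}, but applied to $\wz{u}(y,\,t):=L^k e^{-t^{2m}L}(f)(y)$ in place of $u(y,\,t)=e^{-t^{2m}L}(f)(y)$. The key observation is that $L^k$ commutes with the semigroup equation, so $\wz{u}$ satisfies the identical evolution $\partial_{\wz{t}}\wz{u}+L\wz{u}=0$ with $\wz{t}=t^{2m}$, and moreover $\wz{u}(\cdot,\,t)\in L^2(\rn)$ for every $t>0$ by the smoothing of $\{e^{-tL}\}_{t>0}$. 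Hence the proof of Proposition \ref{caccioppoli's inequality2} applies verbatim to $\wz{u}$. Writing $(t^{2m}L)^k e^{-t^{2m}L}(f)=t^{2mk}\wz{u}$ and using $t\sim r_j$ on each Whitney box, multiplication by the appropriate power of $t$ converts the Caccioppoli bound for $\wz{u}$ into a local inequality relating $t^m\nabla^m\lf((t^{2m}L)^k e^{-t^{2m}L}(f)\r)$ and $(t^{2m}L)^k e^{-t^{2m}L}(f)$; summing over boxes yields $S^{\epsilon,\,R,\,1}_{h,\,L,\,k}(f)(x)\ls S^{\epsilon/4,\,3R,\,2}_{L,\,k}(f)(x)$, and letting $\epsilon\to 0^+$, $R\to\fz$ closes the argument.

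The main obstacle will be justifying the extension of Caccioppoli's inequality to $\wz{u}$: Proposition \ref{caccioppoli's inequality2} is formulated for $u=e^{-t^{2m}L}(f)$ with $f\in L^2(\rn)$, whereas $\wz{u}$ is formally $e^{-t^{2m}L}$ applied to the distribution $L^k f$. The resolution is that the proof of Proposition \ref{caccioppoli's inequality2} relies only on the local $L^2$-regularity of the function in question and on the evolution identity $\partial_{\wz{t}}\wz{u}=-L\wz{u}$, both of which hold here: the former by the $L^2$-boundedness of $L^k e^{-t^{2m}L}$ for each fixed $t>0$, and the latter by commuting $L^k$ past the time derivative in the semigroup equation.
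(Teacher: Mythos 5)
Your proposal is correct in substance, but it reaches part (i) by a genuinely different and shorter route than the paper. The paper proves (i) and (ii) together by induction on $k$: at each step it integrates by parts against a cut-off supported in an enlarged truncated cone, uses the Ellipticity condition and H\"older's inequality to get a pointwise bound of the shape $S^{2^{l}}_{L,\,k}(f)(x)\le C\,[S^{2^{l+1}}_{h,\,L,\,k-1}(f)(x)]^{1/2}[S^{2^{l+1}}_{L,\,k}(f)(x)]^{1/2}$, and then absorbs the second factor through the change-of-aperture Lemma \ref{mlem 1} for the functionals $\mathcal{A}_l$; the Caccioppoli inequality enters to pass from $S_{h,\,L,\,k}$ back to $S_{L,\,k}$. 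Your part (i) for $q\in(0,\,2]$ instead invokes Proposition \ref{pro GSFC} with $\psi_k(z)=z^ke^{-z}$, which is legitimate (the paper itself uses exactly this in the proof of Theorem \ref{thm SFLAF Hardy}(i)) and bypasses the induction; the trade-off is that the paper's inductive argument actually yields the conclusion for all $q\in(0,\,\fz)$, not only $q<p_+(L)$, which your route does not. Your part (ii) --- Caccioppoli applied to $L^ke^{-t^{2m}L}(f)$ on Whitney boxes, giving $S^{\epsilon,\,R,\,1}_{h,\,L,\,k}(f)(x)\ls S^{\epsilon/4,\,3R,\,2}_{L,\,k}(f)(x)$, then change of aperture and part (i) --- is precisely the mechanism behind \eqref{eqn 3.29}, \eqref{3.xx40} and Corollary \ref{cor 3.9}, and your justification that the proof of Proposition \ref{caccioppoli's inequality2} uses only the evolution identity and local $L^2$ regularity is the right one (alternatively, write $L^ke^{-t^{2m}L}(f)=e^{-(t^{2m}-\dz)L}(L^ke^{-\dz L}f)$ with $L^ke^{-\dz L}f\in L^2(\rn)$ and let $\dz\to 0$, which is harmless because the Whitney boxes keep $t$ bounded away from $0$).

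There is one small gap to patch in part (i) for $q\in(2,\,p_+(L))$: Lemma \ref{lem 3.3}(i) and Proposition \ref{cor HLP} are stated for $f\in L^2(\rn)\cap L^q(\rn)$, whereas the proposition asserts the inequality for every $f\in L^2(\rn)$. If $\|S_{L,\,1}(f)\|_{L^q(\rn)}=\fz$ there is nothing to prove, but if it is finite you must first show that $f\in L^q(\rn)$ before your chain of estimates applies. This follows from the duality argument of \eqref{eqn DA}, pairing $f$ against $g\in L^2(\rn)\cap L^{q'}(\rn)$ via the Calder\'on reproducing formula \eqref{eqn Calderon repro L0}; this is exactly how the paper handles the same point in the proof of Theorem \ref{thm SFLAF Hardy}(i).
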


\begin{proof}
We prove Proposition \ref{pro MI} by mathematical induction.

If $k=1$, Proposition \ref{pro MI}(i) automatically holds true.
To prove Proposition \ref{pro MI}(ii),
by \eqref{estimates of IIm}, we know that,
for all $f\in L^2(\rn)$ and $x\in\rn$,
\begin{eqnarray}\label{eqn 3.25}
\lf[S^{\epsilon/2,\,2R,\,3/2}_{h,\,L,\,1}(f)(x)\r]^{2}\le
\lf[S^{\epsilon/4,\,3R,\,2}_{L,\,1}(f)(x)\r]^2.
\end{eqnarray}
By this, together with letting $\epsilon\to 0$ and $R\to \fz$,
\eqref{3.xx18} and \eqref{3.xxx18},
we further conclude that, for all $q\in (0,\,\fz)$ and $f\in L^2(\rn)$,
\begin{eqnarray}\label{3.53x}
\lf\|S_{h,\,L,\,1}(f)\r\|_{L^q(\rn)}\ls \lf\|S_{L,\,1}(f)\r\|_{L^q(\rn)},
\end{eqnarray}
which implies that Proposition \ref{pro MI}(ii) holds true in this case.

If $k=2$, we prove (i) by first establishing a desired estimate for
$\|S_{L,\,2}(f)\|_{L^q(\rn)}$ with $q\in (0,\, \fz)$
(see \eqref{3.31xx} below).
To this end, for all $0<\epsilon \ll R<\fz$, $\lz\in(0,\,\fz)$
and $x\in\rn$, let $\bgz^{\epsilon,\,R,\,\lz}(x)$ be the
{truncated cone} as in \eqref{contracted cone}.
Also, let $\eta\in C_c^\fz(\bgz^{\epsilon/2,\,2R,\,3/2}(x))$ satisfy $\eta\equiv1$
on $\bgz^{\epsilon,\,R,\,1}(x)$, $0\le \eta\le 1$ and, for all $l\in\nn$ with $l\le m$
and $(y,\,t)\in \bgz^{\epsilon/2,\,2R,\,3/2}(x)$,
$$\lf|\nabla^l\eta(y,\,t)\r|\ls \frac{1}{t^{l}}.$$
From properties of $\eta$,
the definition of ${L}$, Leibniz's rule and Minkowski's inequality, we deduce that,
for all $f\in L^2(\rn)$ and $x\in\rn$,
\begin{eqnarray}\label{eqn kI0-Im}
\nonumber S_{L,\,2}^{\epsilon,\,R,\,1}(f)(x)
&&=\lf[\iint_{\bgz^{\epsilon,\,R,\,1}(x)}
\lf|\lf(t^{2m}{L}\r)^2e^{-t^{2m}{L}}(f)(y)\r|^2
\frac{dy\,dt}{t^{n+1}}\r]^{\frac{1}{2}}\\
&&\nonumber\le \lf[\iint_{\bgz^{\epsilon/2,\,2R,\,3/2}(x)}
\lf(t^{2m}{L}\r)^2e^{-t^{2m}{L}}(f)(y)
\overline{\lf(t^{2m}{L}\r)^2e^{-t^{2m}{L}}(f)(y)}\eta(y,\,t)\,
\frac{dy\,dt}{t^{n+1}}\r]^{\frac{1}{2}}\\
&&\nonumber= \lf|\dsum_{|\az|=m=|\bz|}
\iint_{\bgz^{\epsilon/2,\,2R,\,3/2}(x)} a_{\az,\,\bz}(y)
t^{m}\pat^{\bz}\lf(t^{2m}Le^{-t^{2m}{L}}(f)\r)(y)\r.\\
&&\nonumber\hs\times
\overline{t^m\pat^{\az}\lf(\lf(t^{2m}{L}\r)^{2}e^{-t^{2m}{L}}(f)\eta\r)(y,\,t)}
\,\frac{dy\,dt}{t^{n+1}}\Bigg|^{\frac{1}{2}}\\
&&\nonumber\ls \dsum_{l=0}^{m}\lf|\dsum_{|\az|=m=|\bz|}
\iint_{\bgz^{\epsilon/2,\,2R,\,3/2}(x)} a_{\az,\,\bz}(y)
t^{m}\pat^{\bz}\lf(t^{2m}Le^{-t^{2m}{L}}(f)\r)(y)\r.\\
&&\nonumber\hs\times\lf.\lf[t^m \dsum_{|\wz\az|=l,\,\wz\az\le \az}
C_{(\az,\,\wz\az)}\overline{\pat^{\wz\az}\lf(\lf(t^{2m}{L}\r)^2
e^{-t^{2m}{L}}(f)\r)(y)\, \pat^{\az-\wz\az}\eta(y,\,t)}\r]
\,\frac{dy\,dt}{t^{n+1}}\r|^{\frac{1}{2}}\\
&&=: \dsum_{l=0}^{m}\mathrm{I}_l,
\end{eqnarray}
where, for all multi-indices $\az$ and $\wz\az$,
$C_{(\az,\,\wz\az)}$ denotes a positive constant depending on $\az$ and $\wz \az$.

For $\mathrm{I}_0$, by H\"older's inequality,
the size condition of $\eta$ and the Ellipticity condition $(\mathcal{E}_0)$,
we see that, for all $0<\epsilon \ll R<\fz$ and $x\in\rn$,
\begin{eqnarray}\label{eqn estimates of I0k}
\nonumber\mathrm{I}_0
&&\ls \lf\{\iint_{\bgz^{\epsilon/2,\,2R,\,3/2}(x)}
\lf|t^{m}\nabla^{m}\lf(t^{2m}Le^{-t^{2m}{L}}(f)\r)(y)\r|^2\frac{dy\,dt}
{t^{n+1}}\r\}^{\frac{1}{4}}\\
&&\nonumber\hs\times\lf\{\iint_{\bgz^{\epsilon/2,\,2R,\,3/2}(x)}
\lf|\lf(t^{2m}{L}\r)^2e^{-t^{2m}{L}}(f)(y)\r|^2
\frac{dy\,dt}{t^{n+1}}\r\}^{\frac{1}{4}}\\
&&\sim\lf[S^{\epsilon/2,\,2R,\,3/2}_{h,\,L,\,1}(f)(x)\r]^{\frac{1}{2}}
\lf[S^{\epsilon/2,\,2R,\,3/2}_{L,\,2}(f)(x)\r]^{\frac{1}{2}},
\end{eqnarray}
where $S^{\epsilon/2,\,2R,\,3/2}_{h,\,L,\,1}(f)$ and
$S^{\epsilon/2,\,2R,\,3/2}_{L,\,2}(f)$
are defined, respectively, similar to $S_{h,\,L,\,1}(f)$
in \eqref{gradient square function for heat} and $S_{L,\,2}(f)$ in \eqref{eqn KSF},
with $\bgz(x)$ in \eqref{1.x1} replaced by $\bgz^{\epsilon/2,\,2R,\,3/2}(x)$
in \eqref{contracted cone}.

For $\mathrm{I}_m$, using an argument similar to that used in the proof of
\eqref{estimates of Im}, we conclude that, for all $0<\epsilon \ll R<\fz$ and $x\in\rn$,
\begin{eqnarray}\label{eqn estimates of ImK}
\hs\hs\mathrm{I}_m&&\ls\lf[S^{\epsilon/2,\,2R,\,3/2}_{h,\,L,\,1}(f)(x)\r]^{\frac{1}{2}}
\lf[S^{\epsilon/2,\,2R,\,3/2}_{h,\,L,\,2}(f)(x)\r]^{\frac{1}{2}}.
\end{eqnarray}

Moreover, by an argument similar to that used in the proof of \eqref{estimates of IIm}
(see also \eqref{eqn 3.25}),
we find that, for all $0<\epsilon \ll R<\fz$ and $x\in\rn$,
\begin{eqnarray}\label{eqn 3.29}
\lf[{S}^{\epsilon/2,\,2R,\,3/2}_{h,\,L,\,2}(f)(x)\r]^2\ls
\lf[S^{\epsilon/4,\,3R,\,2}_{L,\,2}(f)(x)\r]^2.
\end{eqnarray}

Also, similar to \eqref{estimates of Ik}, we know that, for all $l\in\{1,\,\ldots,\,m-1\}$,
$0<\epsilon \ll R<\fz$ and $x\in\rn$,
\begin{eqnarray}\label{3.xx30}
\nonumber\mathrm{I}_l&&\ls\lf[S^{\epsilon/2,\,2R,\,3/2}_{h,\,L,\,1}(f)(x)\r]^{\frac{1}{2}}
\lf\{\iint_{\bgz^{\epsilon/2,\,2R,\,3/2}(x)}
\lf|t^l\nabla^l\lf(\lf[t^{2m}{L}\r]^2e^{-t^{2m}{L}}(f)\r)(y)\r|^2
\frac{dy\,dt}{t^{n+1}}\r\}^{\frac{1}{4}}\\
&&\ls \lf[S^{\epsilon/2,\,2R,\,3/2}_{h,\,L,\,1}(f)(x)\r]^{\frac{1}{2}}
\lf[{S}^{\epsilon/2,\,2R,\,3/2}_{h,\,L,\,2}(f)(x)\r]^{l/(2m)}
\lf[S^{\epsilon/2,\,2R,\,3/2}_{L,\,2}(f)(x)\r]^{(m-l)/(2m)}.
\end{eqnarray}

By combining \eqref{eqn kI0-Im} through \eqref{3.xx30} and then letting $\epsilon\to 0$ and $R\to \fz$,
we conclude that, there exists a positive constant $C$ such that, for all $f\in L^2(\rn)$ and
almost every $x\in\rn$,
\begin{eqnarray}\label{meqn3.60}
S_{L,\,2}(f)(x)\le C \lf[S^{2}_{h,\,L,\,1}(f)(x)\r]^{\frac{1}{2}}
\lf[S^{2}_{L,\,2}(f)(x)\r]^{\frac{1}{2}}.
\end{eqnarray}

Similarly, by following the same line of the proof of \eqref{meqn3.60},
we conclude that, for all $k\in\zz_+$, $f\in L^2(\rn)$ and almost every $x\in\rn$,
\begin{eqnarray*}
S^{2^k}_{L,\,2}(f)(x)\le  C\lf[ S_{h,\,L,\,1}^{2^{k+1}}(f)(x)\r]^{\frac{1}{2}}
\lf[S^{2^{k+1}}_{L,\,2}(f)(x)\r]^{\frac{1}{2}},
\end{eqnarray*}
which, combined with the definition of $\mathcal{A}_k$ in \eqref{meqnAk},
implies that, for all $f\in L^2(\rn)$ and almost every $x\in\rn$,
\begin{eqnarray*}
\mathcal{A}_k(F)(x)\le  C\lf[ \mathcal{A}_{k+1}(G)(x)\r]^{\frac{1}{2}}
\lf[\mathcal{A}_{k+1}(F)(x)\r]^{\frac{1}{2}},
\end{eqnarray*}
where $F:=(t^{2m}L)^2e^{-t^{2m}L}(f)$ and $G:=(t\nabla)^m t^{2m}Le^{-t^{2m}L}(f)$.
Moreover, since $f\in L^2(\rn)$, we know that $F$ and $G\in T^2(\rr^{n+1}_+)$.
Thus, by Lemma \ref{mlem 1}, we conclude that, for all $q\in (0,\,\fz)$ and $f\in L^2(\rn)$,
$\|F\|_{T^q(\rr^{n+1}_+)}\ls \|G\|_{T^q(\rr^{n+1}_+)}$, which implies that,
for all $q\in (0,\fz)$ and $f\in L^2(\rn)$,
\begin{equation}\label{3.31xx}
\|S_{L,\,2}(f)\|_{L^q(\rn)}\ls \|S_{h,\,L,\,1}(f)\|_{L^q(\rn)}.
\end{equation}
This, combined with \eqref{3.53x}, shows that Proposition \ref{pro MI}(i)
holds true for $k=2$.

Moreover, Proposition \ref{pro MI}(ii) when $k=2$
follows from \eqref{eqn 3.29}, \eqref{3.xx18}, \eqref{3.xxx18} and Proposition \ref{pro MI}(i) when $k=2$ .

Now, let $\wz k \in\nn\cap [3,\,\fz)$.
Assume that Proposition \ref{pro MI} holds true
for all $k\in\{1,\,\ldots,\,{\wz{k}}\}$. Thus, by mathematical induction,
to finish the proof of Proposition \ref{pro MI}, it suffices to show that
Proposition \ref{pro MI} also holds true for ${\wz{k}}+1$.

Similar to \eqref{eqn kI0-Im}, for all $0<\epsilon \ll R<\fz$, $f\in L^2(\rn)$
and $x\in\rn$, we have
\begin{eqnarray}\label{3}
\nonumber S^{\epsilon,\,R,\,1}_{L,\,{\wz{k}}+1}(f)(x)
&&=\lf[\iint_{\Gamma^{\epsilon,\,R,\,1}(x)}
\lf|\lf(t^{2m}{L}\r)^{{\wz{k}}+1}e^{-t^{2m}{L}}(f)(y)\r|^2
\frac{dy\,dt}{t^{n+1}}\r]^{\frac{1}{2}}\\
&&\nonumber\le \lf[\iint_{\Gamma^{\epsilon/2,\,2R,\,3/2}(x)}
\lf(t^{2m}{L}\r)^{{\wz{k}}+1}e^{-t^{2m}{L}}(f)(y)
\overline{\lf(t^{2m}{L}\r)^{{\wz{k}}+1}e^{-t^{2m}{L}}(f)(y)}\eta(y,\,t)\,
\frac{dy\,dt}{t^{n+1}}\r]^{\frac{1}{2}}\\
&&\nonumber= \lf|\dsum_{|\az|=m=|\bz|}
\iint_{\Gamma^{\epsilon/2,\,2R,\,3/2}(x)} a_{\az,\,\bz}(y)
t^{m}\pat^{\bz}\lf(\lf[t^{2m}L\r]^{{\wz{k}}}e^{-t^{2m}{L}}(f)\r)(y)\r.\\
&&\nonumber\hs\times
\overline{t^m\pat^{\az}\lf(\lf(t^{2m}{L}\r)^{{\wz{k}}+1}e^{-t^{2m}{L}}(f)\eta\r)(y,\,t)}
\,\frac{dy\,dt}{t^{n+1}}\Bigg|^{\frac{1}{2}}\\
&&\nonumber\ls \dsum_{l=0}^{m}\lf|\dsum_{|\az|=m=|\bz|}
\iint_{\Gamma^{\epsilon/2,\,2R,\,3/2}(x)} a_{\az,\,\bz}(y)
t^{m}\pat^{\bz}\lf(\lf[t^{2m}L\r]^{{\wz{k}}}e^{-t^{2m}{L}}(f)\r)(y)\r.\\
&&\nonumber\hs\times\lf.\lf[t^m \dsum_{|\wz\az|=l,\,\wz\az\le \az}
C_{(\az,\,\wz\az)}\overline{\pat^{\wz\az}\lf(\lf(t^{2m}{L}\r)^{{\wz{k}}+1}
e^{-t^{2m}{L}}(f)\r)(y)\, \pat^{\az-\wz\az}\eta(y,\,t)}\r]
\,\frac{dy\,dt}{t^{n+1}}\r|^{\frac{1}{2}}\\
&&=: \dsum_{l=0}^{m}\wz{\mathrm{I}}_l.
\end{eqnarray}

For $\wz{\mathrm{I}}_0$, by H\"older's inequality,
the size condition of $\eta$ and the Ellipticity condition $(\mathcal{E}_0)$,
we see that, for all $0<\epsilon \ll R<\fz$, $f\in L^2(\rn)$ and $x\in\rn$,
\begin{eqnarray}\label{4}
\nonumber\wz{\mathrm{I}}_0
&&\ls \lf\{\iint_{\Gamma^{\epsilon/2,\,2R,\,3/2}(x)}
\lf|t^{m}\nabla^{m}\lf(\lf[t^{2m}L\r]^{{\wz{k}}}
e^{-t^{2m}{L}}(f)\r)(y)\r|^2\frac{dy\,dt}
{t^{n+1}}\r\}^{\frac{1}{4}}\\
&&\nonumber\hs\times\lf\{\iint_{\Gamma^{\epsilon/2,\,2R,\,3/2}(x)}
\lf|\lf(t^{2m}{L}\r)^{{\wz{k}}+1}e^{-t^{2m}{L}}(f)(y)\r|^2
\frac{dy\,dt}{t^{n+1}}\r\}^{\frac{1}{4}}\\
&&\sim\lf[S^{\epsilon/2,\,2R,\,3/2}_{h,\,L,\,{\wz{k}}}(f)(x)\r]^{\frac{1}{2}}
\lf[S^{\epsilon/2,\,2R,\,3/2}_{L,\,{\wz{k}}+1}(f)(x)\r]^{\frac{1}{2}},
\end{eqnarray}
where $S^{\epsilon/2,\,2R,\,3/2}_{h,\,L,\,{\wz{k}}}(f)$ and
$S^{\epsilon/2,\,2R,\,3/2}_{L,\,{\wz{k}}+1}(f)$
are defined, respectively, similar to $S_{h,\,L,\,{\wz{k}}}(f)$
in \eqref{gradient square function for heat} and $S_{L,\,{\wz{k}}+1}(f)$ in \eqref{eqn KSF},
with $\bgz(x)$ in \eqref{1.x1} replaced by $\bgz^{\epsilon/2,\,2R,\,3/2}(x)$
in \eqref{contracted cone}.

For $\wz{\mathrm{I}}_m$, using an argument similar to that used in the proof of
\eqref{eqn estimates of ImK}, we conclude that, for all $0<\epsilon \ll R<\fz$, $f\in L^2(\rn)$ and $x\in\rn$,
\begin{eqnarray}\label{5}
\wz{\mathrm{I}}_m&&\ls \lf[S^{\epsilon/2,\,2R,\,3/2}_{h,\,L,\,{\wz{k}}}(f)(x)\r]^{\frac{1}{2}}
\lf[S^{\epsilon/2,\,2R,\,3/2}_{h,\,L,\,{\wz{k}+1}}(f)(x)\r]^{\frac{1}{2}}.
\end{eqnarray}
By an argument similar to that of \eqref{eqn 3.25},
we know that, for all $0<\epsilon \ll R<\fz$, $f\in L^2(\rn)$ and $x\in\rn$,
\begin{eqnarray*}
\lf[S^{\epsilon/2,\,2R,\,3/2}_{h,\,L,\,{\wz{k}}+1}(f)\r]^2\ls
\lf[S^{\epsilon/4,\,3R,\,2}_{L,\,{\wz{k}}+1}(f)(x)\r]^2.
\end{eqnarray*}
By this, together with letting $\epsilon\to 0$ and $R\to \fz$,
\eqref{3.xx18} and \eqref{3.xxx18},
we further conclude that, for all $q\in (0,\,\fz)$ and $f\in L^2(\rn)$,
\begin{eqnarray}\label{3.xx40}
\lf\|S_{h,\,L,\,\wz k+1}(f)\r\|_{L^q(\rn)}\ls \lf\|S_{L,\,\wz k+1}(f)\r\|_{L^q(\rn)}.
\end{eqnarray}

Also, similar to \eqref{3.xx30},
we know that, for all $l\in\{1,\,\ldots,\,m-1\}$, $0<\epsilon \ll R<\fz$, $f\in L^2(\rn)$ and $x\in\rn$,
\begin{eqnarray}\label{6}
\wz{\mathrm{I}}_l&&\le \lf[S^{\epsilon/2,\,2R,\,3/2}_{h,\,L,\,{\wz{k}}}(f)(x)\r]^{\frac{1}{2}}
\!\lf[S^{\epsilon/2,\,2R,\,3/2}_{h,\,L,\,{\wz{k}+1}}(f)(x)\r]^{l/(2m)}
\lf[S^{\epsilon/2,\,2R,\,3/2}_{L,\,{\wz{k}}+1}(f)(x)\r]^{(m-l)/(2m)}.
\end{eqnarray}

Thus, combining \eqref{3} through \eqref{6}, and
then letting $\epsilon\to 0$ and $R\to \fz$,
we conclude that, there exists a positive constant $C$ such that,
for all $f\in L^2(\rn)$ and almost every $x\in\rn$,
\begin{eqnarray}\label{meqn3.61}
S_{L,\,\wz k}(f)(x)\le C \lf[S^{2}_{h,\,L,\,\wz k-1}(f)(x)\r]^{\frac{1}{2}}
\lf[S^{2}_{L,\,\wz k}(f)(x)\r]^{\frac{1}{2}}.
\end{eqnarray}

Similarly, by following the same line of the proof of \eqref{meqn3.61},
we conclude that, for all $l\in\zz_+$, $f\in L^2(\rn)$ and
almost every $x\in\rn$,
\begin{eqnarray*}
S^{2^l}_{L,\,\wz k}(f)(x)\le  C\lf[ S_{h,\,L,\,\wz k-1}^{2^{l+1}}(f)(x)\r]^{\frac{1}{2}}
\lf[S^{2^{l+1}}_{L,\,\wz k}(f)(x)\r]^{\frac{1}{2}}.
\end{eqnarray*}
This, together with Lemma \ref{mlem 1} and
the assumption that Proposition \ref{pro MI} holds true for all
$k\in\{1,\,\ldots,\,{\wz{k}}\}$,
implies that Proposition \ref{pro MI}(i) also holds true in the case ${\wz{k}}+1$,
the details being omitted.

Finally, we see that  Proposition \ref{pro MI}(ii) in the case ${\wz{k}}+1$ follows from
\eqref{3.xx40}  and Proposition \ref{pro MI}(i) in the case ${\wz{k}}+1$,
which completes the proof of Proposition \ref{pro MI}.
\end{proof}

From the proof of Proposition \ref{pro MI}, we immediately deduce the following
conclusions.

\begin{corollary}\label{cor 3.9}
Let $k\in\nn$, $L$ be as in \eqref{L homo} and satisfy the Ellipticity condition
$(\mathcal{E}_0)$. Let $S_{L,\,k}$ and $S_{h,\,L,\,k}$ be the same, respectively, as
in \eqref{eqn KSF} and \eqref{gradient square function for heat}. Then, for all $p\in(0,\,p_+(L))$,
there exists  a positive constant $C_{(p,\,k)}$,
depending on $p$ and $k$, such that, for all $f\in L^2(\rn)$,
\begin{eqnarray}\label{3.x54}
\frac{1}{C_{(p,\,k)}}\lf\|S_{h,\,L,\,k}(f)\r\|_{L^p(\rn)}
\le \lf\|S_{L,\,k}(f)\r\|_{L^p(\rn)}
\le {C_{(p,\,k)}}\lf\|S_{h,\,L,\,k-1}(f)\r\|_{L^p(\rn)}.
\end{eqnarray}
\end{corollary}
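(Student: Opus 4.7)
The plan is to extract both inequalities directly from intermediate estimates already derived inside the proof of Proposition~\ref{pro MI}, rather than running any new argument; indeed the phrase ``From the proof of Proposition~\ref{pro MI}, we immediately deduce'' indicates that both bounds are by-products of that chain of estimates.

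For the left inequality $\|S_{h,\,L,\,k}(f)\|_{L^p(\rn)}\le C_{(p,\,k)}\|S_{L,\,k}(f)\|_{L^p(\rn)}$, I would simply point to \eqref{3.53x} in the case $k=1$ (obtained from the truncated-cone Caccioppoli estimate \eqref{estimates of IIm} after letting $\epsilon\to0$, $R\to\fz$, and invoking \eqref{3.xx18}, \eqref{3.xxx18}) and to \eqref{3.xx40} in the case $k\ge 2$, which is proved by exactly the same mechanism applied to $(t^{2m}L)^{k}e^{-t^{2m}L}(f)$ in place of $t^{2m}Le^{-t^{2m}L}(f)$. Both bounds hold on $L^q(\rn)$ for every $q\in(0,\fz)$, and in particular on $L^p(\rn)$ for $p\in(0,p_+(L))$.

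For the right inequality $\|S_{L,\,k}(f)\|_{L^p(\rn)}\le C_{(p,\,k)}\|S_{h,\,L,\,k-1}(f)\|_{L^p(\rn)}$, the case $k=1$ is exactly Proposition~\ref{domination of square functions}, since $S_{h,\,L,\,0}=S_{h,\,L}$. For $k\ge 2$, the proof of Proposition~\ref{pro MI} already establishes, via the decomposition \eqref{eqn kI0-Im}--\eqref{3.xx30} for $k=2$ and its $\wz k+1$ analogue \eqref{3}--\eqref{6}, the pointwise comparison (see \eqref{meqn3.60} and \eqref{meqn3.61})
\begin{eqnarray*}
S_{L,\,k}(f)(x)\le C\lf[S^{2}_{h,\,L,\,k-1}(f)(x)\r]^{\frac{1}{2}}\lf[S^{2}_{L,\,k}(f)(x)\r]^{\frac{1}{2}},
\end{eqnarray*}
and more generally, for every $l\in\zz_+$,
\begin{eqnarray*}
S^{2^{l}}_{L,\,k}(f)(x)\le C\lf[S^{2^{l+1}}_{h,\,L,\,k-1}(f)(x)\r]^{\frac{1}{2}}
\lf[S^{2^{l+1}}_{L,\,k}(f)(x)\r]^{\frac{1}{2}}.
\end{eqnarray*}
Setting $F:=(t^{2m}L)^{k}e^{-t^{2m}L}(f)$ and $G:=(t\nabla)^{m}(t^{2m}L)^{k-1}e^{-t^{2m}L}(f)$, these are precisely $\mathcal{A}_{l}(F)(x)\le C[\mathcal{A}_{l+1}(G)(x)]^{1/2}[\mathcal{A}_{l+1}(F)(x)]^{1/2}$ in the notation of \eqref{meqnAk}. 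Since $f\in L^{2}(\rn)$ gives $F,G\in T^{2}(\rr^{n+1}_{\zz})$, Lemma~\ref{mlem 1} yields $\|F\|_{T^{p}(\rr^{n+1}_{\zz})}\ls\|G\|_{T^{p}(\rr^{n+1}_{\zz})}$, which, by \eqref{meqnA}, is exactly $\|S_{L,\,k}(f)\|_{L^{p}(\rn)}\ls\|S_{h,\,L,\,k-1}(f)\|_{L^{p}(\rn)}$, as desired.

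No step presents a real obstacle: the delicate work (the truncated-cone cut-off, the Caccioppoli inequality of Proposition~\ref{caccioppoli's inequality2}, and the tent-space tail argument of Lemma~\ref{mlem 1}) has already been carried out in the proof of Proposition~\ref{pro MI}. The only bookkeeping item is to verify that the role of $p_+(L)$ in the range of $p$ enters solely through the identifications $S_{h,\,L,\,0}=S_{h,\,L}$ and $S_{L,\,1}=S_{L}$, so that Proposition~\ref{domination of square functions} (valid for all $p\in(0,\fz)$) can close the $k=1$ case and a short duality argument in the spirit of Proposition~\ref{cor HLP} covers $p\in(2,p_+(L))$.
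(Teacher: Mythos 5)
Your proposal is correct and follows essentially the same route as the paper: the left inequality is read off from \eqref{3.53x} and \eqref{3.xx40} (with $\wz k+1$ replaced by $k$), and the right inequality from the pointwise estimates \eqref{meqn3.60}/\eqref{meqn3.61} combined with Lemma \ref{mlem 1}, all of which are already established inside the proof of Proposition \ref{pro MI}. (The closing remark about needing a separate duality argument for $p\in(2,p_+(L))$ is superfluous, since Proposition \ref{domination of square functions} and Lemma \ref{mlem 1} already cover all $p\in(0,\fz)$.)
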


\begin{proof}
The first inequality of \eqref{3.x54} follows immediately from  \eqref{3.xx40}
with $\wz k+1$ replaced by $k$ in the proof of Proposition \ref{pro MI}, while the second
inequality of \eqref{3.x54} is proved in the proof of Proposition \ref{pro MI},
which completes the proof of Corollary \ref{cor 3.9}.
\end{proof}

Combining Corollary \ref{cor 3.9} and Lemma \ref{lem 3.3}(i),
we immediately obtain the following corollary, which improves
Lemma \ref{lem 3.3}(ii) by extending the range of $q$ from $(q_-(L),\, q_+(L))$
to $(p_-(L),\, p_+(L))$.

\begin{corollary}\label{cor3.3}
Let $L$ be as in \eqref{L homo} and satisfy the Ellipticity condition
$(\mathcal{E}_0)$. Let $k\in\nn$, $q\in (p_-(L),\, p_+(L))$ and
$S_{h,\,L,\,k}$ be as in \eqref{gradient square function for heat}.
Then there exists a positive constant $C$ such that, for all
$f\in L^2(\rn)\cap L^q(\rn)$,
\begin{eqnarray*}
\lf\|S_{h,\,L,\,k} (f)\r\|_{L^q(\rn)}\le C\|f\|_{L^q(\rn)}.
\end{eqnarray*}
\end{corollary}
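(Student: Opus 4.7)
The plan is to deduce Corollary \ref{cor3.3} by simply chaining together two facts already established in the paper. The first inequality in Corollary \ref{cor 3.9} states that, for every $p\in(0,p_+(L))$, there is a constant $C_{(p,k)}$ with
\begin{eqnarray*}
\|S_{h,L,k}(f)\|_{L^p(\rn)}\le C_{(p,k)}\|S_{L,k}(f)\|_{L^p(\rn)}
\end{eqnarray*}
for all $f\in L^2(\rn)$. Specializing $p=q\in(p_-(L),p_+(L))$, the right-hand side is in turn controlled by Lemma \ref{lem 3.3}(i), which asserts that $S_{L,k}$ is bounded on $L^q(\rn)$ for every $k\in\nn$ and $q\in(p_-(L),p_+(L))$. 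Concatenating the two estimates yields
\begin{eqnarray*}
\|S_{h,L,k}(f)\|_{L^q(\rn)}\le C_{(q,k)}\|S_{L,k}(f)\|_{L^q(\rn)}\le C\|f\|_{L^q(\rn)}
\end{eqnarray*}
for all $f\in L^2(\rn)\cap L^q(\rn)$, which is the desired inequality.

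There is essentially no obstacle here; all of the work has already been absorbed into Corollary \ref{cor 3.9} (whose proof, in turn, rests on the parabolic Caccioppoli inequality of Proposition \ref{caccioppoli's inequality2}, an interpolation argument of the type used to bound the terms $\mathrm{I}_l$ in the proof of Proposition \ref{pro MI}, and Lemma \ref{mlem 1} to absorb the factor involving $S_{L,k}$ via an iterative tent-space argument) and into Lemma \ref{lem 3.3}(i) (which invokes \cite[Theorem 2.13]{bckyy13} through the $m$-$L^p$-$L^q$ off-diagonal bounds of Proposition \ref{pro Lp semiL0}(iii)). The point worth highlighting is that the range of $q$ is $(p_-(L),p_+(L))$, strictly larger than the range $(q_-(L),q_+(L))$ in Lemma \ref{lem 3.3}(ii); this improvement is exactly what one gains by passing through $S_{L,k}$ rather than attempting a direct off-diagonal argument for the gradient semigroup $\{(t\nabla)^m(t^{2m}L)^ke^{-t^{2m}L}\}_{t>0}$, since the bounds for the latter are only known on $(q_-(L),q_+(L))$ by Proposition \ref{pro Lp semiL0}(iv).

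Thus the proof will be a three-line deduction: invoke Corollary \ref{cor 3.9}, then Lemma \ref{lem 3.3}(i), then conclude.
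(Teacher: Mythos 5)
Your proof is correct and is exactly the paper's own argument: the paper derives Corollary \ref{cor3.3} precisely by combining the first inequality of Corollary \ref{cor 3.9} (valid for all $p\in(0,p_+(L))$, hence for $q\in(p_-(L),p_+(L))$) with the $L^q(\rn)$ boundedness of $S_{L,\,k}$ from Lemma \ref{lem 3.3}(i). Your remark about why this route enlarges the range from $(q_-(L),q_+(L))$ to $(p_-(L),p_+(L))$ is also the point the paper itself makes.
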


We are now in a position to prove Theorem \ref{thm SFLAF Hardy}.

\begin{proof}[Proof of Theorem \ref{thm SFLAF Hardy}]
We first prove Theorem \ref{thm SFLAF Hardy}(i).
If $p\in(0,\,2]$, by Proposition \ref{pro GSFC}, we know that
$H_{S_{L,\,k}}^p(\rn)= H_L^p(\rn)$. Thus, to finish the proof of
Theorem \ref{thm SFLAF Hardy}(i), it suffices to consider the
case $p\in(2,\,p_+(L))$.

Moreover, by Lemma \ref{lem 3.3}(i) and a density argument, we see that,
for all $p\in(2,\,p_+(L))$,
\begin{eqnarray*}
H_L^p(\rn)\subset H_{S_{L,\,k}}^p(\rn).
\end{eqnarray*}

On the other hand, for all $p\in (2,\,p_+(L))$,
by an argument similar to that used in the proof of \eqref{eqn DA}, we know that,
for all $k\in\nn$ and $f\in L^2(\rn)$,
\begin{eqnarray*}
\lf\|f\r\|_{L^p(\rn)}\ls \|S_{L,\,k}(f)\|_{L^p(\rn)},
\end{eqnarray*}
which, combined with Lemma \ref{lem relation Hardy Lebesgue L0} and
 a density argument, implies that, for all $p\in(2,\,p_+(L))$,
\begin{eqnarray*}
H_{S_{L,\,k}}^p(\rn)\subset H_L^p(\rn).
\end{eqnarray*}
This shows that Theorem \ref{thm SFLAF Hardy}(i) holds true.

We now prove Theorem \ref{thm SFLAF Hardy}(ii).
To show the inclusion that $H_{S_{h,\,L,\,k}}^p(\rn)\subset H_L^p(\rn)$, for all
$p\in(0,\,p_+(L))$, by Theorem \ref{thm SFLAF Hardy}(i) and Corollary \ref{cor 3.9},
we conclude that, for all $f\in L^2(\rn)$,
\begin{eqnarray*}
\lf\|f\r\|_{H_L^p(\rn)}\sim \lf\|S_{L,\,k+1}(f)\r\|_{L^p(\rn)}\ls
\lf\|S_{h,\,L,\,k}(f)\r\|_{L^p(\rn)},
\end{eqnarray*}
which, together with a density argument, implies that
$H_{S_{h,\,L,\,k}}^p(\rn)\subset H_L^p(\rn)$.

For the converse inclusion, if $p\in(p_-(L),\,p_+(L))$, by
Propositions \ref{pro MI}(ii) and \ref{cor HLP}, we see that, for all $f\in L^2(\rn)\cap L^p(\rn)$,
\begin{eqnarray*}
\|S_{h,\,L,\,k}(f)\|_{L^p(\rn)}\ls\|S_{L}(f)\|_{L^p(\rn)}\sim \|f\|_{L^p(\rn)},
\end{eqnarray*}
which, combined with Lemma \ref{lem relation Hardy Lebesgue L0}
and a density argument, implies that
$H_L^p(\rn)\subset H_{S_{h,\,L,\,k}}^p(\rn)$
holds true in the range $p\in(p_-(L),\,p_+(L))$.

If $p\in(0,\,1]$, by considering the action of $S_{h,\,L,\,k}$
on each $(p,\,2,\,M,\,\epsilon)_{L}$-molecule (see, for
example, \cite[(4.4)]{cy12} for a proof of a similar conclusion)
and Theorem \ref{molecular characterization for Hardy space L0},
we know that, for all $p\in(0,\,1]$ and $f\in H_L^p(\rn)$,
\begin{eqnarray*}
\lf\|S_{h,\,L,\,k} (f)\r\|_{L^p(\rn)}\ls
\lf\|f\r\|_{L^p(\rn)},
\end{eqnarray*}
which, together with the conclusion in the case $p \in(p_-(L),\,p_+(L))$,
Lemma \ref{lem relation Hardy Lebesgue L0}, the interpolation {(see
Proposition \ref{pro interpolation Hardy L0} together with \cite[p.\,52, Theorem]{Jan})}
and a density argument, implies that
$H_L^p(\rn)\subset H_{S_{h,\,L,\,k}}^p(\rn)$ holds true for all $p\in(0,\,q_+(L))$.
This finishes the proof of Theorem \ref{thm SFLAF Hardy}.
\end{proof}

\noindent{\bf Acknowledgements.} The authors would like to thank
the referee for his very carefully reading and many stimulating remarks
which do improve the presentation
of this article.

\bigskip

\noindent Jun Cao and Dachun Yang (Corresponding author)

\medskip

\noindent School of Mathematical Sciences, Beijing Normal
University, Laboratory of Mathematics and Complex Systems, Ministry
of Education, Beijing 100875, People's Republic of China

\smallskip

\noindent{\it E-mails:} \texttt{caojun1860@mail.bnu.edu.cn} (J. Cao)

\hspace{0.96cm}\texttt{dcyang@bnu.edu.cn} (D. Yang)

\bigskip

\noindent Svitlana Mayboroda

\medskip

\noindent School of Mathematics, University of Minnesota, Minneapolis, MN,
55455 USA

\smallskip

\noindent{\it E-mail:} \texttt{svitlana@math.umn.edu}


\begin{thebibliography}{99}

\bibitem{af03} R. A. Adams and J. J. F. Fournier,  Sobolev Spaces,
Second edition, Pure and Applied Mathematics (Amsterdam), 140,
Elsevier/Academic Press, Amsterdam, 2003, xiv+305 pp.

\vspace{-0.25cm}

\bibitem{au07} P. Auscher, On necessary and sufficient conditions for
$L^p$-estimates of Riesz transforms associated to elliptic operators
on $\rn$ and related estimates, Mem. Amer. Math. Soc. 186 (2007),
no. 871, xviii+75 pp.

\vspace{-0.25cm}

\bibitem{Au11}
P. Auscher,
Change of angle in tent spaces,
C. R. Math. Acad. Sci. Paris 349 (2011), 297-301.

\vspace{-0.25cm}

\bibitem{ahmt01} P. Auscher, S. Hofmann, A. McIntosh and Ph.
Tchamitchian, The Kato square root problem for higher order elliptic
operators and systems on $\mathbb R^n$, J. Evol. Equ. 1 (2001),
361-385.

\vspace{-0.25cm}

\bibitem{aq00} P. Auscher and M. Qafsaoui, Equivalence between regularity theorems
and heat kernel estimates for higher order elliptic operators and systems under
divergence form, J. Funct. Anal. 177 (2000), 310-364.

\vspace{-0.25cm}

\bibitem{AT98}
P.  Auscher and P. Tchamitchian, Square root problem for divergence operators
and related topics, Ast\'erisque No. 249 (1998), viii+172 pp.

\vspace{-0.25cm}

\bibitem{ba14}
A. Barton, Gradient estimates and the fundamental solution for higher-order elliptic systems with rough coefficients,
arXiv: 1409.7600.

\vspace{-0.25cm}

\bibitem{bk03}
S. Blunck and P. C. Kunstmann,
Calder\'on-Zygmund theory for non-integral operators and the $H^\infty$ functional calculus.
Rev. Mat. Iberoamericana 19 (2003), 919-942.

\vspace{-0.25cm}

\bibitem{bk04}
S. Blunck and P. C. Kunstmann,  Weak type $(p,\,p)$
estimates for Riesz transforms, Math. Z. 247 (2004), 137-148.

\vspace{-0.25cm}

\bibitem{bckyy13} T. A. Bui, J. Cao, L. D. Ky, D. Yang and S. Yang,
Musielak-Orlicz-Hardy spaces associated with operators satisfying reinforced off-diagonal
estimates, Anal. Geom. Metr. Spaces 1 (2013),  69-129.


\vspace{-0.25cm}

\bibitem{bgs71}
D. L. Burkholder, R. F. Gundy and M. L. Silverstein,
A maximal function characterization of the class $H^p$,
Trans. Amer. Math. Soc.  (157) 1971 137-153.

\vspace{-0.25cm}

\bibitem{ccyy13}
J. Cao, D.-C. Chang, D. Yang and S. Yang,
Boundedness of generalized Riesz transforms on Orlicz-Hardy spaces associated to operators,
Integral Equations Operator Theory 76 (2013), 225-283.

\vspace{-0.25cm}

\bibitem{ccyy} J. Cao, D.-C. Chang, D. Yang and S. Yang,
Weighted local Orlicz-Hardy spaces on domains and their applications in
inhomogeneous Dirichlet and Neumann problems,
Trans. Amer. Math. Soc. 365 (2013), 4729-4809.

\vspace{-0.25cm}

\bibitem{cmy} J. Cao, S. Mayboroda and D. Yang, Local Hardy spaces associated
with inhomogeneous higher order elliptic operators, Anal. Appl. (Singap.) (to appear).

\vspace{-0.25cm}

\bibitem{cy12} J. Cao and D. Yang, Hardy spaces $H_L^p(\rn)$ associated to
operators satisfying $k$-Davies-Gaffney estimates,
Sci. China Math. 55 (2012), 1403-1440.

\vspace{-0.25cm}

\bibitem{cds99} D.-C. Chang, G. Dafni and E. M. Stein,  Hardy spaces,
$\mathop\mathrm{BMO}$ and boundary value problems for the Laplacian
on a smooth domain in $\rn$, Trans. Amer. Math. Soc. 351 (1999),
1605-1661.

\vspace{-0.25cm}

\bibitem{cks92} D.-C. Chang, S. G. Krantz and E. M. Stein,
Hardy spaces and elliptic boundary value problems, in:  The Madison
Symposium on Complex Analysis (Madison, WI, 1991), 119-131,
Contemp. Math., 137, Amer. Math. Soc., Providence, RI, 1992.

\vspace{-0.25cm}

\bibitem{cks93} D.-C. Chang, S. G. Krantz and E. M. Stein,
$H^p$ theory on a smooth domain in $\rr^N$ and elliptic boundary
value problems, J. Funct. Anal. 114 (1993), 286-347.

\vspace{-0.25cm}

\bibitem{clms} R. R. Coifman, P.-L. Lions, Y. Meyer
and S. Semmes, Compensated compactness and Hardy spaces, J.
Math. Pures Appl. (9) 72 (1993), 247-286.

\vspace{-0.25cm}

\bibitem{cms85} R. R. Coifman, Y. Meyer and E. M. Stein, Some new
function spaces and their applications to harmonic analysis, J.
Funct. Anal. 62 (1985), 304-335.

\vspace{-0.25cm}

\bibitem{ddy12}
Q. Deng, Y. Ding and X. Yao,  Characterizations of Hardy spaces
associated to higher order elliptic operators, J. Funct. Anal. 263 (2012),
604-674.

\vspace{-0.25cm}

\bibitem{dhmmy} X. T. Duong, S. Hofmann, D. Mitrea, M. Mitrea and L. Yan,
Hardy spaces and regularity for the inhomogeneous Dirichlet and Neumann
problems, Rev. Mat. Iberoam. 29 (2013), 183-236.

\vspace{-0.25cm}

\bibitem{DL13}
X. T. Duong and J. Li, Hardy spaces associated to operators satisfying Davies-Gaffney
estimates and bounded holomorphic functional calculus,
J. Funct. Anal. 264 (2013), 1409-1437.

\vspace{-0.25cm}

\bibitem{dxy} X. T. Duong, J. Xiao and L. Yan, Old and new Morrey spaces
with heat kernel bounds, J. Fourier Anal. Appl. 13 (2007), 87-111.

\vspace{-0.25cm}

\bibitem{dy05-1} X. T. Duong and L. Yan, New function
spaces of $\mathrm{BMO}$ type, the John-Nirenberg inequality,
interpolation, and applications, Comm. Pure Appl. Math. 58 (2005),
1375-1420.

\vspace{-0.25cm}

\bibitem{fs72} C. Fefferman and E. M. Stein, $H^{p}$ spaces of several
variables, Acta Math. 129 (1972),  137-193.

\vspace{-0.25cm}

\bibitem{ha06} M. Haase, The Functional Calculus for Sectorial Operators,
Operator Theory: Advances and Applications, 169. Birkh\"auser Verlag,
Basel, 2006, xiv+392 pp.

\vspace{-0.25cm}

\bibitem{hm09} S. Hofmann and S. Mayboroda,
Hardy and BMO spaces associated to divergence form elliptic
operators, Math. Ann. 344 (2009), 37-116.

\vspace{-0.25cm}

\bibitem{hmm} S. Hofmann, S. Mayboroda and A. McIntosh, Second
order elliptic operators with complex bounded measurable
coefficients in $L^p$, Sobolev and Hardy spaces, Ann. Sci. \'Ec.
Norm. Sup\'er. (4) 44 (2011), 723-800.

\vspace{-0.25cm}

\bibitem{Jan}
S. Janson,  On the interpolation of sublinear operators, Studia Math. 75 (1982), 51-53.

\vspace{-0.25cm}

\bibitem{jy10} R. Jiang and D. Yang, New Orlicz-Hardy spaces associated
with divergence form elliptic operators, J. Funct. Anal. 258
(2010), 1167-1224.

\vspace{-0.25cm}

\bibitem{jy11} R. Jiang and D. Yang, Orlicz-Hardy spaces associated
with operators satisfying Davies-Gaffney estimates, Commun.
Contemp. Math. 13 (2011), 331-373.

\vspace{-0.25cm}

\bibitem{KMM07}
N. Kalton, S. Mayboroda and M. Mitrea,
Interpolation of Hardy-Sobolev-Besov-Triebel-Lizorkin spaces and
applications to problems in partial differential equations,
in:  Interpolation Theory and Applications,  121-177, Contemp. Math., 445,
Amer. Math. Soc., Providence, RI, 2007.

\vspace{-0.25cm}

\bibitem{mm04} S. Mayboroda and M. Mitrea, Sharp estimates for
Green potentials on non-smooth domains, Math. Res. Lett. 11 (2004),
481-492.

\vspace{-0.25cm}

\bibitem{Mo08}
C. B. Jr. Morrey, Multiple Integrals in the Calculus of Variations,
Classics in Mathematics, Springer-Verlag, Berlin, 2008, x+506 pp.

\vspace{-0.25cm}

\bibitem{ou05} E. M. Ouhabaz, Analysis of Heat Equations on Domains,
Princeton, NJ: Princeton University Press, 2005.

\end{thebibliography}
\end{document}